\newcommand{\M}{\ensuremath{\mathrm{M}}}
\newcommand{\Sp}{\ensuremath{\mathrm{Sp}}}
\newcommand{\SL}{\ensuremath{\mathrm{SL}}}
\newcommand{\GL}{\ensuremath{\mathrm{GL}}}
\newcommand{\mz}{\ensuremath{\mathbb Z}}
\newcommand{\mr}{\ensuremath{\mathbb R}}
\newcommand{\mh}{\ensuremath{\mathbb H}}
\newcommand{\mq}{\ensuremath{\mathbb Q}}
\newcommand{\mc}{\ensuremath{\mathbb C}}
\newcommand{\shortmod}{\ensuremath{\negthickspace \negthickspace \negthickspace \pmod}}
\newcommand{\intR}{\int_{-\infty}^{\infty}}
\newcommand{\sumstar}{\sideset{}{^*}\sum}
\newcommand{\sumdag}{\sideset{}{^\dagger}\sum}
\theoremstyle{plain}		
	\newtheorem{mytheo}{Theorem}[section]
	\newtheorem{myprop}[mytheo]{Proposition}
	\newtheorem{mycoro}[mytheo]{Corollary}
     \newtheorem{mylemma}[mytheo]{Lemma}
\theoremstyle{remark}
\numberwithin{equation}{section}
\begin{document}
\author{Wing Hong Leung}
 \email{joseph.leung@rutgers.edu}
 \author{Matthew P. Young} 
  \email{mpy4@rutgers.edu}
 \address{Department of Mathematics \\
 	  Rutgers University \\
 	  Piscataway \\
 	  NJ 08854 \\
 		U.S.A.}		
 \thanks{This material is based upon work supported by the National Science Foundation under agreement No. DMS-2302210 (M.Y.).  Any opinions, findings and conclusions or recommendations expressed in this material are those of the authors and do not necessarily reflect the views of the National Science Foundation.}

 \begin{abstract}
 We provide a power-saving bound for certain smoothed shifted convolution sums for Fourier coefficients of Siegel cusp forms.  
 This result is the first nontrivial estimate for a shifted convolution sum with two cusp forms on a group of higher rank than $\GL_2$.
 Our approach is based on a novel automorphic reinterpretation of the delta method of Duke, Friedlander, and Iwaniec.  The method reduces the problem to the estimation of Fourier coefficients of Siegel Poincare series, which is ultimately based on the Weil bound.
\end{abstract}
%\today
\title[The shifted convolution problem for Siegel modular forms]{The shifted convolution problem for Fourier coefficients of Siegel modular forms of degree $2$}
\maketitle

\section{Introduction}
\subsection{Motivation}
The shifted convolution problem for a sequence $a_n$ is to prove cancellation (or develop a main term with a good error term) in sums of the form $\sum_{n \leq X} a_n a_{n+h}$, where $h \neq 0$.  Such sums arise in several contexts.  For instance, a $t$-aspect second moment of an $L$-function directly leads to such sums.  In this case, the shift $h$ should be small compared to $X$.  In a different, but closely related, direction, the second moment of Dirichlet twists of prime conductor $p$ of an $L$-function gives rise to such sums where $h \equiv 0 \pmod{p}$.  More generally, the van der Corput method reduces cancellation in the linear sum $\sum_n a_n$ to that in the correlated sums $\sum_n a_n a_{n+h}$ (see \cite[Lemma 8.17]{IwaniecKowalski}, for instance, for such a result).

One of the most intensively-studied shifted convolution problems is that generated by the ordinary divisor function, $\sum_{n \leq X} d(n) d(n+h)$.  The progress in understanding this sum is responsible for the best error term in the fourth moment of the Riemann zeta function.  See \cite{MotohashiBinary} for a comprehensive discussion of this problem.
A cusp form analog of this problem is to prove cancellation in $\sum_{n \leq X} \lambda(n) \lambda(n+h)$, where $\lambda(n)$ are the Fourier coefficients of a Hecke cusp form on $\GL_2$.  
There are various methods of solving this well-studied problem;
for instance,
see \cite{BlomerHarcos} for some advanced techniques utilizing the spectral theory of automorphic forms.  We also refer to \cite{Michel} for more discussions on the connections between the subconvexity problem for $L$-functions in families and the shifted convolution problem.

There are some solutions to higher rank variants of the shifted convolution problem for two different cusp forms, one on $\GL_m$ and one on $\GL_2$, as in \cite{Pitt, Munshi}, for $m=3$.
Nevertheless,
the shifted convolution problem has not been solved for two cusp forms on $\GL_m$ with $m \geq 3$, and this remains a major barrier for progress on moment problems for higher degree $L$-functions, including the sixth moment of the Riemann zeta function.
The main result in this paper (Theorem \ref{thm:mainthm} below) gives the first (unconditional) higher rank shifted convolution sum bound for two holomorphic cusp forms on $\Sp_4(\mz)$.  In Section \ref{section:statement} we give a precise statement of our result, and in Section \ref{section:broadoverview}, we give an overview of the ideas used in the proof.

\subsection{Statement of result}
\label{section:statement}
Let $F$ be a holomorphic Siegel cusp form of weight $k$ on $\Gamma = \mathrm{Sp}_4(\mz)$, and 
let $\mh_2 = \{Z=X+iY \in \mathrm{M}_2(\mc) : Z^t =  Z,  Y > 0 \}$.
The group $\mathrm{Sp}_4(\mr)$ acts on $\mh_2$ as follows.  Let $\gamma = (\begin{smallmatrix} A & B \\ C & D \end{smallmatrix}) \in \mathrm{Sp}_4(\mr)$, and for $Z \in \mh_2$ let $\gamma Z = \gamma(Z) = (AZ+B)(CZ+D)^{-1}$.  The automorphy condition of $F$ is that $F(\gamma Z) = \det(CZ+D)^k F(Z)$ for all $\gamma \in \Gamma = \mathrm{Sp}_4(\mz)$.
The Fourier expansion for $F$ takes the form
\begin{equation}
\label{eq:SiegelFourierExpansion}
F(Z) = \sum_{M \in \Lambda^{+}} a_F(M) e(\mathrm{Tr}(MZ)),
\end{equation}
where
$\Lambda = \{ (\begin{smallmatrix} a & b/2 \\ b/2 & c \end{smallmatrix}) : a,b,c \in \mz \}$
and where $\Lambda^{+} = \{ M \in \Lambda: M > 0 \}$.
Let $\widetilde{a_F}(M) = \frac{a_F(M)}{\det(M)^{\frac{k}{2} - \frac{3}{4}}}$, which are well-normalized Fourier coefficients.
Indeed, if $\det(T)$ is a fundamental discriminant, then a conjecture of Resnikoff and Salda\~{n}a \cite{Resnikoff1974} predicts that $|\widetilde{a_F}(T)| \ll |\det T|^{\varepsilon}$.   Although this conjecture is wide open, and apparently goes beyond the generalized Lindel\"{o}f Hypothesis, it is known in an average sense for which 
see Section \ref{section:RankinSelbergBound} below.  

To state our main theorem, 
we need to define a certain parameterized family of Poincare series. 
Let $\varphi: \mr \times \mr_{>0} \times \mr_{>0} \rightarrow \mc$ be smooth of compact support.
For each real $N \geq 1$, 
let $\phi = \phi_N$ be the smooth function on $\mathrm{GL}_2^{+}(\mr)/\mathrm{SO}_2(\mr)$ of the form
\begin{equation}
\label{eq:phiNdef}
    \phi\Big( \begin{pmatrix} 1 & u \\ & 1 \end{pmatrix} \begin{pmatrix} \sqrt{r_1} & \\ & \sqrt{r_2} \end{pmatrix} k \Big)
    = \varphi(u, N r_1, N r_2),
\end{equation}
where $u \in \mr$, $r_1, r_2 > 0$, and $k \in \mathrm{SO}_2(\mr)$.  Let $\iota: \mathrm{GL}_2(\mr)^{+}/\mathrm{SO}_2(\mr) \xrightarrow{\sim} \mathrm{M}_2(\mr)^{\mathrm{sym}, >0}$ be defined by $\iota(g) = g g^t$, which is a smooth bijection.
Let $Q \in \Lambda$, and define the Poincare series
\begin{equation}
\label{eq:PoincareSeriesDefinition}
P_{Q}(Z, \phi) =
\sum_{\gamma \in \Gamma_{\infty} \backslash \Gamma} 
e(\mathrm{Tr}(Q \mathrm{Re}(\gamma Z))) \phi(\iota^{-1} \mathrm{Im}(\gamma Z)),
\end{equation}
where $\Gamma_{\infty} = \{ (\begin{smallmatrix} I & X \\ & I \end{smallmatrix}) \in \Gamma \}$.  
The Petersson inner product on $S_k(\mathrm{Sp}_4(\mz))$ (the space of Siegel cusp forms of weight $k$) is given by
\begin{equation}
\label{eq:PeterssonInnerProductDefinitionSiegel}
\langle F, G \rangle 
= \int_{\Gamma \backslash \mh_2} F(Z) \overline{G}(Z) (\det Y)^k d\mu,
\qquad
d\mu = (\det Y)^{-3} dX dY.
\end{equation}
Here $dX=dx_1dx_2dx_3$ for $X=(\begin{smallmatrix}
    x_1 & x_2\\ x_2 & x_3
\end{smallmatrix})$ is the standard Lebesgue measure, and similarly for $dY$. By a standard unfolding calculation, we have
\begin{equation*}
    \langle F P_Q(\cdot, \phi), F \rangle = \sum_{\substack{M_1, M_2 \in \Lambda^{+} \\ M_1 + Q = M_2}}
    \widetilde{a_F}(M_1) \overline{\widetilde{a_F}(M_2)} W_{N}(M_1, M_2, \varphi),
\end{equation*}
where $W_{N}$ is  the Laplace transform of $\phi(Y)\det(Y)^{k-3}$ on $\mathrm{M}_2(\mr)^{\mathrm{sym}, >0}$ (e.g. see \cite[Sect. 7.2.3]{muirhead1982aspects} for the definition of the Laplace transform on this space). Some analysis (see Proposition \ref{prop:PoincareSeriesIntegralEstimates} below) shows that for fixed $Q$, $W_{N}$ is approximately supported on (symmetric) matrices $M_1$ and $M_2$ with entries of size $\ll N$. Hence, the ``trivial" bound on this inner product is $|\langle F P_Q(\cdot, \phi), F \rangle| \ll_{Q,F, \varepsilon} N^{3+\varepsilon}$.  The main theorem of this article is the following.
\begin{mytheo}
\label{thm:mainthm}
Let notation be as above.  Suppose $Q \neq 0$.  Then as $N \rightarrow \infty$,
\begin{equation}
    |\langle F P_Q(\cdot, \phi), F \rangle| \ll_{Q,F, \varphi, \varepsilon} N^{5/2+\varepsilon}.
\end{equation}   
\end{mytheo}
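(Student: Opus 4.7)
My plan is to bound $\langle FP_Q, F\rangle$ via an automorphic analogue of the Duke--Friedlander--Iwaniec delta method. The unfolding identity stated in the setup already expresses this inner product as the shifted convolution sum, whose trivial bound is $N^{3+\varepsilon}$; the $N^{1/2}$ improvement has to come from arithmetic oscillation, which I plan to access through the Weil bound on Siegel Kloosterman sums.

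The heart of the argument is to expand the Poincare series $P_Q(Z, \phi)$ according to the Bruhat decomposition $\Gamma_\infty \backslash \Gamma / \Gamma_\infty$ of $\mathrm{Sp}_4(\mz)$. In this viewpoint, $P_Q$ itself plays the role of a delta function for the constraint $M_1 + Q - M_2 = 0$, and each Bruhat cell contributes an additive character in $X$ times a Siegel Kloosterman sum $K(Q, T; c)$ of modulus $c$, multiplied by an archimedean transform of $\phi$ --- the latter playing the same role as the smooth weight function in the DFI method. Substituting this expansion into $\langle FP_Q, F\rangle$ and using the Fourier expansion of $|F|^2$ in $X$, the $X$-integration enforces a linear relation $M_1 - M_2 + T = 0$ between the Fourier indices and the cell parameter $T$. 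Cauchy--Schwarz in $(M_1, M_2)$ then separates (i) a bilinear form in $\widetilde{a_F}(M_1)\overline{\widetilde{a_F}(M_2)}$, estimated on average by the Rankin--Selberg bound of Section \ref{section:RankinSelbergBound}, from (ii) the Kloosterman-sum factor, controlled by a Weil-type bound $|K(Q, T; c)| \ll c^{1/2+\varepsilon}$. Since the effective Kloosterman modulus is calibrated (by the archimedean support of $\phi$) so that $\sqrt{c}$ saves a factor of $\sqrt{N}$ against the trivial count, one obtains $\langle FP_Q, F\rangle \ll N^{5/2+\varepsilon}$.

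The main obstacle will be the Bruhat-cell analysis for $\mathrm{Sp}_4$. The Weyl group has eight elements, yielding several cells whose Siegel Kloosterman sums do not always factor into products of classical Kloosterman sums; the long-element and middle-cell Kloosterman sums are more intricate than their $\mathrm{GL}_2$ counterparts. Verifying a Weil-strength bound $c^{1/2+\varepsilon}$ in every cell, and controlling the associated archimedean integrals so that no cell exceeds the $N^{5/2+\varepsilon}$ target, will be the bulk of the technical work. As in the classical DFI method, choosing the archimedean cutoffs in $\phi$ and handling the overlap between adjacent moduli $c$ may require additional care.
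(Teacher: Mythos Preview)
Your proposal has the right arithmetic ingredients (Kitaoka's Fourier expansion of $P_Q$, symplectic Kloosterman sums, the Rankin--Selberg bound) but there is a genuine gap at the very first step: you never explain how to pass from the fundamental-domain integral $\langle F P_Q, F\rangle = \int_{\Gamma \backslash \mh_2} |F|^2 P_Q \, d\mu$ to an integral over the strip $X \in (\mr/\mz)^3$, $Y > 0$ in which the Fourier expansion of $P_Q$ can be legitimately inserted. If you unfold with $P_Q$ itself, you land back on the shifted sum and the trivial $N^{3+\varepsilon}$ bound. If instead you try to substitute the Bruhat/Kitaoka expansion of $P_Q$ directly into the $\Gamma \backslash \mh_2$ integral, the double-coset sum is not a Fourier expansion on the fundamental domain, and the step ``$X$-integration enforces $M_1 - M_2 + T = 0$'' has no meaning there.

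The paper's mechanism for this is Theorem~\ref{thm:Einc=1}: one constructs an incomplete minimal-parabolic Eisenstein series $E_0^*(g)$ on $\mathrm{Sp}_4$ that is identically $1$, inserts it into the inner product, and unfolds with \emph{it} rather than with $P_Q$ (Proposition~\ref{prop:ShiftedSumUnfoldedViaEisenstein}). This is the automorphic DFI step you allude to, and it is what transfers all $N$-dependence into $a_Q(T,Y,\phi)$ while leaving $M_1, M_2, T$ of bounded size rather than size $N$. Once this is in place, the rank-$0,1,2$ decomposition of $a_Q(T,Y,\phi)$, Kitaoka's bound, and the counting result Theorem~\ref{thm.Bounding(t,C)} give $N^{5/2+\varepsilon}$. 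Two further corrections: the Kloosterman sums are indexed by matrices $C \in \mathrm{M}_2(\mz)$ (not scalars $c$), with Kitaoka's bound $K(Q,T;C) \ll \alpha_1^2 \alpha_2^{1/2+\varepsilon}(\alpha_2,t)^{1/2}$ in terms of the invariant factors $\alpha_1 \mid \alpha_2$ of $C$; and controlling the gcd factor $(\alpha_2,t)^{1/2}$ when summing over $C$ requires substantial combinatorial work (Section~\ref{section:countingproblems}) that your phrase ``verifying a Weil-strength bound in every cell'' does not anticipate.
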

In particular, Theorem \ref{thm:mainthm} gives a power-saving bound for the shifted convolution sum whose test function arises from this Poincare series.

J\"{a}\"{a}saari, Lester, and Saha have obtained a QUE result for holomorphic Siegel cusp forms on $\Sp_4(\mz)$ in the large weight limit, conditional on GRH.  Their result relies on bounds for shifted sums, but they apply the triangle inequality and give up cancellation in the shifted convolution problem, so their method is very different from ours.  Another point of contrast is that they consider the difficult case where the underlying cusp form $F$ varies, whereas $F$ in Theorem \ref{thm:mainthm} is fixed.

\subsection{Broad overview of the proof}
\label{section:broadoverview}
The first key step in the proof is a novel reinterpretation of the DFI delta method in terms of automorphic forms.  To elaborate on this, we  recall the construction of \cite{DFI}.  Their idea is based on the simple observation that every nonzero integer has its divisors appearing in pairs, while the integer $0$ is divisible by every integer.  Let $\omega: \mr \rightarrow \mc$ be a smooth, even, function with compact support, satisfying $\omega(0) = 0$ and $\sum_{n \in \mz} \omega(n) = 1$.  Then
\begin{equation}
\label{eq:deltamethodDFI}
\delta(n=0) = 2 \sum_{\substack{1 \leq d \mid n }} (\omega(d) - \omega(n/d)).
\end{equation}
On the other hand, one can detect the divisibility $d | n$ using additive characters, giving
\begin{equation*}
    \delta(n=0) = 2\sum_{d \geq 1} \frac{1}{d} \sum_{h \shortmod{d}} e_d(hn) (\omega(d) - \omega(n/d)).
\end{equation*}
Letting $k = (h,d)$ and changing variables $d = ck$, we obtain
\begin{equation}
\label{eq:deltamethodDFIadditivechars}
    \delta(n=0) = \sum_{c \geq 1} \frac{S(n,0;c)}{c} \sum_{k \geq 1} \frac{2}{k} (\omega(ck) - \omega(\frac{n}{ck})).
\end{equation}

One might recognize that the previous process has some similarities to the calculation of the Fourier expansion of Eisenstein series.  Indeed, the non-holomorphic Eisenstein series $E(z,s)$ has Fourier coefficients that are a priori given as sums of Ramanujan sums, but which can be calculated in ``closed form" as a divisor sum.  This is similar to the process of derivation of the delta symbol, but in reverse order.  

A precise way to link these two observations is to use a construction of Nelson \cite[Thm.\ 5.6]{Nelson} which expresses the constant function $1$ as an incomplete Eisenstein series.  
It is easy to describe the construction.  For $g:(0,\infty) \rightarrow \mc$ smooth with rapid decay at $0$, bounded at $\infty$, define the incomplete Eisenstein series
 $E(z, g) = \sum_{\gamma \in \Gamma_{\infty} \backslash \Gamma} g(\mathrm{Im}(\gamma z))$.
By Mellin inversion, 
\begin{equation*}
E(z, g) = \frac{1}{2 \pi i} \int_{(2)} \widetilde{g}(-s) E(z, s) ds.
\end{equation*}
Nelson chooses $\widetilde{g}(-s) = (H(s) - H(1-s)) \zeta^*(2s)$ where $H$ is a nice function satisfying $H(1) \neq H(0)$.  By shifting contours to the line $1/2$, we see that $E(z,g) = \zeta^*(2) (H(1) - H(0)) \frac{3}{\pi}$, since the integral along the line $1/2$ vanishes by the functional equation of the Eisenstein series, and by the fact that $H(s) - H(1-s)$ is odd under $s \rightarrow 1-s$.  Scaling $H$ appropriately gives that $E(z,g) = 1$.  This choice of $g$ is smooth on the positive reals, and is defined by
\begin{equation*}
    g(y) = \frac{1}{2 \pi i} \int_{(2)} (H(s) - H(1-s)) \zeta^*(2s) y^s ds.
\end{equation*}
Shifting contours either right or left shows $g(y) \ll y^A$ for $y \rightarrow 0^{+}$ and $g(y) = 1 + O(y^{-A})$ for $y \rightarrow \infty$, so this is indeed an allowable test function.

The fact that $E(z,g)$ is identically $1$ means that its $n$th Fourier coefficient is $\delta(n=0)$.  We can alternatively write the Fourier coefficients of $\zeta^*(2s) E(z,s)$ in terms of divisor functions, and derive a formula that closely resembles \eqref{eq:deltamethodDFI}.  That is, we have
\begin{equation}
\label{eq:deltaViaNelson}
\delta(n=0) = \frac{1}{2 \pi i} \int_{(2)} (H(s) - H(1-s)) (\text{$n$th Fourier coefficient of $\zeta^*(2s) E(z,s)$}) ds.
\end{equation}

Theorem \ref{thm:Einc=1} below gives a natural generalization of Nelson's construction to Eisenstein series on $\mathrm{Sp}_4$.  The proof is more difficult because the $\mathrm{Sp}_4$ Eisenstein series now depends on two complex variables, and has twelve polar lines (see Theorem \ref{thm:EisensteinSeriesMinimalProperties} for more details).

One of the earliest applications of the DFI delta method is a solution to the shifted convolution problem on $\GL_2$, which we describe now.  Let $f$ be a holomorphic cusp form of weight $k$ on $\SL_2(\mz)$, which we write as
\begin{equation*}
f(z) = \sum_{n \geq 1} \lambda_f(n) n^{\frac{k-1}{2}} e(nz).
\end{equation*}
Then \cite{DFI} showed for $q \geq 1$ that
\begin{equation}
\label{eq:DFIshiftedsumBound}
    \sum_{n} w(n/N) \lambda_f(n) \lambda_f(n+q) \ll_{q,f,w,\varepsilon} N^{3/4+\varepsilon}.
\end{equation}
Their method rewrites this shifted convolution sum as $\sum_{m,n} w(m/N) \lambda_f(m) \lambda_f(n) \delta(n=m+q)$, applies \eqref{eq:deltamethodDFIadditivechars}, and then uses the Voronoi formula for both the $m$- and $n$-sums.  The Ramanujan sum transforms into a Kloosterman sum, and the Weil bound completes the proof.

Now we reinterpret this proof using the Eisenstein series.  
We give a very brief sketch here, with more details in Section \ref{section:GL2model}.
We begin by expressing the shifted convolution sum as an inner product with Poincare series, namely $\langle f P_q, f \rangle$, where $P_q$ is a certain incomplete Poincare series.  This becomes a shifted sum after unfolding the integral with Poincare series.  To mimic the DFI delta method, we insert $1 = E(z, g)$ into the integral instead, and unfold with this incomplete Eisenstein series.  This leads to an alternative arrangement involving the integral of $|f|^2 P_q$ over the strip $x \pmod{1}$, $y \in \mr_{>0}$.  We apply the Fourier expansions of $f$ and $P_q$ and perform the $x$-integral.  The Fourier expansion of $P_q$ involves a sum of Kloosterman sums, and the Weil bound leads to a result equivalent to \eqref{eq:DFIshiftedsumBound}.
By the way, the Voronoi formula is not needed in this approach-- the analogous step is instead directly using the automorphy of $f$ which is crucially used in the unfolding process. 
We also mention that Holowinsky \cite{holowinsky2010sieving}
 implicitly used an approximate identity $E(z, g) \approx 1$ towards the QUE problem for $\GL_2$, which gives an alternative unfolding of the integral $\langle f P_q, f \rangle$ (see \cite[Prop 3.2]{holowinsky2010sieving} for this result).

Our strategy for Theorem \ref{thm:mainthm} is based on the above process.  First, we write the shifted convolution sum as an inner product with Poincare series (see Proposition \ref{prop:unfoldingBasicPoincare} below).  Inserting artificially $1$ as an incomplete Eisenstein series (now on $\mathrm{Sp}_4$) and unfolding with it, we obtain a similar integral over a $6$-dimensional strip.  Applying Fourier expansions, we can perform an integral over the torus (now $3$-dimensional), which leads to Proposition \ref{prop:ShiftedSumUnfoldedViaEisenstein} below.
Finally, we need to estimate the Fourier coefficients of the Poincare series.  The Fourier coefficients were essentially calculated by Kitaoka \cite{Kitaoka}, though he worked in the more restricted setting of holomorphic Siegel modular forms.  Nevertheless, by following his arguments, we obtain preliminary formulas for the Fourier coefficients in Propositions \ref{prop:rank2SumOfKloostermanSums}, \ref{prop.Rank1Poincare}, and \ref{prop.Rank0Poincare}.
However, what is needed here is to understand their behavior as a function of $N$.  This latter property has been glossed over so far in the $\GL_2$ toy model of the problem, but is brought to the forefront in the more detailed description in Section \ref{section:GL2model}.  
Bounding the Fourier coefficients requires both analytic and arithmetic inputs.  
We need to estimate certain integrals, and moreover, to use these estimates to constrain the moduli $C \in \mathrm{M}_2(\mz)$ in the sums of symplectic Kloosterman sums arising in Kitaoka's formulas.
A key arithmetic input is Kitaoka's bound on the symplectic Kloosterman sums (see Lemma \ref{lemma:KloostermanSumKitaokasBound} for the statement of his bound).  In the $\GL_2$ situation, one would encounter sums of the form $\sum_{c \leq X} |S(m,n;c)| \ll X^{\varepsilon} \sum_{c \leq X} c^{1/2} (m,n,c)^{1/2}$, using Weil's bound.  The factor $(m,n,c)^{1/2}$ here is innocuous, since one has standard bounds such as $\sum_{c \leq X} (c,q) \ll (Xq)^{\varepsilon}$, which follows from simply counting nonzero integers $m \leq X$ with $m \equiv 0 \pmod{d}$.  Kitaoka's bound also exhibits an extra gcd factor, though it is unfortunately much more difficult to control in our setting here.
The first problem with using Kitaoka's bound is that the gcd is with an integer (denoted $t$) which is the lower-right entry of a matrix $V^t T V$, where $V$ depends on $C$ in a somewhat indirect way.  In Lemmas \ref{lemma:UandVclassification} and \ref{lemma:CUVpq}, culminating with Corollary \ref{coro:tformula}, we work out a practically useful formula for this integer $t$. 
Aided with this formula for $t$, we then turn to estimating the sum of symplectic Kloosterman sums over the moduli $C$.  This turns into a fairly involved, but self-contained, counting problem, which is treated in Section \ref{section:countingproblems}.
Finally, we estimate the Fourier coefficients of the Poincare series with Theorem \ref{thm.Rank2Bound}, \ref{thm.Rank1Bound}, and Proposition \ref{prop.Rank0Poincare}.

\subsection{\texorpdfstring{$\GL_2$}{GL2} model of the proof}
\label{section:GL2model}
In this section, we give a more in-depth sketch of the $\GL_2$ version of our method, which may be of independent interest.  More importantly, this serves as a useful guide and model for the rest of the paper, since many of the aspects of the proof of Theorem \ref{thm:mainthm} are direct analogs of those sketched out here.

Let $\psi:(0,\infty) \rightarrow \mr$ be a smooth function with rapid decay at $0$, bounded at $\infty$, and for $q \in \mz$ define the Poincare series
\begin{equation*}
P_q(z, \psi) = \sum_{\gamma \in \Gamma_{\infty} \backslash \Gamma} e(q \mathrm{Re}(\gamma z)) \psi(\mathrm{Im}(\gamma z)),
\end{equation*}
which is an automorphic function on $\mathrm{SL}_2(\mz) \backslash \mh$.
For $f$ a cusp form on $\mathrm{SL}_2(\mz)$ of weight $k$, 
consider $I = \langle f P_q, f \rangle$. By unfolding, 
\begin{equation*}
I 
= \int_0^{\infty} \int_0^{1} y^k |f(z)|^2 e(qx) \psi(y) \frac{dx dy}{y^2}.
\end{equation*}
Applying the Fourier expansion of $f$, we get
\begin{align}
\label{eq:IformulaGL2model}
I = \sum_{m+q=n} \lambda_f(m) \overline{\lambda_f}(n) m^{\frac{k-1}{2}} n^{\frac{k-1}{2}}
\int_0^{\infty}  y^{k-1} \exp(-2 \pi (m+n) y) \psi(y) \frac{dy}{y}.
%\\
%= 
%\sum_{m-n=h} \lambda_f(m) \overline{\lambda_f}(n) \frac{m^{\frac{k-1}{2}} n^{\frac{k-1}{2}}}{(m+n)^{k-1}}
%\int_0^{\infty}  y^{k-1} \exp(-2 \pi  y) \psi(\frac{y}{m+n}) \frac{dy}{y}.
\end{align}
For $N \geq 1$ we choose $\psi$ to have support on $[1/N, 2/N]$, so that $I$ captures a shifted convolution sum with $m+n \lesssim N$.  Note the trivial bound on $I$ is $I \ll N^{1+ \varepsilon}$.

Next we approach $I$ using Nelson's construction, $1 = E(z,g)$ as in Section \ref{section:broadoverview}.  Unfolding with this Eisenstein series, we have
\begin{equation*}
I = \langle f E(\cdot, g) P_q, f \rangle 
= \int_0^{\infty} \int_0^{1} |f(z)|^2 P_q(z, \psi) g(y) y^k \frac{dx dy}{y^2}.
\end{equation*}
Next we apply the Fourier expansions of $f$ and $P_q$ and perform the $x$-integral.
Suppose $P_q$ has the Fourier expansion
\begin{equation*}
P_q(z, \psi) = \sum_{r \in \mz} a_{q}(r, y, \psi) e(rx).
\end{equation*}
Then
\begin{equation}
\label{eq:GL2sketchI}
I = 
\sum_{m+r=n} \lambda_f(m) m^{\frac{k-1}{2}} \overline{\lambda_f}(n) n^{\frac{k-1}{2}} 
\int_0^{\infty} g(y) y^{k-2} a_q(r,y,\psi) 
\exp(-2\pi(m+n) y)
dy.
\end{equation}
The decay of $g(y)$ near $y=0$ means the integral above is concentrated on $y \gg 1$.  On the other hand, the exponential decay effectively means that $y \ll 1$ as well (and $m$, $n$, $r \ll 1$ too).
The problem of estimating $I$ then boils down to estimating $a_q(r,y, \psi)$ with $y \asymp 1$, $r \ll 1$, and $\psi(v)$ supported on $v \asymp N^{-1}$.  

We now briefly compute $a_q$, which is a standard computation, but will nevertheless be useful as a reference and a guide for the Siegel case.  For more details, see 
\cite[Section 3.4]{IwaniecSpectral}.
We have
\begin{align*}
a_q(r, y, \psi) &= \int_0^{1} e(-rx) \sum_{\gamma \in \Gamma_{\infty} \backslash \Gamma} e(q \mathrm{Re}(\gamma z)) \psi(\mathrm{Im}(\gamma z)) dx
\\
&= 
 \sum_{(c,d) = 1}  
\int_0^{1} e(-rx) 
 e(q \mathrm{Re}(\frac{az+b}{cz+d})) \psi(\mathrm{Im}(\frac{az+b}{cz+d})) dx
=: \Sigma_{c=0} + \Sigma_{c \geq 1}.
\end{align*}
Note
$\Sigma_{c=0} = 
\int_0^{1} e(-rx) 
 e(q x) \psi(y) dx = \psi(y) \delta_{q=r}$.
The contribution of this term to \eqref{eq:GL2sketchI} is $O(N^{-100})$ by the rapid decay of $\psi(y)$ at $y=0$.

For $\Sigma_{c \geq 1}$, by a minor modification of \cite[(3.17)]{IwaniecSpectral}, we have
\begin{align*}
\Sigma_{c \geq 1}
% &=
% \sum_{c \geq 1} \sum_{(d,c) = 1} e_c(rd)
% \int_{d/c}^{1 + \frac{d}{c}} e(-rx)) 
%  e(q \mathrm{Re}(\frac{az+b - \frac{ad}{c}}{cz})) \psi(\mathrm{Im}(\frac{az+b - \frac{ad}{c}}{cz})) dx
%  \\
%  &=
%  \sum_{c \geq 1} 
% \sumstar_{d \shortmod{c}} e_c(rd) 
% \intR e(-rx)
%  e(q \mathrm{Re}(\frac{az-\frac{1}{c}}{cz})) \psi(\mathrm{Im}(\frac{az-\frac{1}{c}}{cz})) dx
%  \\
%  &= 
%   \sum_{c \geq 1} 
% \sumstar_{d \shortmod{c}} e_c(rd + q a) 
% \intR e(-rx)
%  e(q \mathrm{Re}(\frac{-1}{c^2z})) \psi(\mathrm{Im}(\frac{-1}{c^2z})) dx
%  \\
%  &= 
%   \sum_{c \geq 1} S(r,q;c)
% \intR e(-rx)
%  e(q \mathrm{Re}(\frac{-1}{c^2 z})) \psi(\mathrm{Im}(\frac{-1}{c^2 z})) dx
%  \\
%  &= \sum_{c \ge 1} S(r,q;c)
%  \intR e(-rx - \frac{qx}{c^2(x^2+y^2)}) 
%  \psi(\frac{y}{c^2 (x^2 + y^2)}) dx
%  \\
 &=  y \sum_{c \ge 1} S(r,q;c)
 \intR e(-rxy - \frac{qx}{c^2 y(x^2+1)}) 
 \psi(\frac{1}{c^2 y (x^2 + 1)}) dx.
\end{align*}
The support of $\psi$ means that the $x$-integral may be restricted to
$c^2 y (1+x^2) \asymp N$.  Moreover, since $y \asymp 1$ by assumption, this means $c^2 (1+x^2) \asymp N$.  Thus the $c$-sum may be truncated at $c \ll N^{1/2}$, and then the $x$-integral is localized on $x \asymp N^{1/2}/c$.  Hence by the triangle inequality and the Weil bound we obtain
\begin{equation*}
|a_q(r, y, \psi)| \ll \sum_{c \ll \sqrt{N}} |S(r,q;c)| \frac{N^{1/2}}{c} \ll N^{3/4+\varepsilon}.
\end{equation*}
This implies $I \ll N^{3/4+\varepsilon}$, which is consistent with \eqref{eq:DFIshiftedsumBound}.  Indeed, we would argue that this proof is essentially the same as that of \cite{DFI}, though with a rather different cosmetic appearance.

\subsection{Discussions on test functions}
The authors did not thoroughly investigate the image of the map $\phi_N \mapsto W_{N}$, which is the Laplace transform of $\phi_N(Y)\det(Y)^{k-3}$ in $\mathrm{M}_2(\mr)^{\mathrm{sym}, >0}$ (see Proposition \ref{prop:unfoldingBasicPoincare} for an explicit expression for the weight function, and \cite[Sect. 7.2.3]{muirhead1982aspects} for the Laplace transform on $\mathrm{M}_2(\mr)^{\mathrm{sym}, >0}$).  A simple observation is that the integral transform on the RHS of \eqref{eq:PoincareSeriesIntegralTransform} extends to a holomorphic function of $M$, which places an obvious restriction on the class of test functions arising in the image of this map.  For comparison, in the $\GL_2$ setting, the resulting integral which appears in \eqref{eq:IformulaGL2model} is the Laplace transform of a smooth function with rapid decay at $0$, which implies that the resulting integral transform extends to a holomorphic function of $m+n$ with $\mathrm{Re}(m+n) > 0$.

Based on the analogy with the $\GL_2$ problem, the success of Theorem \ref{thm:mainthm} suggests that one could study the shifted convolution problem for more general test functions using a symplectic delta method and a similar sequence of steps as in \cite{DFI} (delta method, Voronoi summation, and the Weil bound).
In order for this method to succeed, some groundwork needs to be developed, especially the derivation of a symplectic delta method.  We expect that such a delta method could be reverse-engineered from Theorem \ref{thm:Einc=1}, 
similarly to \eqref{eq:deltaViaNelson},
but leave this problem for a future occasion.

\subsection{Acknowledgments}
The authors thank Stephen D. Miller for helpful discussions on Eisenstein series.

\section{Setup}
The authors found \cite{JLSfundamental, JLSQUE, Hangman} useful sources for much of this material, and refer to them for more comprehensive discussions.  Many of their notational choices have been adopted in this work.
\subsection{Notation with groups}
Let $\M_n(R)$ denote $n \times n$ matrices with elements in a ring $R$, and similarly let
$\M_n^{\text{sym}}(R)$ denote the symmetric matrices in $\M_n(R)$.
Let $G= \mathrm{Sp_4}(\mr)$ and $\Gamma = \mathrm{Sp}_4(\mz)$.
For $X \in \M_2^{\text{sym}}(\mr)$, let $n(X) = (\begin{smallmatrix} I & X \\ & I \end{smallmatrix}) \in G$.
For $A \in \GL_2(\mr)$, let $m(A) = (\begin{smallmatrix} A \\ & A^{-t} \end{smallmatrix})$.
The Iwasawa decomposition implies that $G = N(\mr) M(\mr)^{+} K_{\infty}$, where $N(\mr) = \{ n(X): X \in \M_2^{\text{sym}}(\mr) \}$, $M(\mr)^{+} = \{ m(A) : A \in \mathrm{GL}_2^{+}(\mr) \}$, and $K_{\infty} = \{ (\begin{smallmatrix} A & B \\ - B & A\end{smallmatrix}) \in G \}$.

It is useful to pass back and forth between $\mh_2$ and $G$.  
Recall the map $\iota: \mathrm{GL}_2^{+}(\mr)/\mathrm{SO}_2(\mr) \xrightarrow{\sim} \mathrm{M}_2(\mr)^{\mathrm{sym}, >0}$, $\iota(g) = g g^t$.
We have the identification $G/K_{\infty} \leftrightarrow \mh_2$, as follows.  For $g = (\begin{smallmatrix} A & B \\ C & D \end{smallmatrix}) \in G$, we have $g(iI) = (Ai+B)(Ci+D)^{-1} \in \mh_2$.  The stabilizer of $iI$ in $G$ is easily seen to be $K_{\infty}$.  Moreover, $G$ acts transitively on $\mh_2$, since if $Z=X+iY \in \mh_2$, then we can write $Y = \iota(R) = R  R^t$ with $R \in \mathrm{GL}_2^{+}(\mr)/\mathrm{SO}_2(\mr)$, and then $n(X) m(R)(i) = X + iY$.  Hence we have the identification
\begin{equation}
\label{eq:SigelidentificationUpperHalfPlane}
    Z = X + iY \in \mh_2 \longleftrightarrow \begin{pmatrix} 1 & X \\ & 1 \end{pmatrix} \begin{pmatrix} R & \\ & R^{-t} \end{pmatrix} K_{\infty} \in G/K_{\infty}, \qquad Y = R R^t.
\end{equation}
We will frequently use the following coordinates on $\mh_2$.  
For $X+iY \in \mathbb{H}_2$, 
write $Y = (\begin{smallmatrix} y_1 & y_2 \\ y_2 & y_3 \end{smallmatrix}) = \iota(R)$ with $R \in \mathrm{GL}_2^{+}(\mr)$ expressed in Iwasawa form as $R  = (\begin{smallmatrix} 1 & u \\ & 1 \end{smallmatrix}) (\begin{smallmatrix} \sqrt{r_1} & \\ & \sqrt{r_2} \end{smallmatrix}) k$ with $k \in \mathrm{SO}_2(\mr)$.
Explicitly performing the matrix multiplication gives
\begin{equation}
\label{eq:YvsAcoordinates}
y_1 = r_1 + u^2 r_2, \qquad y_2 = u r_2, \qquad y_3 = r_2, \qquad \det(Y) = r_1 r_2.  
\end{equation}
Moreover, $\frac{dY}{\det(Y)^3} = \frac{du dr_1 dr_2}{r_1^3 r_2^2}$ (cf. \eqref{eq:PeterssonInnerProductDefinitionSiegel}).

%\blue{Note to self: The condition that this matrix is in $K_{\infty}$ is that $A A^t + B B^t = I$ and $A B^t = B A^t$.  This shows that $m((\begin{smallmatrix} -1 & \\ & 1 \end{smallmatrix}))$ of determinant $-1$ is in $K_{\infty}$.  This explains why $M(\mr)^{+}$ appears here.}

Following the presentation in \cite{Hangman},
define the minimal parabolic subgroup $P_0$ and the Siegel parabolic subgroup $P_{\alpha}$ by 
\begin{equation*}
P_0 = \left\{ \begin{pmatrix} * & * & * & * \\ & * &* & * \\ & & * & \\ & & * & * \end{pmatrix} \right\} \cap G,
\qquad 
P_\alpha = \left\{ \begin{pmatrix} * & * & * & * \\ * & * &* & * \\ & & * & * \\ & & * & * \end{pmatrix} \right\} \cap G.
\end{equation*}
We have the decompositions $P_{\alpha}  = N_{\alpha} M_{\alpha}$ and $P_0 = N_0 M_0$ where $N_{\alpha} = N(\mr)$, $M_{\alpha} = M(\mr)$, and
\begin{equation*}
    N_0 = \left\{ \begin{pmatrix} 1 & n_1 & n_2 & n_3 \\ & 1 & n_4 & n_5 \\ & & 1 & \\ & & -n_1 & 1 \end{pmatrix} \in G,
    \qquad n_3 = n_1 n_5 + n_4
    \right\},
\end{equation*}
and $M_0 = \{ \mathrm{diag}(y_1, y_2, y_1^{-1}, y_2^{-1}) : y_1, y_2 > 0 \}$.  Define the map $m_{\alpha}:G/K_{\infty} \rightarrow M_{\alpha}^+/(K_{\infty} \cap M_{\alpha}^+)$ as the projection map with respect to the Iwasawa decomposition $G = N_{\alpha} M_{\alpha}^+ K_{\infty}$ (note that $K_{\infty} \cap M_{\alpha}^+ = \{ (\begin{smallmatrix} k & \\ & k^{-t} \end{smallmatrix}) : k \in \mathrm{SO}_2(\mr) \} = m(\mathrm{SO}_2(\mr)))$. 
It is also convenient to define $\mu_{\alpha}: M_{\alpha}^{+}/(K_{\infty} \cap M_{\alpha}^{+}) \rightarrow \GL_2^{+}(\mr)/\mathrm{SO}_2(\mr)$, by $\mu_{\alpha}(\begin{smallmatrix} A & \\ & A^{-t} \end{smallmatrix}) = A \cdot \mathrm{SO}_2(\mr)$.  Finally, we let
$\overline{m}_{\alpha} : G/K_{\infty} \rightarrow \GL_2^{+}(\mr)/\mathrm{SO}_2(\mr)$, by $\overline{m}_{\alpha} = \mu_{\alpha} \circ m_{\alpha}$.

In particular, for $Z \in \mh_2$, corresponding to $g K_{\infty}$ via \eqref{eq:SigelidentificationUpperHalfPlane} (i.e., $g(i) = Z$), we have
\begin{equation}
\label{eq:iotainverseVSmalpha}
    \iota^{-1}(\mathrm{Im}(Z)) = \overline{m}_{\alpha}(g).
\end{equation}

For $X \in \mathrm{M}_{2}(\mr)$, we typically use the sup norm $\| X \| = \| X \|_{\infty}$, though occasionally the operator norm $\|X \|_{\mathrm{op}}$ is useful.

\subsection{Groups, cosets, etc.}

The following simple lemma is used a number of times.
\begin{mylemma}
\label{lemma:malphaForGL2vsSp4commutingFormula}
Let $p = n(X) m(A) \in P_{\alpha}$ and $q \in G$.  Then
$\overline{m}_{\alpha}(p q) = A \overline{m}_{\alpha}(q)$.  In particular,
if $\gamma \in \mathrm{GL}_2(\mr)$ and $g \in G$, then
\begin{equation}
\label{eq:malphaForGL2vsSp4commutingFormula}
\gamma \cdot \overline{m}_{\alpha}(g) = \overline{m}_{\alpha}(m(\gamma) g).
\end{equation}
\end{mylemma}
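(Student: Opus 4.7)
The plan is to prove the lemma by direct Iwasawa computation, exploiting the semidirect product structure of $P_\alpha = N_\alpha \rtimes M_\alpha$ and the fact that $K_\infty$ is absorbed on the right. The key algebraic identity is the conjugation formula
\begin{equation*}
m(A)\,n(Y)\,m(A)^{-1} = n(A Y A^t),
\end{equation*}
which follows from a one-line block matrix multiplication: writing $m(A) = (\begin{smallmatrix} A & \\ & A^{-t} \end{smallmatrix})$ and $n(Y) = (\begin{smallmatrix} I & Y \\ & I \end{smallmatrix})$, the product is $(\begin{smallmatrix} I & AYA^t \\ & I \end{smallmatrix})$. In particular $m(A) n(Y) = n(A Y A^t) m(A)$, so elements of the Levi normalize $N_\alpha$.

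First I would Iwasawa-decompose $q$ relative to $P_\alpha$: write $q = n(Y)\,m(B)\,k$ with $B \in \GL_2^+(\mr)$ and $k \in K_\infty$, so that by definition $\overline{m}_\alpha(q) = B \cdot \mathrm{SO}_2(\mr)$. Then multiplying on the left by $p = n(X) m(A)$ and using the commutation identity above gives
\begin{equation*}
p q \;=\; n(X)\,m(A)\,n(Y)\,m(B)\,k \;=\; n(X)\,n(A Y A^t)\,m(A)\,m(B)\,k \;=\; n(X + A Y A^t)\,m(AB)\,k.
\end{equation*}
This is an Iwasawa decomposition of $pq$ with $N_\alpha$-part $n(X + AYA^t)$, $M_\alpha$-part $m(AB)$, and $K_\infty$-part $k$. (If $\det A < 0$, one interprets $m(AB) \in M_\alpha$ and passes to its $M_\alpha^+$ representative by absorbing the relevant sign into $k$ through an element of $K_\infty \cap M_\alpha$; the resulting coset in $\GL_2^+(\mr)/\mathrm{SO}_2(\mr)$ is unchanged.) Applying $\mu_\alpha$ yields
\begin{equation*}
\overline{m}_\alpha(pq) = \mu_\alpha(m(AB)) = AB \cdot \mathrm{SO}_2(\mr) = A \cdot \bigl(B \cdot \mathrm{SO}_2(\mr)\bigr) = A \cdot \overline{m}_\alpha(q),
\end{equation*}
which is the first claim.

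For the second claim, I would simply specialize: given $\gamma \in \GL_2(\mr)$ and $g \in G$, take $p = m(\gamma) \in P_\alpha$ (so $X = 0$ and $A = \gamma$) and $q = g$. The formula $\overline{m}_\alpha(pq) = A \cdot \overline{m}_\alpha(q)$ becomes exactly $\overline{m}_\alpha(m(\gamma) g) = \gamma \cdot \overline{m}_\alpha(g)$.

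There is no real obstacle here; the only subtlety is bookkeeping the positivity/sign issue when $A$ has negative determinant, which is absorbed by the passage from $M_\alpha$ to $M_\alpha^+/(K_\infty \cap M_\alpha^+)$ and causes no trouble in the resulting coset in $\GL_2^+(\mr)/\mathrm{SO}_2(\mr)$. In the intended applications later in the paper, $\gamma$ will typically arise as an integer matrix in $\mathrm{GL}_2(\mz)$ (often in $\mathrm{SL}_2(\mz)$), so positivity is automatic.
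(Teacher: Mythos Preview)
Your proof is correct and follows essentially the same approach as the paper: Iwasawa-decompose $q$, use the commutation identity $m(A)n(Y) = n(AYA^t)m(A)$ to rewrite $pq$ in Iwasawa form, and read off $\overline{m}_\alpha(pq) = AB \cdot \mathrm{SO}_2(\mr)$. The paper's proof is slightly terser (it performs the block matrix multiplication in one displayed line rather than isolating the conjugation formula), and it does not pause to discuss the $\det A < 0$ case, but the argument is the same.
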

\begin{proof}
Write $q$ in Iwasawa form, say
%$p = (\begin{smallmatrix} 1 & X \\  & 1 \end{smallmatrix}) %(\begin{smallmatrix} A & \\ & A^{-t} \end{smallmatrix})$ and
$q = (\begin{smallmatrix} 1 & Y \\  & 1 \end{smallmatrix}) (\begin{smallmatrix} B & \\ & B^{-t} \end{smallmatrix}) k$.
Then
\begin{equation*}
p q = (\begin{smallmatrix} 1 & X \\  & 1 \end{smallmatrix}) (\begin{smallmatrix} A & \\ & A^{-t} \end{smallmatrix})
(\begin{smallmatrix} 1 & Y \\  & 1 \end{smallmatrix}) (\begin{smallmatrix} B & \\ & B^{-t} \end{smallmatrix}) k
= (\begin{smallmatrix} 1 & X + AYA^t \\  & 1 \end{smallmatrix}) (\begin{smallmatrix} AB & \\ & (AB)^{-t} \end{smallmatrix}) k.
\end{equation*}
Then
 $\overline{m}_{\alpha}(pq) = AB \cdot \mathrm{SO}_2(\mr) = A \overline{m}_{\alpha}(q)$.  Then \eqref{eq:malphaForGL2vsSp4commutingFormula} follows from $p = m(\gamma)$.
\end{proof}

% The following is easily verified.
% \begin{mylemma}[Explicit computation of Iwasawa form]
% \label{lemma:IwasawaFormExplicit}
% Suppose $g = (\begin{smallmatrix} a & b \\ c & d \end{smallmatrix}) \in \mathrm{GL}_2(\mr)$.  
% Let $h = \sqrt{c^2 + d^2}$,  $D = \det(g)$, and $\delta = D/|D|$.
% Then
% \begin{equation*}
% g = \begin{pmatrix} 1 & \frac{ac + bd}{h^2} \\ 0 & 1 \end{pmatrix}
% \begin{pmatrix} \frac{\delta D}{h} & 0 \\ 0 & h \end{pmatrix}
% \underbrace{\begin{pmatrix} \delta d/h & -\delta c/h \\ c/h & d/h \end{pmatrix}}_{\in O_2(\mr)}.
% \end{equation*}
% \end{mylemma}
%Note that $\frac{ac+bd}{h^2}$ is invariant under the center, while $D/h$ and $h$ scale linearly with the center.
  %\begin{proof}
 %Direct calculation.
% First note that
% \begin{equation}
% \begin{pmatrix} a & b \\ c & d \end{pmatrix}
% = 
% \begin{pmatrix} \frac{ad-bc}{h} & \frac{ac+bd}{h} \\ 0 & h \end{pmatrix}
% \begin{pmatrix} d/h & -c/h \\ c/h & d/h \end{pmatrix},
% \end{equation}
% which is a $BK$ factorization.  (I computed this by multiplying $g$ on the right by a rotation to kill the lower-left entry). Then it's easy to factor in $B$.
%\end{proof}

\subsection{Poincare series}
Recall the definition \eqref{eq:PoincareSeriesDefinition}.
These types of Poincare series facilitate the unfolding process with respect to the Fourier expansion \eqref{eq:SiegelFourierExpansion}.  Suppose that $f \in L^{\infty}(\Gamma \backslash \mh_2)$.  Then unfolding gives
\begin{equation}
\label{eq:PoincareSeriesUnfoldingWithYcoord}
\int_{\Gamma \backslash \mh_2} f(Z) P_Q(Z, \phi) d\mu 
= \int_{\Gamma_{\infty} \backslash \mh_2} f(X+iY) e(\mathrm{Tr}(Q X)) \phi(\iota^{-1}(Y)) d\mu.
\end{equation}
Using the coordinates \eqref{eq:YvsAcoordinates}, we obtain
\begin{equation}
\label{eq:PoincareSeriesUnfolding}
\int_{\Gamma \backslash \mh_2} f(Z) P_Q(Z, \phi) d\mu 
= \int_{\substack{x_1, x_2, x_3 \in \mz \backslash \mr \\ 
r_1, r_2 \in \mr_{>0}, u \in \mr}
}
f\big((\begin{smallmatrix} 1 & X \\ & 1 \end{smallmatrix})( \begin{smallmatrix} R & \\ & R^{-t} \end{smallmatrix}) \big)
 e(\mathrm{Tr}(Q X)) \phi(R) 
dX \frac{ du dr_1 dr_2}{r_1^3 r_2^2}
.
\end{equation}

Taking $Q=0$ in \eqref{eq:PoincareSeriesDefinition} gives rise to an incomplete Eisenstein series
\begin{equation}
\label{eq:EincompleteDef}
P_0(Z, \phi) = \sum_{\gamma \in \Gamma_{\infty} \backslash \Gamma} 
\phi(\iota^{-1} \mathrm{Im}(\gamma Z)).
\end{equation}
We wish to connect \eqref{eq:EincompleteDef} with the automorphic Eisenstein series appearing in \cite{Hangman}, which uses different notation.
First
note that $\Gamma_{\infty} = N_{\alpha} \cap \Gamma$.
Using the identification \eqref{eq:SigelidentificationUpperHalfPlane}, more specifically via \eqref{eq:iotainverseVSmalpha},
we have the desired alternative expression
  %Hence, with $Z = X+iY$, $Y=R R^t$ and $g = (\begin{smallmatrix} 1 & X \\ & 1 \end{smallmatrix}) (\begin{smallmatrix} R & \\ & R^{-t} \end{smallmatrix}) k$, we have
\begin{equation}
\label{eq:P0defVariant}
    P_0(Z, \phi) = \sum_{\gamma \in (N_{\alpha} \cap \Gamma) \backslash \Gamma} 
    \phi(\overline{m}_{\alpha}(\gamma g)),
    \qquad (\text{where } g(i) = Z).
\end{equation}

 %Therefore, a function $\phi: \mathrm{GL}_2^{+}(\mr)/\mathrm{SO}_2(\mr)$ of the form $\phi(R)$ can be thought of as a function on the $M_{\alpha}$ component of $G$ or $G/K = \mh_2$.  

%Then in the notation of \eqref{eq:EincompleteDef}, we have that $\phi(\iota^{-1}(\mathrm{Im}(Z))$ is the same as $\phi(m_{\alpha}(g))$ where $g$ corresponds to $Z$, meaning $g(i) = Z$.

\subsection{Kloosterman sums}
\label{section:KloostermanSumsSymplectic}
We define symplectic Kloosterman sums following Kitaoka.  Let $C \in \mathrm{M}_2(\mz)$ with $\det(C) \neq 0$.  For $Q, T \in \Lambda$, define
\begin{equation*}
K(Q,T;C) = \sum_{D} e(\mathrm{Tr}(A C^{-1} Q + C^{-1} D T)),
\end{equation*}
where $D$ runs over $\{D \in \mathrm{M}_2(\mz) \mod C \Lambda' : (\begin{smallmatrix} * & * \\ C & D \end{smallmatrix}) \in \Gamma\}$, where $\Lambda' = \mathrm{M}_2^{\mathrm{sym}}(\mz)$.  Moreover, $A$ is any matrix in $\mathrm{M}_2(\mz)$ such that $(\begin{smallmatrix} A & * \\ C & D \end{smallmatrix}) \in \Gamma$.  Kitaoka showed that this Kloosterman sum is well-defined.  
In addition, he showed the useful symmetry \cite[Prop. 1]{Kitaoka}
\begin{equation}
\label{eq:KloostermanSymmetry}
    K(Q, T;C) = K(T,Q;C^t).
\end{equation}

\begin{mylemma}[Kitaoka's bound]
\label{lemma:KloostermanSumKitaokasBound}
Let $C \in \mathrm{M}_2(\mz)$, $\det(C) \neq 0$, and $C = U^{-1} (\begin{smallmatrix} \alpha_1 & \\ & \alpha_2 \end{smallmatrix}) V^{-1}$ with $U \in \mathrm{GL}_2(\mz)$$, V \in \mathrm{SL}_2(\mz)$ and $\alpha_1 \mid \alpha_2$ with $\alpha_1, \alpha_2 > 0$.  Then
\begin{equation}
K(Q,T;C) \ll \alpha_1^2 \alpha_2^{1/2+\varepsilon} (\alpha_2, t)^{1/2},
\end{equation}
where $t$ is the lower-right entry of $V^t T V$.
\end{mylemma}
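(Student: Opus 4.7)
My plan is to follow Kitaoka's original strategy: reduce to the case of diagonal $C$ using an equivariance property of the symplectic Kloosterman sum under the Levi action, and then evaluate the reduced sum by means of the Weil bound on classical Kloosterman sums.

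For $P \in \mathrm{GL}_2(\mz)$, the matrix $m(P) = (\begin{smallmatrix} P & \\ & P^{-t} \end{smallmatrix})$ lies in $\Gamma$. A direct calculation shows that the transformation $\gamma \mapsto m(U^{-t}) \gamma m(V)$, applied to $\gamma = (\begin{smallmatrix} A & B \\ C & D \end{smallmatrix})$, produces a new symplectic matrix whose lower-left block is $UCV$, and that the exponent $\mathrm{Tr}(AC^{-1}Q + C^{-1}DT)$ rewrites in terms of the original $A, D$ as $\mathrm{Tr}(AC^{-1}(U^{-1}QU^{-t}) + C^{-1}D(V^{-t}TV^{-1}))$. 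Since $D$ bijects with $D' := UDV^{-t}$ under the corresponding quotients, this yields the invariance $K(Q,T;C) = K(U^{-1}QU^{-t}, V^{-t}TV^{-1}; UCV)$. Choosing $U \in \mathrm{GL}_2(\mz)$ and $V \in \mathrm{SL}_2(\mz)$ so that $UCV = C_0 := \mathrm{diag}(\alpha_1,\alpha_2)$ is the Smith normal form (with $\alpha_1 \mid \alpha_2$, $\alpha_i > 0$), and relabeling $V \leftrightarrow V^{-1}$ inside $\mathrm{SL}_2(\mz)$ so that the lemma's $V^t T V$ matches the transform above, it suffices to bound $K(Q', T'; C_0)$ where $t = T'_{22}$ is the lower-right entry appearing in the statement.

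Next I would evaluate $K(Q',T';C_0)$ explicitly. Writing $D = (\begin{smallmatrix} d_1 & d_2 \\ d_3 & d_4 \end{smallmatrix})$, the symmetry of $C_0 D^t$ forces $d_3 = (\alpha_2/\alpha_1) d_2$, and reduction modulo $C_0\Lambda'$ permits representatives with $d_1, d_2 \in \mz/\alpha_1\mz$ and $d_4 \in \mz/\alpha_2\mz$. Solving the symplectic relations for $A$ (which is determined by $D$ up to the harmless ambiguity $A \mapsto A + X C_0$ for $X$ symmetric integral) pins down $a_1, a_2, a_4$ through a small system of congruences modulo $\alpha_1$ and $\alpha_2$; in particular the $(2,2)$ component of $AD^t \equiv I \pmod{C_0}$ gives $a_4 d_4 \equiv 1 \pmod{\alpha_2}$, enforcing $(d_4,\alpha_2)=1$ and $a_4 \equiv \overline{d_4} \pmod{\alpha_2}$. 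The exponent $\mathrm{Tr}(AC_0^{-1}Q' + C_0^{-1}DT')$ then separates into a piece linear in $d_1, d_2$ and a piece in $d_4$ of the form $e((\ast \overline{d_4} + t d_4)/\alpha_2)$, where $t = T'_{22}$ enters through $(C_0^{-1}DT')_{22} = d_4 T'_{22}/\alpha_2$ and $\ast$ is an explicit integer depending on $Q'$ and on the other variables. Bounding the $d_1, d_2$ sums trivially by $O(\alpha_1^2)$ and invoking Weil's bound on the remaining classical Kloosterman sum in $d_4$ gives
\begin{equation*}
|K(Q',T';C_0)| \ll \alpha_1^2 \cdot \alpha_2^{1/2+\varepsilon}(\ast, t, \alpha_2)^{1/2} \leq \alpha_1^2 \alpha_2^{1/2+\varepsilon}(\alpha_2, t)^{1/2},
\end{equation*}
which is the desired estimate.

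The principal obstacle lies in the diagonal analysis: verifying that the symplectic constraints really decouple in the advertised manner, so that all $T'$-dependence in the nontrivial Kloosterman sum is concentrated in the entry $t = T'_{22}$, and carefully accounting for the ambiguity in the choice of $A$ so that it does not alter the final exponential. Some minor care is also required with conventions in the reduction step (the choice between $V^t T V$ and $V^{-t} T V^{-1}$ amounting to whether $V$ or $V^{-1}$ plays the role of the Smith normal form factor). These are precisely the technical points worked out in Kitaoka's original paper, and the plan above essentially reproduces his argument.
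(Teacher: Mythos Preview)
The paper does not supply its own proof of this lemma; it is quoted as Kitaoka's result (with a citation to \cite{Kitaoka}) and used as a black box, the only additional commentary being the remark that later refinements exist but are not needed. Your proposal, which sketches the Levi--equivariance reduction to a diagonal modulus followed by an application of the Weil bound to the residual classical Kloosterman sum in the $d_4$ variable, is exactly Kitaoka's argument, and your closing paragraph correctly identifies the technical points (well-definedness of $A$, and matching the $V^tTV$ versus $V^{-t}TV^{-1}$ conventions) that need to be checked. So there is nothing to compare against in the paper itself, and your sketch is faithful to the cited source.
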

Remarks.  There have been additional improvements to Kitaoka's bound, as in \cite{Toth}, but Kitaoka's original result is sufficient for our purposes.  
%In addition, Kitaoka stated the result with $V \in \mathrm{GL}_2(\mz)$ but if $\det(V) = -1$ one can multiply through with $(\begin{smallmatrix} -1 & \\ & 1 \end{smallmatrix})$ to reduce to the case stated above.

We collect some more refined pieces of information to aid in applying Kitaoka's bound, the main issue being the parameter $t$ which is not yet given in an explicit form amenable to further estimates.  First, we note that $\alpha_1 = \gcd(c_1, c_2, c_3, c_4)$.  Say $\alpha_2 = \beta \alpha_1$. 
Then we have the simplified bound
\begin{equation}
\label{eq:KitaokaSimplified}
K(Q, T;C) \ll \alpha_1^{3+\varepsilon} \beta^{1/2+\varepsilon} (\beta, t)^{1/2}.
\end{equation}

We employ the standard notations 
\begin{equation*}
    \Gamma^0(q)=\{(\begin{smallmatrix}
        a& b \\ c & d
    \end{smallmatrix})\in \mathrm{SL}_2(\mathbb{Z}):q|b\} \quad \text{ and } \quad \Gamma_0(q)=\{(\begin{smallmatrix}
        a& b \\ c & d
    \end{smallmatrix})\in \mathrm{SL}_2(\mathbb{Z}):q|c\},
\end{equation*}
and recall the well-known relation $(\begin{smallmatrix} 1 & \\ & \beta \end{smallmatrix}) \Gamma^0(\beta) (\begin{smallmatrix} 1 & \\ & \beta \end{smallmatrix})^{-1} = \Gamma_0(\beta)$.

\begin{mylemma}
\label{lemma:UandVclassification}
Suppose $C = U_0^{-1} (\begin{smallmatrix} 1 & \\ & \beta \end{smallmatrix}) V_0^{-1}$ with $(U_0, V_0) \in \mathrm{GL}_2(\mz) \times \mathrm{SL}_2(\mz)$, and $\beta \geq 1$.  Then the set of all pairs $(U,V) \in \mathrm{GL}_2(\mz) \times \mathrm{SL}_2(\mz)$ such that $C = U^{-1} (\begin{smallmatrix} 1 & \\ & \beta \end{smallmatrix}) V^{-1}$ are those of the form $(\gamma' U_0, V_0 \gamma)$ where $\gamma \in \Gamma^{0}(\beta)$ and $\gamma' = (\begin{smallmatrix} 1 & \\ & \beta \end{smallmatrix}) \gamma^{-1} (\begin{smallmatrix} 1 & \\ & \beta \end{smallmatrix})^{-1}$.
\end{mylemma}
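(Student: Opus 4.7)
The plan is to reduce the statement to a single matrix equation between $U_0^{-1} D V_0^{-1}$ and $U^{-1} D V^{-1}$ (where $D = (\begin{smallmatrix} 1 & \\ & \beta \end{smallmatrix})$), and then to determine the integrality constraints on the ambiguity.

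First I would set $\gamma' = U U_0^{-1}$ and $\gamma = V_0^{-1} V$, so that $U = \gamma' U_0$, $V = V_0 \gamma$ automatically, with $\gamma' \in \mathrm{GL}_2(\mz)$ and $\gamma \in \mathrm{SL}_2(\mz)$ (the determinants of $V$ and $V_0$ force $\det \gamma = 1$). Substituting into $U^{-1} D V^{-1} = U_0^{-1} D V_0^{-1}$ and left/right multiplying by $U_0$ and $V_0$, the equation $C = U^{-1} D V^{-1}$ is equivalent to
\begin{equation*}
D = \gamma' D \gamma, \qquad \text{i.e.,} \qquad \gamma' = D \gamma^{-1} D^{-1}.
\end{equation*}
Thus the pairs $(U,V)$ are in bijection with those $\gamma \in \mathrm{SL}_2(\mz)$ for which $\gamma' := D\gamma^{-1}D^{-1}$ happens to lie in $\mathrm{GL}_2(\mz)$.

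Next I would carry out the explicit conjugation. Writing $\gamma = (\begin{smallmatrix} a & b \\ c & d \end{smallmatrix}) \in \mathrm{SL}_2(\mz)$ and using $\gamma^{-1} = (\begin{smallmatrix} d & -b \\ -c & a \end{smallmatrix})$, a direct multiplication gives
\begin{equation*}
\gamma' = D \gamma^{-1} D^{-1} = \begin{pmatrix} d & -b/\beta \\ -\beta c & a \end{pmatrix}.
\end{equation*}
This is an integer matrix if and only if $\beta \mid b$, i.e., $\gamma \in \Gamma^{0}(\beta)$. Its determinant is $ad - bc = 1$, so automatically $\gamma' \in \mathrm{SL}_2(\mz) \subset \mathrm{GL}_2(\mz)$. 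Conversely any $\gamma \in \Gamma^{0}(\beta)$ produces an integer $\gamma'$ and hence a valid pair $(U,V)$.

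There is no serious obstacle here: the argument is essentially a one-line computation once the equation is reduced to $\gamma' D \gamma = D$. The only minor care is in normalizing $\gamma'$ and $\gamma$ on the correct sides (one on the left of $U_0$, the other on the right of $V_0$), and in noting that because $V, V_0 \in \mathrm{SL}_2(\mz)$ we have $\det \gamma = 1$, which then forces $\det \gamma' = 1$ as well, even though $U,U_0$ are only constrained to lie in $\mathrm{GL}_2(\mz)$.
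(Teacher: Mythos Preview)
Your proposal is correct and follows essentially the same approach as the paper: reduce to $\gamma' D \gamma = D$ and deduce $\gamma \in \Gamma^{0}(\beta)$ with $\gamma' = D\gamma^{-1}D^{-1}$. You give a bit more detail than the paper (the explicit matrix computation and the converse direction), but the argument is the same.
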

\begin{proof}
Suppose $U_0^{-1} (\begin{smallmatrix} 1 & \\ & \beta \end{smallmatrix}) V_0^{-1} = U^{-1} (\begin{smallmatrix} 1 & \\ & \beta \end{smallmatrix}) V^{-1}$ and $V = V_0 \gamma$ and $U = \gamma' U_0$, with $\gamma \in \mathrm{SL}_2(\mz)$ and $\gamma' \in \mathrm{GL}_2(\mz)$.  Then $\gamma' (\begin{smallmatrix} 1 & \\ & \beta \end{smallmatrix}) \gamma = (\begin{smallmatrix} 1 & \\ & \beta \end{smallmatrix})$, which implies $\gamma \in \Gamma^{0}(\beta)$, and then $\gamma' = (\begin{smallmatrix} 1 & \\ & \beta \end{smallmatrix}) \gamma^{-1} (\begin{smallmatrix} 1 & \\ & \beta \end{smallmatrix})^{-1}$, as claimed.
\end{proof}

\begin{mylemma}
\label{lemma:CosetRepresentativesGamma0beta}
A complete set of coset representatives for $\mathrm{SL}_2(\mz)/\Gamma^{0}(\beta)$ is
the set of matrices
of the form
\begin{equation*}
\gamma_{p,q} = \begin{pmatrix} * & p \\ * & q \end{pmatrix} 
\end{equation*}
where $q \mid \beta$ and $p \pmod{\beta/q}$ (with $\gcd(p,q) = 1$, and the left column may be chosen so $\det(\gamma_{p,q}) = 1$).
\end{mylemma}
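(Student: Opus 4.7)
The plan is to reduce modulo $\beta$ and work inside $\SL_2(\mz/\beta\mz)$. Since the principal congruence subgroup $\Gamma(\beta)$ lies in $\Gamma^0(\beta)$, the reduction map $\pi\colon\SL_2(\mz)\twoheadrightarrow\SL_2(\mz/\beta\mz)$ descends to a bijection $\SL_2(\mz)/\Gamma^0(\beta)\leftrightarrow\SL_2(\mz/\beta\mz)/B^-$, where $B^-=\pi(\Gamma^0(\beta))$ is the subgroup of lower-triangular matrices in $\SL_2(\mz/\beta\mz)$. I would then parametrize $\SL_2(\mz/\beta\mz)/B^-$ via the right-column map $g\mapsto ge_2$ with $e_2=(0,1)^t$: right multiplication by $(\begin{smallmatrix}a&0\\c&a^{-1}\end{smallmatrix})\in B^-$ scales $ge_2$ by the unit $a^{-1}$, yielding a bijection between $\SL_2(\mz/\beta\mz)/B^-$ and the primitive vectors $(b,d)^t\in(\mz/\beta\mz)^2$ modulo the scaling action of $(\mz/\beta\mz)^\times$.

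The key step is then to classify these orbits. The invariant $q:=\gcd(d,\beta)\mid\beta$ partitions them, and within each part I can normalize $d$ to equal $q$: writing $d\equiv qd_1\pmod\beta$ with $\gcd(d_1,\beta/q)=1$, this requires a unit $u\in(\mz/\beta\mz)^\times$ with $u\equiv d_1^{-1}\pmod{\beta/q}$, which exists because the reduction $(\mz/\beta\mz)^\times\twoheadrightarrow(\mz/(\beta/q)\mz)^\times$ is surjective. After normalization the residual stabilizer is $\{u\equiv 1\pmod{\beta/q}\}\cap(\mz/\beta\mz)^\times$; a short computation shows that the orbit of $(p,q)^t$ with $\gcd(p,q)=1$ (a constraint forced by primitivity) is exactly $\{(p',q)^t:p'\equiv p\pmod{\beta/q},\,\gcd(p',q)=1\}$. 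Thus each orbit is uniquely parametrized by a pair $(q,p\bmod\beta/q)$ with $q\mid\beta$ and $\gcd(p,q)=1$, and one lifts back to $\gamma_{p,q}\in\SL_2(\mz)$ with right column $(p,q)^t$ by using Bezout on $\gcd(p,q)=1$ to fill in the left column.

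The main technical point will be the residual-action calculation just mentioned: given $(p,q)^t$ and $(p',q)^t$ with $p\equiv p'\pmod{\beta/q}$ and $\gcd(p,q)=\gcd(p',q)=1$, one must verify that the scaling factor $u$ determined by $up\equiv p'\pmod\beta$ and $u\equiv 1\pmod{\beta/q}$ is automatically a unit modulo $\beta$. The key observation is that $u\equiv p'p^{-1}\pmod q$ is a unit mod $q$ while $u\equiv 1\pmod{\beta/q}$ is a unit mod $\beta/q$; since every prime dividing $\beta=q\cdot(\beta/q)$ divides $q$ or $\beta/q$, this forces $\gcd(u,\beta)=1$. The CRT compatibility condition $p\equiv p'\pmod{\gcd(q,\beta/q)}$, which is automatic from $p\equiv p'\pmod{\beta/q}$, ensures that such a $u$ exists modulo $\beta$ in the first place. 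A final sanity check on cardinality, $\sum_{q\mid\beta}\#\{p\bmod\beta/q:\exists\,\tilde{p}\in\mz\text{ with }\gcd(\tilde{p},q)=1\}=\beta\prod_{p\mid\beta}(1+1/p)=[\SL_2(\mz):\Gamma^0(\beta)]$, confirms the count.
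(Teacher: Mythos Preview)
Your proof is correct and takes a genuinely different route from the paper's. The paper argues directly: it checks that distinct $\gamma_{p,q}$ lie in distinct cosets by computing the upper-right entry of $\gamma_{p_2,q_2}^{-1}\gamma_{p_1,q_1}$ (which is $p_1 q_2 - p_2 q_1$), and shows covering by computing $\gamma_{p,q}^{-1}g$ for arbitrary $g=(\begin{smallmatrix}a&b\\c&d\end{smallmatrix})\in\SL_2(\mz)$, setting $q=\gcd(d,\beta)$, and solving $p\equiv b(d/q)^{-1}\pmod{\beta/q}$. The final step---that this residue class contains a representative with $\gcd(p,q)=1$---is left as an elementary exercise. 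Your approach instead passes to $\SL_2(\mz/\beta\mz)$, identifies the coset space with primitive vectors modulo unit scaling, and classifies orbits by normalizing the second coordinate. This is more structural and makes the role of the condition $\gcd(p,q)=1$ transparent (it is exactly primitivity of $(p,q)^t$ once $q\mid\beta$), and your residual-stabilizer computation effectively supplies the ``exercise'' the paper omits. The paper's argument is shorter and more self-contained; yours exposes more of the underlying structure and would generalize more readily to other congruence quotients. One small imprecision: in your cardinality check, the set $\{p\bmod\beta/q:\exists\,\tilde p\in\mz\text{ with }\gcd(\tilde p,q)=1\}$ should include the constraint $\tilde p\equiv p\pmod{\beta/q}$, and as written not every such residue class need contain a representative coprime to $q$ (precisely those $p$ with $\ell\nmid p$ for every prime $\ell\mid\gcd(q,\beta/q)$ do); but since you only reach these classes starting from primitive vectors, the orbits you produce are exactly the right ones, and the index count is correct.
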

Remark.  This is a small modification of \cite[Prop. 2.5]{IwaniecTopics}.
\begin{proof}
For $i=1,2$ let $\gamma_i = (\begin{smallmatrix} * & p_i \\ * & q_i \end{smallmatrix}) \in \mathrm{SL}_2(\mz)$ be of the stated form.  The condition that $\gamma_1 \Gamma^0(\beta) = \gamma_2 \Gamma^0(\beta)$ is $\gamma_2^{-1} \gamma_1 \in \Gamma^{0}(\beta)$.  The upper-right entry of $\gamma_2^{-1} \gamma_1$ is $p_1 q_2 - p_2 q_1$, which is $\equiv 0 \pmod{\beta}$ if and only if $q_1 = q_2$ and $p_1 \equiv p_2 \pmod{\beta/q_1}$.
Hence the $\gamma_{p,q}$'s represent distinct cosets.  
Next we show that these cosets cover $\SL_2(\mz)$.
For $g = (\begin{smallmatrix} a & b \\ c & d \end{smallmatrix}) \in \SL_2(\mz)$, let $q = \gcd(d, \beta)$.  Then $\gamma_{p,q}^{-1} g = (\begin{smallmatrix} * & bq-pd \\ * & * \end{smallmatrix})$.  The condition that $\gamma_{p,q}^{-1} g \in \Gamma^{0}(\beta)$ amounts to $bq \equiv pd \pmod{\beta}$, equivalently, $p \equiv b (\frac{d}{q})^{-1} \pmod{\beta/q}$.  
Finally, we claim that we may choose a representative of this $p$ modulo $\beta/q$ with $(p,q) = 1$.  This is an exercise in elementary number theory, which we omit. 
% which we abstract out as follows. Suppose $m \in \mz$ and $\ell, r \geq 1$ are given, with $(m, \ell) = 1$.  When does there exist $x \in \mz$ such that $x \equiv m \pmod{\ell}$ with $(x, q) = 1$?
% The answer is, always.  Say $\ell = \ell_1 \ell_2$ where $\ell_1 \mid q^{\infty}$ and $(\ell_2, m) = 1$.  By the Chinese remainder theorem, we seek a solution to the system $x \equiv m \pmod{\ell_1}$, $x \equiv m \pmod{\ell_2}$, and $(x, q) = 1$.
% We can then write $q = q_1 q_2$ where $q_1 \mid \ell_1^{\infty}$ and $(q_2, \ell_1) = 1$.  The condition $(x, q_2) = 1$ is free from $x \equiv m \pmod{\ell_1}$.  Then we have two separate conditions that $x \equiv m \pmod{\ell_2}$ and $(x, q_2) = 1$.  
\end{proof}

Combining Lemmas \ref{lemma:UandVclassification} and \ref{lemma:CosetRepresentativesGamma0beta}, for each $C \in \mathrm{M}_2(\mz)$ with $\gcd(c_1, c_2, c_3, c_4) = 1$ and $\det(C) = \beta > 0$, 
then $C = U^{-1} (\begin{smallmatrix} 1 & \\ & \beta \end{smallmatrix}) V^{-1}$ for some
$V$ of the form $\gamma_{p,q}$.  %One small loose end here is that $V \in \mathrm{GL}_2(\mz)$, but not necessarily $\mathrm{SL}_2(\mz)$, but if $\det(V) = -1$ then we can multiply $V$ on the right by $(\begin{smallmatrix} -1 & \\ & 1 \end{smallmatrix})$, which commutes with $(\begin{smallmatrix} 1 & \\ & \beta \end{smallmatrix})$, and absorb it into $U$.  Thus we may assume that $V \in \mathrm{SL}_2(\mz)$.  
The next lemma characterizes when $C$ has its $V$ in the coset $\gamma_{p,q} \Gamma^0(\beta)$.
\begin{mylemma}
\label{lemma:CUVpq}
Suppose $C = U^{-1} (\begin{smallmatrix} 1 & \\ & \beta \end{smallmatrix}) V^{-1}$ with $U \in \mathrm{GL}_2(\mz)$, 
$V \in \mathrm{SL}_2(\mz)$, 
and $\beta \geq 1$.  The condition that $V \in \gamma_{p,q} \Gamma^{0}(\beta)$ is determined from $C$ as follows.  First, $q = \gcd(c_1, c_3)$.  Supposing $r, s \in \mz$ are such that $\frac{c_1}{q} r + \frac{c_3}{q} s = 1$, then $p$ is uniquely determined $\pmod{\beta/q}$ by
\begin{equation*}
-p \equiv c_2 r + c_4 s \pmod{\beta/q}.
\end{equation*}
\end{mylemma}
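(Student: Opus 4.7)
The plan is to reduce to the clean case $V = \gamma_{p,q}$ using Lemma \ref{lemma:UandVclassification}, and then simply read off $p$ and $q$ from the two rows of $UC$.

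First I would invoke Lemma \ref{lemma:UandVclassification} to replace the given $V$ by the specific coset representative $\gamma_{p,q} = \bigl(\begin{smallmatrix} a & p \\ b & q \end{smallmatrix}\bigr)$, adjusting $U$ accordingly; this is legitimate since the coset $V \Gamma^{0}(\beta) = \gamma_{p,q}\Gamma^{0}(\beta)$ is an invariant of $C$. Computing $V^{-1} = \bigl(\begin{smallmatrix} q & -p \\ -b & a \end{smallmatrix}\bigr)$ and rearranging the defining relation yields
\[
UC = \begin{pmatrix} 1 & \\ & \beta \end{pmatrix} V^{-1} = \begin{pmatrix} q & -p \\ -\beta b & \beta a \end{pmatrix}.
\]

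From this identity, $q = \gcd(c_1, c_3)$ falls out by comparing first columns: $U(c_1, c_3)^t = (q, -\beta b)^t$, and unimodularity of $U$ preserves $\gcd$, so $\gcd(c_1, c_3) = \gcd(q, \beta b) = q$ (using $q \mid \beta$). For the formula for $p$, I would write the first row of $U$ as $(r, s)$; the first row of $UC$ then provides the two integer equations
\[
rc_1 + sc_3 = q \qquad \text{and} \qquad rc_2 + sc_4 = -p.
\]
The first, after dividing by $q$, identifies $(r, s)$ as a Bezout pair for $c_1/q$ and $c_3/q$ (matching the hypothesis), and the second is the sought congruence, holding in fact as an integer equality for this particular pair.

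The only remaining item is well-definedness of the residue class modulo $\beta/q$ under a different choice of Bezout pair, which is really the only step that warrants any care; I expect it to be routine. Replacing $(r, s)$ by $(r + \lambda c_3/q, s - \lambda c_1/q)$ changes $rc_2 + sc_4$ by $\lambda(c_2c_3 - c_1c_4)/q = -\lambda \beta/q$, which vanishes modulo $\beta/q$. The entire argument is essentially unimodular linear algebra together with the coset bookkeeping already supplied by Lemma \ref{lemma:UandVclassification}, so I do not anticipate any substantive obstacle.
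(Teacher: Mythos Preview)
Your proof is correct and follows essentially the same path as the paper: both invoke Lemma~\ref{lemma:UandVclassification} to reduce to the representative $V=\gamma_{p,q}$, write $UC = \bigl(\begin{smallmatrix} 1 & \\ & \beta \end{smallmatrix}\bigr)\gamma_{p,q}^{-1} = \bigl(\begin{smallmatrix} q & -p \\ * & * \end{smallmatrix}\bigr)$, and read off $q=\gcd(c_1,c_3)$ from the first column. The only tactical difference is in extracting $p$: you read it directly as the top-right entry via the first row of $U$ (and then verify independence of the Bezout pair), whereas the paper instead computes $\bigl(\begin{smallmatrix} 1 & \\ & \beta \end{smallmatrix}\bigr)\gamma_{p,q}^{-1} C^{-1}$ and imposes integrality of its top row to obtain two congruences for $p$, which are then combined using $r,s$; your route is a shade more direct (one minor remark: $\det C = \pm\beta$ since $U\in\GL_2(\mz)$, so your Bezout check yields $\pm\lambda\beta/q$ rather than $-\lambda\beta/q$, but of course this still vanishes modulo $\beta/q$).
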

\begin{proof}
By Lemma \ref{lemma:UandVclassification},
the condition that $V \in \gamma_{p,q} \Gamma^{0}(\beta)$ is equivalent to 
the existence of $U \in \mathrm{GL}_2(\mz)$ such that
$UC = (\begin{smallmatrix} 1 & \\ & \beta \end{smallmatrix}) \gamma_{p,q}^{-1} = (\begin{smallmatrix} q & -p \\ \beta x & \beta y \end{smallmatrix})$, for some $x, y \in \mz$ with $qy+px = 1$.  
Observe that left multiplication by $U$ does not change the gcd of the elements in a single column.  Noting that $\gcd(q,\beta x) = q$ since $\gcd(x,q) = 1$, then
this implies that $\gcd(c_1, c_3) = q$.
%Similarly, $\gcd(c_2, c_4) = \gcd(p, \beta y) = \gcd(p, \beta) = \gcd(p, \beta/q)$.  
Next we compute
\begin{equation*}
\begin{pmatrix} 1 & \\ & \beta \end{pmatrix} \gamma_{p,q}^{-1} C^{-1} = \pm \begin{pmatrix} q & - p \\ \beta x & \beta y \end{pmatrix} \begin{pmatrix} c_4/\beta & -c_2/\beta \\ -c_3/\beta & c_1/\beta \end{pmatrix} 
= \pm
\begin{pmatrix} 
\frac{qc_4 + p c_3}{\beta} & - \frac{(qc_2 + p c_1)}{\beta} \\ c_4x -c_3 y & c_1 y - c_2 x
\end{pmatrix}.
\end{equation*}
For this matrix to be in $\mathrm{GL}_2(\mz)$, it is both necessary and sufficient that $qc_4 +p c_3 \equiv 0 \pmod{\beta}$
and $qc_2 + pc_1 \equiv 0 \pmod{\beta}$.  Given that $q$ divides $c_1$ and $c_3$, these two congruences in turn are equivalent to $p \frac{c_3}{q} + c_4 \equiv 0 \pmod{\beta/q}$ and $p \frac{c_1}{q} + c_2 \equiv 0 \pmod{\beta/q}$.
Hence $p \frac{c_3}{q} s \equiv - c_4 s \pmod{\beta/q}$ and $p\frac{c_1}{q} r \equiv - c_2 r \pmod{\beta/q}$.  Adding the two congruences gives $p \equiv - c_2 r - c_4 s \pmod{\beta/q}$.
\end{proof}
As a direct consequence of Lemma \ref{lemma:CUVpq}, we may deduce the following.
\begin{mycoro}
\label{coro:tformula}
Suppose $UC V = (\begin{smallmatrix} 1 & \\ & \beta \end{smallmatrix})$, with $\beta \geq 1$, $U \in \mathrm{GL}_2(\mz)$, and $V \in \gamma_{p,q} \Gamma^{0}(\beta)$.
Let $T= (\begin{smallmatrix} t_1 & t_2/2 \\ t_2/2 & t_3 \end{smallmatrix}) \in \Lambda$.
Let $t$ be the lower-right entry of $V^t T V$.  Then
$t \equiv t_1 p^2 + t_2 pq + t_3 q^2 \pmod{\beta}$.  In particular,
\begin{equation*}
t \equiv t_1 (c_2 r + c_4 s)^2 - t_2 q (c_2 r + c_4 s) + t_3 q^2 \pmod{\beta},
\end{equation*}
where $q = \gcd(c_1, c_3)$ and $r, s \in \mz$ are such that $c_1 r + c_3 s = q$.
\end{mycoro}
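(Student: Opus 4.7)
The plan is a direct matrix computation. Since $T$ is symmetric with entries $(t_1, t_2/2; t_2/2, t_3)$, the lower-right entry of $V^t T V$ equals $(V e_2)^t T (V e_2) = t_1 v^2 + t_2 v w + t_3 w^2$, where $(v, w)^t := V e_2$. So it suffices to identify $V e_2 \bmod \beta$.

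To that end, I will use the coset decomposition $V = \gamma_{p,q} \eta$ with $\eta = (\begin{smallmatrix} a & b \beta \\ c & d \end{smallmatrix}) \in \Gamma^0(\beta)$. A one-line multiplication gives
\[
V e_2 = \gamma_{p, q} \begin{pmatrix} b\beta \\ d \end{pmatrix} \equiv d \begin{pmatrix} p \\ q \end{pmatrix} \pmod{\beta},
\]
since the $b\beta$ entry is killed mod $\beta$ and $\gamma_{p,q}$ has second column $(p, q)^t$. Note $d$ is a unit mod $\beta$ because $\det \eta = 1$ forces $a d \equiv 1 \pmod \beta$. For the canonical choice $V = \gamma_{p, q}$ (i.e., $\eta = I$, $d = 1$) we obtain $(v, w)^t = (p, q)^t$ exactly, and substituting into the quadratic form produces
\[
t \equiv t_1 p^2 + t_2 p q + t_3 q^2 \pmod{\beta},
\]
which is the first claim.

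The ``in particular'' statement follows by substituting the characterization $-p \equiv c_2 r + c_4 s \pmod{\beta/q}$ from Lemma \ref{lemma:CUVpq}. The middle term is unambiguous: multiplying the congruence by $q$ gives $pq \equiv -q(c_2 r + c_4 s) \pmod{\beta}$, so that $t_2 pq \equiv -t_2 q(c_2 r + c_4 s) \pmod{\beta}$. For the $t_1 p^2$ term I will use the specific lift $p = -(c_2 r + c_4 s)$ that is coherent with the canonical choice of $V$, yielding $p^2 = (c_2 r + c_4 s)^2$. Combined with the unchanged $t_3 q^2$ term, this produces the stated expression. There is no serious obstacle; the argument is elementary bookkeeping around the coset factorization, and the only subtlety---matching the lift of $p$ to the choice of coset representative $V$---is absorbed into the canonical normalization $V = \gamma_{p, q}$.
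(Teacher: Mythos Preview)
Your proof is correct and follows essentially the same approach as the paper, which simply states the result as ``a direct consequence of Lemma \ref{lemma:CUVpq}'' without further detail; you have supplied the underlying matrix computation. You correctly identify (and handle by passing to the canonical representative $V=\gamma_{p,q}$) the one subtlety that for a general $V$ in the coset the formula picks up an extraneous unit-squared factor $d^2$ modulo $\beta$, which is harmless since only $\gcd(t,\beta)$ is used downstream (cf.\ the remark after Theorem~\ref{thm.Bounding(t,C)}).
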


% \begin{mylemma}\label{lem.quadraticformDivisor0}
%     Let $T= (\begin{smallmatrix} t_1 & t_2/2 \\ t_2/2 & t_3 \end{smallmatrix}) \in \Lambda$, with $T \neq 0$. Write $T(x,y)=(x\, y) T (x\, y)^t= t_1x + t_2xy + t_3y$. Then for $X \geq 1$, we have
%     \begin{align}
%         R_{T,X}(0):=|\{(x,y)\in \mathbb{Z}^2:T(x,y)=0, |x| + |y| \leq X\}|\ll X\delta(-4\det(T)=\square).
%     \end{align}
% \end{mylemma}
% \begin{proof}
%     We first treat the case $t_1=t_3=0$. In such a case, we have $t_2\neq0$, $-4\det(T)=t_2^2$, and $T(x,y)=t_2xy$. Then either $x=0$ or $y=0$, and we have $R_{T,X}(0)\ll X$ in this case.

% Now we are left with the case where either $t_1\neq0$ or $t_3\neq0$. Since our treatment will be symmetric with respect to $t_1$ and $t_3$, we may assume that $t_1\neq 0$. Note that \begin{align}
%     4t_1T(x,y)=(2t_1 x + t_2 y)^2 + \Delta y^2 \quad \text{ where } \quad \Delta = 4t_1 t_3 - t_2^2.
% \end{align}
% If $T(x,y)=0$, then $$2t_1 x + t_2 y=\pm \sqrt{-\Delta} y.$$
% As a result, $R_{T,X}^*(0)=0$ unless $\Delta=-a^2$ for some integer $a\geq0$. In such a case, we have \begin{align}
%     R_{T,X}(0)\ll X.
% \end{align}

% \end{proof}

\section{Counting problems}
\label{section:countingproblems}
This self-contained section considers some counting problems that turn out to arise when estimating the Fourier coefficients of Poincare series using Kitaoka's bound.

\begin{mylemma}[Quadratic forms are nearly one-to-one]
\label{lem.quadraticformDivisor}
    Let $T= (\begin{smallmatrix} t_1 & t_2/2 \\ t_2/2 & t_3 \end{smallmatrix}) \in \Lambda$, with $\det(T) \neq 0$. Write $T(x,y)=(x\, y) T (x\, y)^t= t_1x^2 + t_2xy + t_3y^2$. Then for $X \geq 1$ and $n \neq 0$, we have
    \begin{align*}
        R_{T,X}(n):=|\{(x,y)\in \mathbb{Z}^2:T(x,y)=n, |x| + |y| \leq X\}|\ll (|n| X \|T\| )^\varepsilon,
    \end{align*}
    with an implied constant depending on $\varepsilon > 0$ only.
\end{mylemma}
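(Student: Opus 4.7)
The plan is to realize $R_{T,X}(n)$ as counting elements of a prescribed norm in the quadratic order $\mz[\sqrt{D}]$ associated to $T$, where $D = t_2^2 - 4 t_1 t_3 = -4 \det(T) \neq 0$, and bound this via the ideal theory of $\mathcal{O}_K$ for $K = \mq(\sqrt{D})$.

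First I would dispose of the degenerate case $t_1 = 0$ (and symmetrically $t_3 = 0$): then $T(x,y) = y(t_2 x + t_3 y)$ with $t_2 \neq 0$ forced by $\det T \neq 0$, so $y \mid n$ contributes $\ll d(|n|)$ choices, each determining $x$. Assuming $t_1 \neq 0$, completing the square yields $4 t_1 T(x,y) = (2 t_1 x + t_2 y)^2 - D y^2$. The injective change of variables $(x,y) \mapsto (u,v) = (2 t_1 x + t_2 y, y)$ over $\mz$ gives
\[
R_{T,X}(n) \leq \#\{(u,v) \in \mz^2 : u^2 - D v^2 = 4 t_1 n,\; |v| \leq X,\; |u| \leq 3\|T\|X\}.
\]
Each such $(u,v)$ identifies with $\alpha = u + v\sqrt{D} \in \mz[\sqrt{D}] \subseteq \mathcal{O}_K$, and the principal ideal $\alpha \mathcal{O}_K$ has norm $|4 t_1 n|$.

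By unique ideal factorization in $\mathcal{O}_K$ together with the three standard splitting behaviors for rational primes, the number of integral ideals of $\mathcal{O}_K$ with a given norm $M$ is at most $d(M)$. Hence there are $\ll d(4|t_1 n|) \ll (|n|\|T\|)^\varepsilon$ ideals to consider, and I reduce to counting generators of each such ideal that lie in the prescribed box. In the definite case $D < 0$, $|\mathcal{O}_K^\times| \leq 6$ gives a uniform bound on generators. In the indefinite case $D > 0$, the generators form a single orbit $\{\pm \epsilon^k \beta\}_{k \in \mz}$ under the fundamental unit $\epsilon > 1$ of $\mathcal{O}_K$, and the elementary bound $\epsilon \geq (1+\sqrt{5})/2$ holds uniformly in $D$. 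Combining the box constraint with $|\bar\epsilon| = 1/|\epsilon|$ confines $k$ to an interval of length $\ll \log(\|T\|X)$, which is $(X\|T\|)^\varepsilon$. Multiplying the ideal count by the per-ideal count yields the claimed bound.

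I expect the indefinite case to be the main obstacle: one must verify that the box on $(u,v)$ truly cuts out only polylogarithmically many elements from each infinite unit orbit, which rests on the universal lower bound on real quadratic fundamental units. A minor subtlety is that $\mz[\sqrt{D}]$ may be a proper suborder of $\mathcal{O}_K$, but since we only need an upper bound and $\mz[\sqrt{D}] \subseteq \mathcal{O}_K$, counting in the maximal order is sufficient.
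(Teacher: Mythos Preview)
Your approach is essentially the same as the paper's: pass to the quadratic field attached to the discriminant, bound the number of ideals of the relevant norm by a divisor function, and then bound the number of generators per ideal via a uniform lower bound on the fundamental unit in the indefinite case. The paper phrases things slightly differently (it counts ideals \emph{dividing} $(4t_1 n)$ rather than ideals of norm $|4t_1 n|$, and uses the cruder bound $d(4t_1 n)^2$), but the substance is identical.

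There is one small gap you should patch. Your argument assumes that $K = \mq(\sqrt{D})$ is a genuine quadratic field, but if $D = t_2^2 - 4t_1 t_3$ happens to be a nonzero perfect square then $K = \mq$, the map $(u,v) \mapsto u + v\sqrt{D}$ is no longer injective, and the ideal/unit framework collapses. The paper singles this case out (in its notation, $\Delta' = -1$): when $D = m^2$ one has the rational factorization $4t_1 n = (u - mv)(u + mv)$, so the divisor bound $d(|4t_1 n|) \ll (|n|\|T\|)^\varepsilon$ finishes it directly. Once you insert this one-line case, your proof is complete and matches the paper's.
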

\begin{proof}
% Since $T$ is positive definite, the discriminant is equal to $t_2^2-4t_1t_3=-4\det(T)<0$. Define \begin{align}
%         R^*(n):=\{(x,y)\in \mathbb{Z}^2:(x,y)=1, T(x,y)=n\}.
%     \end{align}
% We claim that \begin{align}
%     |R^*(n)|\ll n^\varepsilon.
% \end{align}
% With this claim, we have \begin{align}
%         |\{(x,y)\in \mathbb{Z}^2:T(x,y)=n\}|\leq \sum_{a^2|n}|R^*(n/a^2)|\ll n^\varepsilon,
%     \end{align}
%     which concludes the proof.

% The claim follows from standard number theory results. For completeness, we follow \cite[Ch 11, Thm 4.1, 4.3]{Hua's intro to number theory} and illustrate the proof here.

% Let $(x,y)\in R^*$, then there exists $r_0,s_0$ such that $xs_0-yr_0=1$. Then all solutions $(r,s)$ to the equation $xs-yr=1$ are given by $r=r_0+hx$ and $s=s_0+hy$ for any integer $h$. Let \begin{align}
%     \ell=&\ (2t_1x+t_2y)r+(t_2x+2t_3y)s=(2t_1x+t_2y)r_0+(t_2x+2t_3y)s_0+2hT(x,y)\nonumber\\
%     =&\ (2t_1x+t_2y)r_0+(t_2x+2t_3y)s_0+2hn.
% \end{align}
% Choose $h$ so that $0\leq \ell < 2n$. On the other hand, we have \begin{align}
% l^2 & =[(2 t_1 x+t_2 y) r+(t_2 x+2 t_3 y) s]^2 \nonumber\\
% & =4\left(t_1 r^2+t_2 r s+t_3 s^2\right)T(x,y)+\left(t_2^2-4 t_1 t_3\right)(x s-y r)^2 \nonumber\\
% & \equiv -4\det(T)  \ \shortmod{4n} .
% \end{align}
% In other words, for each $(x,y)\in R^*$, we have associated an integer $0\leq \ell <2n$ such that $\ell^2\equiv -4\det(T)\mod{4n}$. This implies that there are at most $\ll 2^{\omega(n)}$ many distinct $\ell$.

We first treat the case $t_1=t_3=0$. Since $\det(T) \neq 0$, then $t_2\neq0$ and $T(x,y)=t_2xy$. 
The standard divisor function bound implies $R_{T,X}(n) \leq 2 d(n) \ll |n|^{\varepsilon}$.
% \begin{align}
%     R_{T,X}(n)=|\{(x,y)\in \mathbb{Z}^2:t_2xy=n, |x| + |y| \leq X\}|\ll |n|^\varepsilon.
% \end{align}

Now assume $t_1 \neq 0$ or $t_3 \neq 0$. Say by symmetry that $t_1\neq 0$. Note that \begin{align*}
    4t_1T(x,y)=(2t_1 x + t_2 y)^2 + \Delta y^2 \quad \text{ where } \quad \Delta = 4t_1 t_3 - t_2^2.
\end{align*}
Write $\Delta=\Delta_0^2\Delta'$ with $\Delta'$ square-free. If $\Delta'=-1$, then we have the factorization
\begin{align*}
    4t_1T(x,y)=(2t_1 x + t_2 y + \Delta_0 y)(2t_1 x + t_2 y - \Delta_0 y).
\end{align*}
By the standard divisor function bound, this implies
\begin{align*}
    R_{T,X}(n)\ll (|t_1|n)^\varepsilon.
\end{align*}

Now assume $\Delta'\neq-1$. Let $K=\mathbb{Q}(\sqrt{-\Delta'})$, with ring of integers $\mathcal{O}_K$ and unit group $U_K$.  We need to count the number of pairs $(x,y) \in \mz^2$ with $|x| + |y| \leq X$, and with
\begin{equation*}
    4t_1 n = 4 t_1 T(x,y) = (2 t_1 x + t_2 y + \sqrt{-\Delta}  y)(2t_1 x + t_2 - \sqrt{-\Delta}  y).
\end{equation*}
The number of integral ideals dividing $(4 t_1 n)$ in $\mathcal{O}_K$ is $O(|t_1 n|^{\varepsilon})$, uniformly in $\Delta$ (indeed, the number of integral ideals is at most $d(4 t_1 n)^2$).  %Using the factorization
%Since $[K:\mathbb{Q}]=2$, the number of distinct prime ideals dividing $(4t_1n)$ is bounded by $\leq 2^{\omega(4t_1n)}\ll (|t_1n|)^\varepsilon$, where $\omega(4t_1 n)$ is the number of distinct prime factors of $4t_1n$. Factorizing 
% \begin{align}
%     (2t_1 x + t_2 y)^2 + \Delta y^2= (2t_1 x + t_2 y + \sqrt{-\Delta} y)(2t_1 x + t_2 y - \sqrt{-\Delta} y),
% \end{align}
Hence the number of principal ideals of the form $(2 t_1 x + t_2 y + \sqrt{-\Delta}  y)$ dividing $(4t_1 n)$ is at most $O(|t_1 n|^{\varepsilon})$.

If $(2t_1 x_1 + t_2 y_1 + \sqrt{-\Delta} y_1) \mathcal{O}_K = (2t_1 x_2 + t_2 y_2 + \sqrt{-\Delta} y_2) \mathcal{O}_K$ then
\begin{equation}
\label{eq:vunitformula}
 v:=   
 \frac{2t_1 x_1 + t_2 y_1 + \sqrt{-\Delta} y_1}{2t_1 x_2 + t_2 y_2 + \sqrt{-\Delta} y_2} \ll \frac{ \|T\|^2 X^2}{4 |t_1 n|},
% \frac{2t_1 x_1 + t_2 y_1 + \sqrt{-\Delta} y_1}{2t_1 x_2 + t_2 y_2 + \sqrt{-\Delta} y_2} \frac{2t_1 x_1 + t_2 y_1 + \sqrt{-\Delta} y_1}{2t_1 x_2 + t_2 y_2 + \sqrt{-\Delta} y_2}
\end{equation}
where $v \in U_K$.
If $\Delta>0$, then $|U_K|\leq 6$ and this concludes the proof of the lemma for this case.
If $\Delta'<-1$, then $U_K$ is given by 
\begin{align*}
    U_K=\{\pm u^j\} \quad \text{ with }\quad u=\tfrac{a+b\sqrt{D}}{2},
\end{align*}
where $$D=\begin{cases}
    -\Delta' & \text{ if } \Delta'\equiv 3\mod 4\\ -4\Delta' & \text{ if } \Delta'\equiv 1, 2\mod 4,
\end{cases}$$ 
and $a, b>0$ is the smallest integral solution to $x^2-Dy^2=\pm 4$. Since $a,b\geq1$ and $D\geq 3$, we have $u \geq \frac{1 + \sqrt{3}}{2} > 1.366\dots$.
. Hence the number of possible $v \geq 1$ satisfying \eqref{eq:vunitformula} is $\ll (\|T \| X)^{\varepsilon}$, uniformly in $\Delta$.  By symmetry, we may assume $v \geq 1$, which completes the proof.
\end{proof}

\begin{mytheo}\label{thm.Bounding(t,C)}
Let $T= (\begin{smallmatrix} t_1 & t_2/2 \\ t_2/2 & t_3 \end{smallmatrix}) \in \Lambda$, with $T \neq 0$.  Let $t = t(C, T)$ be the lower-right entry of $V^t T V$, where $UCV = (\begin{smallmatrix} 1 & \\ & \beta \end{smallmatrix})$ with $U \in \mathrm{GL}_2(\mz)$, $V \in \mathrm{SL}_2(\mz)$, and $\beta \geq 1$.  Let $1 \leq W \ll X^2$.  
Then
\begin{equation}
\label{eq:Nbound}
\mathcal{N}(X, W, T) :=\sum_{\substack{(c_1, c_2, c_3, c_4) = 1 \\ c_1^2 + c_2^2 + c_3^2 + c_4^2 \ll X^2 \\ 0 \neq |\det(C)| \ll W }}
\gcd(t(C, T), \det(C))^{1/2} \ll
\|T\|^{\varepsilon}
W X^{2+\varepsilon},
\end{equation}
where the implied constant depends on $\varepsilon > 0$ only.
\end{mytheo}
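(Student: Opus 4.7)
The approach combines Corollary \ref{coro:tformula}, Lemma \ref{lem.quadraticformDivisor}, and a dyadic decomposition. By the corollary, $\gcd(t(C,T), \beta)$ equals $\gcd(T(p,q), \beta)$, where $q = \gcd(c_1, c_3)$ and $p \pmod{\beta/q}$ is specified by Lemma \ref{lemma:CUVpq}. The sum thus rewrites as $\mathcal{N}(X,W,T) = \sum_C \gcd(T(p,q), \beta)^{1/2}$, and applying a dyadic decomposition on the size of this gcd reduces the problem to showing
\[
D^{1/2} \cdot \mathcal{N}_D \ll \|T\|^\varepsilon W X^{2+\varepsilon} \quad \text{for every dyadic } D \text{ with } 1 \leq D \ll W,
\]
where $\mathcal{N}_D := \#\{C : D \mid \gcd(T(p,q), \beta)\}$ (subject to the size and primitivity constraints in the theorem).

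To bound $\mathcal{N}_D$, I would parametrize $C$ by $(q, c_1', c_3', c_2, c_4)$ with $c_1 = qc_1'$, $c_3 = qc_3'$, and $(c_1', c_3') = 1$. After choosing a Bezout pair $(r,s)$ with $c_1' r + c_3' s = 1$, apply the unimodular substitution $(c_2, c_4) \leftrightarrow (a, b)$ given by $a = r c_2 + s c_4$, $b = c_1' c_4 - c_3' c_2$. This satisfies $\det C = qb$ and $T(-a, q) \equiv T(p, q) \pmod{|b|}$, so the condition $D \mid \gcd(T(p,q), \beta)$ splits into $D/\gcd(D, q) \mid b$ and a divisibility of $T(-a, q)$ modulo an appropriate divisor of $D$. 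The size constraints $|c_2|, |c_4| \ll X$ translate into $(a,b)$ lying in a parallelogram of area $\ll X^2$; intersecting with $|b| \leq W/q$ restricts the region further. For each fixed $(q, c_1', c_3')$, counting $b$ in arithmetic progression gives $\ll 1 + W\gcd(D,q)/(qD)$ choices, while for the $a$-count I would sum over the potential integer values $n = T(-a, q)$ and invoke Lemma \ref{lem.quadraticformDivisor}: each $n$ is attained by at most $O((|n|X\|T\|)^\varepsilon)$ pairs, which controls the contribution from the divisibility $D \mid T(-a, q)$.

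The main obstacle is tracking the arithmetic interaction between $D$ and $q$: the common factor $\gcd(D, q)$ governs how the two divisibility conditions split into constraints on $a$ and $b$, and the effective modulus in the quadratic congruence on $a$ varies accordingly. Additional care is needed in the degenerate cases where $T(-a, q) = 0$ (not directly covered by Lemma \ref{lem.quadraticformDivisor}, but handled via the factorization arguments from its proof) and where $c_1' c_3' = 0$ (a lower-dimensional stratum treated separately). Assembling all pieces and summing over $q$, $(c_1', c_3')$, and dyadic $D$, a calculation should yield $\mathcal{N}_D \ll \|T\|^\varepsilon W X^{2+\varepsilon}/D^{2-\varepsilon}$, so that the dyadic sum $\sum_D D^{1/2} \mathcal{N}_D$ converges geometrically to the target bound.
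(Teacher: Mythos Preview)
Your overall architecture—Corollary~\ref{coro:tformula} followed by a divisor-type decomposition and Lemma~\ref{lem.quadraticformDivisor}—matches the paper, but the execution diverges in a way that creates a real gap. In your change of variables you fix $(q,c_1',c_3')$ and then apply Lemma~\ref{lem.quadraticformDivisor} to the map $a\mapsto T(-a,q)$. With $q$ frozen this is a \emph{one}-variable quadratic, so the lemma gives nothing beyond the trivial ``$\leq 2$ preimages per value''; the saving you can extract from $D\mid T(-a,q)$ is then governed by the number of roots of a quadratic congruence $\pmod D$, which in general is only $O(D^{\varepsilon}\sqrt{\gcd(D,q^2\det T)})$ and can be as large as $D$ when $\det T=0$ or when $q$ and $D$ share large factors. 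Combined with the $b$-saving of $D/(D,q)$, this does not yield your claimed $\mathcal{N}_D\ll WX^{2+\varepsilon}/D^{2-\varepsilon}$; the true saving is only just enough for the dyadic sum to close up to $X^{\varepsilon}$, and getting even that requires carefully disentangling the $q$-part of the modulus. A second, related issue is the $a$-range: for fixed $(c_1',c_3',b)$ the variable $a$ lies in an interval of length $\asymp X/\max(|c_1'|,|c_3'|)$ centred at a point that can be as large as $X^2/q$, so the ``range of $T(-a,q)$ divided by $D$'' count is not straightforwardly $\ll \|T\|X^2/D$.

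The paper avoids both problems by organising the sum differently. It does not pass to $(a,b)$; instead it keeps $(c_1',c_2,c_3',c_4)$, separates the modulus into a $q$-part $v\mid q^\infty$ and a coprime part $d$, and then observes (via $\alpha\equiv c_2\overline{c_1'}\pmod{d_2}$) that the quadratic congruence becomes $T(-c_2,qc_1')\equiv 0\pmod{d_2}$ in the \emph{two} variables $c_1',c_2$. It sums over $(c_3',c_4)$ first (the $\mathcal{M}_1$ estimate, using that $\det(C')\equiv 0$ lies in a progression), and only then invokes Lemma~\ref{lem.quadraticformDivisor} in the genuine two-variable form on $(c_1',c_2)$. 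This two-variable application is where the full saving comes from, and it is precisely what your one-variable $(a,q\ \text{fixed})$ setup forfeits. If you want to salvage your route, you would need to postpone fixing $c_1',c_3'$ and find a pair of free variables on which the quadratic form acts nondegenerately—which essentially leads back to the paper's arrangement.
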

Remark.  The quantity $t=t(C,T)$ technically depends on $V$, not just $C$, but $\gcd(t, \det(C))$ does not depend on $V$, so $\mathcal{N}$ is well-defined.
\begin{proof}
Write $\mathcal{N} = \mathcal{N}_0 + \mathcal{N}'$ where $\mathcal{N}_0$ corresponds to the sub-sum with some $c_i =0$, and where $\mathcal{N}'$ corresponds to the sub-sum with all $c_i \neq 0$.
We first estimate $\mathcal{N}_0$.  Say by symmetry $c_1 = 0$.  Then $|\det(C)| = |c_2 c_3|$, $(\gcd(t, \det(C))^{1/2} \leq |\det(C)|^{1/2} \ll W^{1/2} \ll X$ and the contribution to $\mathcal{N}_0$ from such $C$'s is
\begin{equation*}
    \ll
    W^{1/2} \sum_{\substack{1 \leq |c_2 c_3| \ll W \\ c_4 \ll X}} 1 \ll W^{3/2} X \log X \ll W X^{2+\varepsilon}.
\end{equation*}

Motivated by later analysis, we subdivide further $\mathcal{N}' = \mathcal{N}'_0 + \mathcal{N}''$, where $\mathcal{N}'_0$ corresponds to the sub-sum of terms with $t_1 c_2^2 - t_2 c_1 c_2 + t_3 c_1^2 = 0$, and $\mathcal{N}''$ corresponds to the terms with
\begin{equation}
\label{eq:quadraticformnonvanishingcondition}
t_1 c_2^2 - t_2 c_1 c_2 + t_3 c_1^2 \neq 0.
\end{equation}
Note that $\mathcal{N}_0'$ is empty unless $\Delta:=4\det(T) = -\square$.  The solutions $(c_1, c_2) \in \mz^2$ to $t_1 c_2^2 - t_2 c_1 c_2 + t_3 c_1^2=0$ lie along the union of two rational lines through the origin.  
Specifically, if $t_1 \neq 0$, the lines are $c_2 = c_1 \frac{t_2 \pm \sqrt{-\Delta}}{2t_1}$.  If $t_1 = 0$, then since $c_1 \neq 0$ by assumption, then the solutions lie on the line $t_3 c_1 = c_2$, which only has solutions if $t_3 \neq 0$, since $c_2 \neq 0$ by assumption.
Consider a line of the form $c_1 = a c_2$ for some $a \in \mq$, $a \neq 0$.  
Let $g = \gcd(c_1, c_2)$ and write $c_i = g u_i$ where $u_1$ and $u_2$ are uniquely determined from $a$ (up to $\pm$ sign), and $\gcd(u_1, u_2) = 1$.  The contribution to $\mathcal{N}_0'$ from this line is
\begin{equation}
\label{eq:N0'boundOneLine}
\ll W^{1/2}
\sum_{g \ll W}
\sum_{\substack{c_3, c_4 \ll X \\ 1 \leq |u_1 c_4 - u_2 c_3| \ll W/g}} 1
= W^{1/2}
\sum_{g \ll W}
\sum_{1 \leq |m| \ll W/g} 
\sum_{\substack{c_3, c_4 \ll X \\ u_1 c_4 - u_2 c_3 = m}} 1.
\end{equation}
By elementary number theory, if $c_3^{(0)}$ and $c_4^{(0)}$ satisfy $u_1 c_4^{(0)} - u_2 c_3^{(0)} = m$, then all solutions to $u_1 c_4 - u_2 c_3 = m$ take the form $c_3 = c_3^{(0)} + k u_1$, $c_4 = c_4^{(0)} + k u_2$, with $k \in \mz$.
Hence 
$| \{  |c_3|, |c_4| \ll X : u_1 c_4 - u_2 c_3 = m \}| \ll X$, with an implied constant uniform in $T$.  Therefore,
\eqref{eq:N0'boundOneLine} is
\begin{equation*}
    \ll W^{1/2}
\sum_{g \ll W}
\sum_{1 \leq |m| \ll W/g} X \ll W^{3/2} X \log X \ll W X^{2+\varepsilon}.
\end{equation*}

Now we turn to $\mathcal{N}''$.  
%To simplify the notation, we will not explicitly display that the $c_i$ are nonzero and satisfy \eqref{eq:quadraticformnonvanishingcondition}, and rather leave these conditions implicit.
By Corollary \ref{coro:tformula}, $t(C,T) \equiv t_1 (c_2 r + c_4 s)^2 - t_2 q (c_2 r + c_4 s) + t_3 q^2 \pmod{|\det(C)|}$,
where $q = (c_1, c_3)$, and where $c_1 r + c_3 s = q$.  We decompose the sum in $\mathcal{N}''$ according to the value of $q$, and write $c_1 = q c_1'$ and $c_3 = q c_3'$, and formally let $C' = (\begin{smallmatrix} c_1' & c_2 \\ c_3' & c_4 \end{smallmatrix})$, so that $q \det(C') = \det(C)$.
This gives
\begin{equation*}
\mathcal{N}''(X,W, T)
\leq
\sum_{q \ll X} 
\sum_{\substack{(c_1', c_3') = 1 \\ 1 \leq |c_1'|, |c_3'| \ll X/q}}
\sum_{\substack{(c_2, c_4, q) = 1 \\  1 \leq |c_2|, |c_4| \ll X  \\ 0 \neq |\det(C')| \ll W/q \\ \eqref{eq:quadraticformnonvanishingcondition} \text{ holds}}} \gcd(t(C,T), q \det(C'))^{1/2}.
\end{equation*}
For each $C'$, we factor $\det(C') = \gamma_q \gamma''$ where $\gamma_q \mid q^{\infty}$ and $(\gamma'', q) = 1$.  Then
\begin{align*}
(t(C,T), q |\det(C')|)^{1/2} 
%&\leq (t(C,T), q \gamma_q)^{1/2} (t(C,T), \gamma'')^{1/2}
%\\
\leq q^{1/2} \gamma_q^{1/2} (t(C,T), \gamma'')^{1/2}.
\end{align*}
%Recall that $t(C,T) \equiv t_1 p^2 \pmod{q}$, where $\gcd(p,q) = 1$, so that $(t(C,T), q) = (t_1,q)$.  
Hence
\begin{align*}
\mathcal{N}''(X,W, T)
\leq
\sum_{q \ll X} 
q^{1/2}
\sum_{\substack{(c_1', c_3') = 1 \\ 1 \leq |c_1'|, |c_3'| \ll X/q}}
\sum_{\substack{(c_2, c_4, q) = 1 \\  1 \leq |c_2|, |c_4| \ll X \\ 0 \neq \det(C') \ll W/q \\ \eqref{eq:quadraticformnonvanishingcondition} \text{ holds}}}
\gamma_q^{1/2} 
\gcd(t(C,T), \gamma'')^{1/2}
\\
\leq 
\mathop{\sum_{(d,q) = 1} \sum_{v|q^\infty}}_{qdv\ll W}(qdv)^{1/2}
\sum_{\substack{\alpha \shortmod{d} \\ t_1 \alpha^2 - t_2 q \alpha + t_3 q^2 \equiv 0 \shortmod{d}}}
\sum_{\substack{(c_1', c_3') = 1 \\ 1 \leq |c_1'|, |c_3'| \ll X/q}}
\sum_{\substack{(c_2, c_4, q) = 1 \\  1 \leq |c_2|, |c_4| \ll X \\ 0 \neq \det(C') \ll W/q \\  \det(C') \equiv 0 \shortmod{dv} \\ c_2 r + c_4 s \equiv \alpha \shortmod{d} \\ \eqref{eq:quadraticformnonvanishingcondition} \text{ holds}}}
1.
\end{align*}
Given $c_1'$ and $c_3'$, the pair of congruences defining $c_2$ and $c_4$ can be expressed as
\begin{equation*}
\begin{pmatrix} c_1' & -c_3' \\ s & r \end{pmatrix} \begin{pmatrix} c_4 \\ c_2 \end{pmatrix}
\equiv \begin{pmatrix} 0 \\ \alpha \end{pmatrix} \pmod{d}
\Longleftrightarrow
 \begin{pmatrix} c_4 \\ c_2 \end{pmatrix}
\equiv \begin{pmatrix} c_3' \alpha \\ c_1' \alpha \end{pmatrix} \pmod{d},
\end{equation*}
since $(\begin{smallmatrix} c_1' & -c_3' \\ s & r \end{smallmatrix}) \in \mathrm{SL}_2(\mz)$.
% Hence $\mathcal{N}''(X,W,T)$ is bounded by
% \begin{equation}
% \mathop{\sum_{(d,q) = 1} \sum_{v|q^\infty}}_{qdv\ll W}(qdv)^{1/2}
% \sum_{\substack{\alpha \shortmod{d} \\ t_1 \alpha^2 - t_2 q \alpha + t_3 q^2 \equiv 0 \shortmod{d}}}
% \sum_{\substack{(c_1', c_3') = 1 \\ 1 \leq |c_1'|, |c_3'| \ll X/q}}
% \sum_{\substack{(c_2, c_4, q) = 1 \\  1 \leq |c_2|, |c_4| \ll X, 
% 0 \neq \det(C') \ll W/q \\ \det(C') \equiv 0 \shortmod{v} \\
% c_2 \equiv c_1' \alpha \shortmod{d} \\ c_4 \equiv c_3' \alpha \shortmod{d} \\ \eqref{eq:quadraticformnonvanishingcondition} \text{ holds}}} 
% 1.
% \end{equation}
% At this stage, we would like to separate the conditions between $(c_1',c_2)$ and $(c_3',c_4)$. Now for any given $0\leq \alpha<d$, since $c_1'c_4\neq c_2c_3'$, either $c_2\neq c_1'\alpha$ or $c_4\neq c_3'\alpha$. By symmetry, we have \begin{align}
%     \mathcal{N}'(X,T)\ll \sum_{q \ll X} 
% (t_1, q)^{1/2}
% \sum_{(d,q) = 1} \sum_{r|q^\infty}(dr)^{1/2}
% \sum_{\substack{\alpha \shortmod{d} \\ t_1 \alpha^2 - t_2 q \alpha + t_3 q^2 \equiv 0 \shortmod{d}}}
% \sum_{\substack{(c_1', c_3') = 1 \\ c_1',c_3' \ll X/q}}
% \sum_{\substack{(c_2, c_4, q) = 1 \\  c_2, c_4 \ll X, 
% r|c_1'c_4 -c_2c_3'\neq0 \\
% c_2 \equiv c_1' \alpha \shortmod{d} \\ c_4 \equiv c_3' \alpha \shortmod{d}\\ c_4\neq c_3'\alpha}} 
% 1.
% \end{align}
Next we write $d = d_0 d_1d_2$ where $(c_1',d)=d_1$, $d_0|d_1^\infty$, and $(d_1, d_2) = 1$. 
The various congruences in the sums then split into systems of congruences modulo $d_0 d_1$ and $d_2$, which we analyze separately.  

First consider the congruences modulo $d_0 d_1$.
Note that $d_1| c_2$ from the congruence $c_2 \equiv c_1' \alpha \pmod{d}$.  In addition, the condition $(c_1', c_3') = 1$ implies that $(c_3', d_0 d_1) = 1$ so that $\alpha \equiv c_4 \overline{c_3'} \pmod{d_0 d_1}$, which uniquely determines $\alpha \pmod{d_0 d_1}$ given $c_3'$ and $c_4$.  %Substituting this into $c_2 \equiv c_1' \alpha \pmod{d_0 d_1}$, this implies that 
Of course,
$\det(C') \equiv 0 \pmod{d_0 d_1}$.  By positivity, we ignore any extra information one might extract from the quadratic congruence in $\alpha$ modulo $d_0 d_1$.

Next consider the congruences modulo $d_2$.  
Here $c_2 \equiv c_1' \alpha \pmod{d_2}$ is uniquely solvable 
in terms of $\alpha$
via 
$\alpha \equiv c_2 \overline{c_1'} \pmod{d_2}$.
%$\alpha \equiv (c_2/d_1) \overline{(c_1'/d_1)} \pmod{d_2}$ 
%and $d_1 | (c_1', c_2)$. 
(Substituting this into the congruence on $c_3',c_4$ simply records  $\det(C') \equiv 0 \pmod{d_2}$). Finally, the quadratic congruence on $\alpha$ modulo $d_2$ then becomes $t_1 c_2^2 - t_2 c_2 (c_1' q) + t_3 (c_1' q)^2 \equiv 0 \pmod{d_2}$. 

Inserting this information into $\mathcal{N}''$, we obtain
\begin{equation}
\label{eq:NboundMiddleofProof}
\mathcal{N}''(X,W,T) \ll 
\sum_{\substack{(d,q) = 1, v|q^\infty \\ qdv\ll W \\ d_0 d_1 d_2 = d \\ d_0 | d_1^{\infty}, (d_1, d_2) = 1}}(qdv)^{1/2} 
\mathcal{M}(X, W, T, d, q),
\end{equation}
where
\begin{equation*}
    \mathcal{M}(X, W, T, d, q)=
\sum_{\substack{(c_1', c_3') = 1 \\ (c_1',d)=d_1\\ 1 \leq |c_1'|,  |c_3'| \ll X/q}}
\sum_{\substack{(c_2, c_4, q) = 1 \\ 1\leq |c_2|, |c_4| \ll X,  0 \neq \det(C') \ll W/q\\ \det(C') \equiv 0 \shortmod{v d} \\
0 \neq t_1 c_2^2 - t_2 c_2 (c_1' q) + t_3 (c_1' q)^2 \equiv 0 \shortmod{d_2} \\ d_1|  c_2 }} 
1.
\end{equation*}

Our final major task is to show
\begin{equation}
\label{eq:Mbound}
\mathcal{M}(X, W, T, d, q)
\ll \frac{W}{qvd} \frac{X^2}{q d_1 d_2} (\prod_{p^j || d_2} p^{\lfloor j/2 \rfloor}) (\|T \| Xdq )^{\varepsilon}.
\end{equation}
It is easy to check that \eqref{eq:Mbound}, when applied into \eqref{eq:NboundMiddleofProof}, gives a bound consistent with \eqref{eq:Nbound}, which will then complete the proof of Theorem \ref{thm.Bounding(t,C)}.

Now we turn to proving \eqref{eq:Mbound}.
Our first step is to
 take $c_1'$ and $c_2$ to the outside, and sum over $c_3'$ and $c_4$ first.  
 Along the way, we mildly simplify the expression by
dropping the condition $(c_2, c_4, q) = 1$ and relaxing the condition $(c_1', d) = d_1$ to say $d_1 | c_1'$.  This gives
\begin{equation*}
\mathcal{M}(X, W, T, d, q)
\leq 
\sum_{\substack{1\leq |c_1'| \ll X/q, 1 \leq |c_2| \ll X \\
d_1 \mid (c_1', c_2) \\ 0 \neq t_1 c_2^2 - t_2 c_2 (c_1' q) + t_3 (c_1' q)^2 \equiv 0 \shortmod{d_2} }
}
\mathcal{M}_1,
\end{equation*}
where
\begin{equation}
\label{c_3c_4sum}
    \mathcal{M}_1 = 
     \mathcal{M}_1(X,W,T,d,q,c_1', c_2) = 
    \sum_{\substack{1 \leq |c_3'| \ll X/q,  1\leq |c_4| \ll X \\ c_1' c_4 - c_3' c_2 \equiv 0 \shortmod{vd} \\ 0 < |c_1' c_4 - c_3' c_2| \ll W/q}} 1.
\end{equation}
We claim that
\begin{equation}
\label{eq:M1claimedbound}
    \mathcal{M}_1 \ll \frac{W}{qv d} \Big(1 + X\min\Big(\frac{(c_1', c_2)}{c_2}, \frac{(c_1', c_2)}{c_1' q} \Big)\Big).
\end{equation}
Supposing that $c_1' c_4 - c_3' c_2 = \det(C') = n$ is fixed, then $c_1'y-c_2x=n$ has a solution $(x,y) \in \mz^2$ only if $(c_1',c_2)|n$. Now suppose that
$c_1' y_0 - c_2 x_0 = n$, for some $y_0 \ll X$ and $x_0 \ll X/q$.  The set of all solutions to $c_1' y - c_2 x = n$ are of the form $x = x_0 + \frac{c_1'}{(c_1', c_2)}k$ and $y = y_0 + \frac{c_2}{(c_1',c_2)}k$, with $k \in \mz$.  Hence the number of pairs $(c_3', c_4)$ with $\det(C') = n$ is
\begin{equation*}
    \ll 1 + X\min\Big(\frac{(c_1', c_2)}{c_2}, \frac{(c_1',c_2)}{c_1' q}\Big).
\end{equation*}
Thus 
\begin{align*}
 \mathcal{M}_1 \ll   \sum_{\substack{0<|n|\ll W/q\\ 
    n  \equiv 0 \shortmod{vd}
    \\
    (c_1',c_2)|n}}
     \Big(1 + X\min\Big(\frac{(c_1', c_2)}{c_2}, \frac{(c_1', c_2)}{c_1' q} \Big)\Big),
    % \ll \frac{W}{qv d}  \Big(1 + X\min\Big(\frac{(c_1', c_2)}{c_2}, \frac{(c_1', c_2)}{c_1' q} \Big)\Big),
\end{align*}
which simplifies to \eqref{eq:M1claimedbound}
by dropping the condition $n\equiv 0 \pmod{(c_1', c_2)}$.  
Therefore,
\begin{equation}
\label{eq:c1c2c3c4sum}
\mathcal{M} \ll
\sum_{\substack{1 \leq |c_1'| \ll X/q, 1\leq |c_2| \ll X \\ d_1 | (c_1', c_2) \\ 0 \neq t_1 c_2^2 - t_2 c_2 c_1' q + t_3 (c_1' q)^2 \equiv 0 \shortmod{d_2}}}
\frac{WX}{qv d}   \min\Big(\frac{(c_1', c_2)}{c_2}, \frac{(c_1', c_2)}{c_1' q}\Big).
\end{equation}
We proceed to show \eqref{eq:Mbound}.
% For the term with ``$1$", this can be seen as follows.  Factoring out $d_1$ from $c_1'$ and $c_2$, and using $(d_1, d_2) = 1$, we obtain
% \begin{equation}
% \sum_{\substack{1 \leq |c_1'| \ll X/q, 1\leq |c_2| \ll X \\ d_1 | (c_1', c_2) \\ 0 \neq t_1 c_2^2 - t_2 c_2 c_1' q + t_3 (c_1' q)^2 \equiv 0 \shortmod{d_2}}} 1
% \leq 
% \sum_{\substack{1 \leq |a| \ll \frac{X}{qd_1}, 1\leq |b| \ll \frac{X}{d_1} 
% \\
% 0 \neq t_1 b^2 - t_2q a b  + t_3q^2 a^2 \equiv 0 \shortmod{d_2}}} 1
% \end{equation}
Writing $c_1' = d_1 a$ and $c_2 = d_1 b$, 
 and then factoring out $g = \gcd(a,b)$,
we need to estimate
 \begin{multline}
 \label{eq:MboundMiddlePart}
\sum_{\substack{1 \leq |a| \ll \frac{X}{qd_1}, 1\leq |b| \ll \frac{X}{d_1} 
\\
0 \neq t_1 b^2 - t_2 q a b + t_3 q^2 a ^2 \equiv 0 \shortmod{d_2}}} 
\min\Big(\frac{(a,b)}{aq}, \frac{(a,b)}{b}\Big)
\\
\leq
\sum_{g \ll \frac{X}{d_1}} 
\sum_{\substack{1 \leq |a| \ll \frac{X}{qgd_1}, 1\leq |b| \ll \frac{X}{gd_1} 
\\
0 \neq t_1 b^2 - t_2 q a b + t_3q^2 a ^2 \equiv 0 \shortmod{\frac{d_2}{(d_2, g^2)}}}} 
\min\Big(\frac{1}{aq}, \frac{1}{b}\Big).
 \end{multline}
To handle the range $aq \leq b$, we break up the sum over $b$ into dyadic segments, $b \asymp B$.  
By Lemma \ref{lem.quadraticformDivisor}, and since $a$ and $b$ are nonzero, the function $(a, b) \mapsto t_1 b^2 -  t_2 q ba + t_3 q^2 a^2$ is $O((\|T\| X)^{\varepsilon})$-to-one.  In turn, the total number of integers in the codomain of this function that are divisible by $\frac{d_2}{(d_2, g^2)}$ is $O(\frac{B^2}{q \frac{d_2}{(d_2, g^2)}})$.  
Hence \eqref{eq:MboundMiddlePart} is
\begin{equation*}
\ll \sum_{g \ll \frac{X}{d_1}} 
\sum_{\substack{B \text{ dyadic} \\ 1 \ll B \ll \frac{X}{gd_1}}} \frac{1}{B}
\frac{B^2}{q \frac{d_2}{(d_2, g^2)}}
\ll \sum_{g \ll \frac{X}{d_1}} 
\frac{X}{g q d_1 \frac{d_2}{(d_2, g^2)}}
\ll \frac{X}{q d_1 d_2} \sum_{g \ll \frac{X}{d_1}} \frac{(d_2, g^2)}{g}.
\end{equation*}
The same bound arises from the range $b \leq aq$.
Using Rankin's trick, one can show
\begin{equation*}
    \sum_{n \leq X} \frac{(d,n^2)}{n} \ll (Xd)^{\varepsilon} \prod_{p^j || d} p^{\lfloor j/2 \rfloor}.
\end{equation*}
Inserting this bound into
\eqref{eq:c1c2c3c4sum}
then completes the proof of \eqref{eq:Mbound}.
\end{proof}

\section{Eisenstein series}
This section has three main goals.  In Section \ref{section:automorphicEisenstein}, we define the automorphic Eisenstein series for $\Sp_4(\mr)$ and state some of its fundamental properties, including connections between the minimal parabolic and Siegel parabolic Eisenstein series.  In Section \ref{section:incompleteEisensteinSeries}, we consider incomplete Eisenstein series associated to the minimal parabolic, and construct a family of such functions which are identically $1$, generalizing Nelson's construction for $\GL_2$.  Finally, in Section \ref{section:RankinSelbergBound}, we give a simple bound on the average size of Fourier coefficients of cusp forms, which has its basic origin in the analytic properties of the Eisenstein series.

\subsection{Automorphic Eisenstein series}
\label{section:automorphicEisenstein}
For $g \in \Sp_4(\mr)$, suppose the $M_0$-component of $g$ is $\mathrm{diag}(y_1, y_2, y_1^{-1}, y_2^{-1})$, in which case we
let $I_0(g, \nu_1, \nu_2) = y_1^{\nu_1 + 2} y_2^{2 \nu_2 - \nu_1 + 1}$.  Similarly, define $I_{\alpha}(g, u) = (y_1 y_2)^{u+\frac32}$, and note that $I_{\alpha}(g,u) = \det(\overline{m_{\alpha}}(g))^{u+\frac32}$.

The minimal parabolic Eisenstein series is defined by
\begin{equation*}
    E_0(g, \nu_1, \nu_2) = \sum_{\gamma \in (P_0 \cap \Gamma) \backslash \Gamma} I_0(\gamma g, \nu_1, \nu_2),
\end{equation*}
where the series converges absolutely for
\begin{equation}
\label{eq:minimalEisensteinSeriesAbsoluteConvergence}
\mathrm{Re}(2\nu_2 -  \nu_1) > 1,
\qquad
\mathrm{Re}(\nu_1 - \nu_2) > 1/2.
\end{equation}
%Remark.  Note that \eqref{eq:minimalEisensteinSeriesAbsoluteConvergence} implies that $\mathrm{Re}(\nu_2) > 3/2$ and $\mathrm{Re}(\nu_1) > 2$.

Let $f$ be an automorphic function on $\SL_2(\mz)$.
The Eisenstein series for the Siegel parabolic subgroup $P_{\alpha}$ induced from $f$ is defined by
\begin{equation*}
    E_{\alpha}(g,\nu,f) = 
    \sum_{\gamma \in (P_{\alpha} \cap \Gamma) \backslash \Gamma}
    f(\overline{m}_{\alpha}(\gamma g)) I_{\alpha}(\gamma g, \nu),
\end{equation*}
We mainly need the case where $f= E_s$ is the 
classical Eisenstein series on $\mathrm{SL}_2(\mz) \backslash \mh$, namely
\begin{equation*}
    E_s(z) = \sum_{\tau \in \Gamma_{\infty}^{(1)} \backslash \Gamma^{(1)}} \mathrm{Im}(\tau z)^s,
\end{equation*}
where $\Gamma^{(1)} = \mathrm{SL}_2(\mz)$ and $\Gamma_{\infty}^{(1)} = \{ \pm (\begin{smallmatrix} 1 & n \\ 0 & 1 \end{smallmatrix}): n \in \mz \}$.  In this case, the series $E_{\alpha}(g, \nu, E_s)$ converges absolutely for
\begin{equation}
\label{eq:SiegelEisensteinSeriesAbsoluteConvergence}
\mathrm{Re}(s) > 1, \qquad \mathrm{Re}(\nu-s) > 1/2.
\end{equation}
\begin{mylemma}
\label{lemma:EalphaVsE0}
    Suppose \eqref{eq:SiegelEisensteinSeriesAbsoluteConvergence} holds.  Then
    \begin{equation*}
        E_{\alpha}(g, \nu, E_s) = E_0(g, s+\nu-1/2, \nu).
    \end{equation*}
    The identity extends meromorphically to $(s,\nu) \in \mc^2$.
\end{mylemma}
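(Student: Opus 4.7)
The plan is to establish the identity in the region of absolute convergence \eqref{eq:SiegelEisensteinSeriesAbsoluteConvergence} by manipulating the coset sums, and then extend the result by analytic continuation. First I would substitute the definition of $E_s$ into $E_\alpha(g,\nu, E_s)$, producing a double sum indexed by $\gamma \in (P_\alpha \cap \Gamma) \backslash \Gamma$ and $\tau \in \Gamma_\infty^{(1)} \backslash \Gamma^{(1)}$. Applying Lemma \ref{lemma:malphaForGL2vsSp4commutingFormula} yields $\tau \cdot \overline{m}_\alpha(\gamma g) = \overline{m}_\alpha(m(\tau) \gamma g)$, so
\begin{equation*}
E_s\bigl(\overline{m}_\alpha(\gamma g)\bigr) I_\alpha(\gamma g, \nu) = \sum_{\tau} \mathrm{Im}\bigl(\overline{m}_\alpha(m(\tau) \gamma g)(i)\bigr)^s\, I_\alpha\bigl(m(\tau) \gamma g, \nu\bigr),
\end{equation*}
where I used $\det(\tau) = 1$ to pull $m(\tau)$ inside $I_\alpha$ as well.

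Next I would express the combined summand in $M_0$-Iwasawa coordinates. For $h \in G$ with $M_\alpha$-component $m(A)$ and further Iwasawa $A = n_0(u)\, \mathrm{diag}(y_1, y_2)\, k_0$, one has $\overline{m}_\alpha(h) = A \cdot \mathrm{SO}_2(\mr)$, giving $\mathrm{Im}(A(i)) = y_1/y_2$ and $\det(A) = y_1 y_2$. Hence the summand equals
\begin{equation*}
(y_1/y_2)^{s}\, (y_1 y_2)^{\nu + 3/2} = y_1^{s + \nu + 3/2}\, y_2^{\nu - s + 3/2},
\end{equation*}
which is precisely $I_0\bigl(m(\tau) \gamma g,\, s + \nu - \tfrac12,\, \nu\bigr)$ by direct comparison with $I_0(h, \nu_1, \nu_2) = y_1^{\nu_1+2} y_2^{2\nu_2 - \nu_1 + 1}$.

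It remains to establish that $(\gamma, \tau) \mapsto m(\tau) \gamma$ induces a bijection
\begin{equation*}
\bigl[(P_\alpha \cap \Gamma) \backslash \Gamma\bigr] \times \bigl[\Gamma_\infty^{(1)} \backslash \Gamma^{(1)}\bigr] \longleftrightarrow (P_0 \cap \Gamma) \backslash \Gamma.
\end{equation*}
Since $P_0 \subset P_\alpha$, there is a natural surjection $(P_0 \cap \Gamma) \backslash \Gamma \twoheadrightarrow (P_\alpha \cap \Gamma) \backslash \Gamma$ whose fibers are $\bigl((P_0 \cap P_\alpha) \cap \Gamma\bigr) \backslash (P_\alpha \cap \Gamma)$. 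Because $N_\alpha \subset P_0$, this fiber simplifies to $\bigl((P_0 \cap M_\alpha) \cap \Gamma\bigr) \backslash (M_\alpha \cap \Gamma)$. Via the isomorphism $m \colon \GL_2(\mz) \xrightarrow{\sim} M_\alpha \cap \Gamma$, this identifies with the quotient of $\GL_2(\mz)$ by its upper-triangular subgroup, for which $\SL_2(\mz)$-representatives suffice (multiplying by $\mathrm{diag}(-1, 1)$ on the left to correct the sign of the determinant), and the stabilizer within $\SL_2(\mz)$ is exactly $\Gamma_\infty^{(1)}$. Assembling the pieces, the double sum collapses to $E_0(g, s + \nu - \tfrac12, \nu)$, establishing the identity in the region of absolute convergence.

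Finally, both sides extend meromorphically to $(s, \nu) \in \mc^2$: the right-hand side by the meromorphic continuation of the minimal parabolic Eisenstein series (referenced as Theorem \ref{thm:EisensteinSeriesMinimalProperties}), and the left-hand side by combining the meromorphic continuation of $E_s$ in $s$ with the standard continuation of Siegel-parabolic induction in $\nu$. The identity then propagates by uniqueness of meromorphic continuation. I expect the main technical obstacle to be the coset bijection in the third paragraph, since the matching requires careful bookkeeping with the $\pm 1$ arising from the discrepancy between $\GL_2(\mz)$-upper triangular elements and $\Gamma_\infty^{(1)}$; every other step is either a direct substitution or an appeal to a previously stated lemma.
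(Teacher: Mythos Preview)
Your proposal is correct and is exactly the standard verification the paper has in mind: the paper does not give a proof at all, only the remark ``This is stated without proof in \cite[Prop 7.1]{Hangman}, and is not difficult to verify.'' Your unfolding of $E_s$ inside $E_\alpha$, the Iwasawa computation matching the summand to $I_0(\cdot, s+\nu-\tfrac12,\nu)$, and the coset bijection $(P_0\cap\Gamma)\backslash\Gamma \leftrightarrow \bigl[(P_\alpha\cap\Gamma)\backslash\Gamma\bigr]\times\bigl[\Gamma_\infty^{(1)}\backslash\Gamma^{(1)}\bigr]$ all check out (and the regions \eqref{eq:SiegelEisensteinSeriesAbsoluteConvergence} and \eqref{eq:minimalEisensteinSeriesAbsoluteConvergence} coincide under $\nu_1=s+\nu-\tfrac12$, $\nu_2=\nu$, so absolute convergence is not an issue). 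The sign bookkeeping you flag is handled exactly as you say: $B(\mz)\backslash\GL_2(\mz)\cong\Gamma_\infty^{(1)}\backslash\SL_2(\mz)$ via multiplication by $\mathrm{diag}(-1,1)$; indeed the paper carries out essentially the same coset argument later in the proof of Proposition \ref{prop:minimalEisensteinSeriesInTermsOfJLSPoincareSeries}.
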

Remark.  This is stated without proof in \cite[Prop 7.1]{Hangman}, and is not difficult to verify.

We can also go the other direction, to some extent.  Let $h: \mathrm{GL}_2(\mr) \rightarrow \mc$ be a smooth function defined in Iwasawa coordinates by $h((\begin{smallmatrix} 1 & u \\ & 1 \end{smallmatrix}) (\begin{smallmatrix} y_1 & \\ & y_2 \end{smallmatrix}) k ) = h((\begin{smallmatrix} 1 & u \\ & 1 \end{smallmatrix}))$, with the property that
$\sum_{n \in \mz} h((\begin{smallmatrix} 1 & u +n \\ & 1 \end{smallmatrix})) = 1$, and such that the sum is locally finite (meaning, for each $u$ there are finitely many $n \in \mz$ such that $h((\begin{smallmatrix} 1 & u +n \\ & 1 \end{smallmatrix})) \neq 0$).
With such $h$ we have the following.
\begin{myprop}
\label{prop:minimalEisensteinSeriesInTermsOfJLSPoincareSeries}
Suppose \eqref{eq:minimalEisensteinSeriesAbsoluteConvergence} holds.  
Let $\nu = \nu_2$ and $s = \nu_1 - \nu_2 + 1/2$, and 
let $\phi: \GL_2^{+}(\mr)/\mathrm{SO}_2(\mr) \rightarrow \mc$ be defined by
\begin{equation}
\label{eq:phidefminimalEisensteinSeriesInTermsOfJLSPoincareSeries}
\phi(g) = h(g) \mathrm{Im}(g)^s 
\det(g)^{\nu + \frac32},
%I_{\alpha}(g, \nu),
\end{equation}
where $\mathrm{Im}((\begin{smallmatrix} 1 & u \\ & 1 \end{smallmatrix}) (\begin{smallmatrix} y_1 & \\ & y_2 \end{smallmatrix}) k)^s = (y_1/y_2)^s$.
Then
\begin{equation*}
E_0(g, \nu_1, \nu_2) = P_0(g, \phi),
\end{equation*}
where $P_0$ was defined in \eqref{eq:P0defVariant} (or \eqref{eq:EincompleteDef}).
\end{myprop}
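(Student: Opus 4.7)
The plan is to partition the sum defining $P_0(g,\phi)$ into fibers over $(P_0 \cap \Gamma)\backslash \Gamma$.  Since $N_\alpha \subset P_0$, the surjection of coset spaces gives
\begin{equation*}
P_0(g,\phi) = \sum_{\gamma \in (P_0\cap\Gamma)\backslash \Gamma}\; \sum_{\mu \in (N_\alpha \cap \Gamma) \backslash (P_0 \cap \Gamma)} \phi\bigl(\overline{m}_\alpha(\mu\gamma g)\bigr),
\end{equation*}
the rearrangement being justified by absolute convergence in the range \eqref{eq:minimalEisensteinSeriesAbsoluteConvergence}.  The goal is to show the inner sum equals $I_0(\gamma g, \nu_1, \nu_2)$, after which summing over $\gamma$ immediately yields $E_0(g,\nu_1,\nu_2)$.

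First I would parameterize the fiber.  Under the convention $M_0 = \{\mathrm{diag}(y_1, y_2, y_1^{-1}, y_2^{-1}) : y_1, y_2 > 0\}$ used in the paper, $M_0 \cap \Gamma = \{I\}$, and hence $P_0 \cap \Gamma = N_0(\mz)$.  A direct calculation in the $N_0$-coordinates (using the defining relation $n_3 = n_1 n_5 + n_4$) shows that each element of $N_0(\mz)$ is left-equivalent modulo $N_\alpha \cap \Gamma$ to a unique $m(B_n)$, where $B_n = (\begin{smallmatrix} 1 & n \\ 0 & 1 \end{smallmatrix})$ and $n = n_1 \in \mz$.  Hence the fiber is naturally identified with $\mz$ via $n \mapsto m(B_n)$.

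Second, Lemma \ref{lemma:malphaForGL2vsSp4commutingFormula} gives $\overline{m}_\alpha(m(B_n)\gamma g) = B_n \overline{m}_\alpha(\gamma g)$.  Writing $\overline{m}_\alpha(\gamma g)$ in $\mathrm{GL}_2^+/\mathrm{SO}_2$-Iwasawa form $(\begin{smallmatrix} 1 & u \\ 0 & 1 \end{smallmatrix})\mathrm{diag}(y_1, y_2)$ with $y_1, y_2 > 0$, left multiplication by $B_n$ shifts only the unipotent coordinate, $u \mapsto u+n$, while leaving $y_1, y_2$ fixed.  From the definition \eqref{eq:phidefminimalEisensteinSeriesInTermsOfJLSPoincareSeries} of $\phi$, the inner sum becomes
\begin{equation*}
(y_1/y_2)^s (y_1 y_2)^{\nu+3/2} \sum_{n \in \mz} h\bigl((\begin{smallmatrix} 1 & u+n \\ 0 & 1 \end{smallmatrix})\bigr),
\end{equation*}
and the normalization of $h$ collapses this to $(y_1/y_2)^s (y_1 y_2)^{\nu+3/2}$.

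Finally, one checks this equals $I_0(\gamma g, \nu_1, \nu_2)$.  A $\mathrm{GL}_2^+$-Iwasawa of $\overline{m}_\alpha(\gamma g)$ refines the $P_\alpha$-Iwasawa of $\gamma g$ into a $P_0$-Iwasawa: the upper-unipotent factor $m((\begin{smallmatrix} 1 & u \\ 0 & 1 \end{smallmatrix}))$ lies in $N_0$, and $m(\mathrm{diag}(y_1, y_2))$ is precisely the $M_0$-component.  With $\nu = \nu_2$ and $s = \nu_1 - \nu_2 + 1/2$, the exponents match: $y_1^{s+\nu+3/2} y_2^{-s+\nu+3/2} = y_1^{\nu_1+2} y_2^{2\nu_2-\nu_1+1} = I_0(\gamma g, \nu_1, \nu_2)$, completing the identification.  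The main obstacle is the bookkeeping for the fiber $(N_\alpha \cap \Gamma)\backslash(P_0\cap\Gamma)$: the positive-diagonal convention on $M_0$ is essential, since sign elements would otherwise enlarge the fiber and introduce spurious constant factors that the chosen normalization of $h$ does not account for.
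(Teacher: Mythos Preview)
Your route is correct and genuinely different from the paper's.  The paper starts from $E_0$, invokes Lemma \ref{lemma:EalphaVsE0} to rewrite it as the Siegel-parabolic series $E_\alpha(g,\nu,E_s)$ induced from the $\GL_2$ Eisenstein series $E_s$, unfolds $E_s$ using the partition-of-unity identity for $h$, and then argues that $m(\tau)\gamma$ (with $\tau\in\SL_2(\mz)$ and $\gamma$ ranging over $(P_\alpha\cap\Gamma)\backslash\Gamma$) exhausts $(N_\alpha\cap\Gamma)\backslash\Gamma$.  You instead start from $P_0$ and fiber the sum directly over $(P_0\cap\Gamma)\backslash\Gamma$, so that the inner sum is over the small unipotent fiber and the $h$-identity collapses it in one step.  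This bypasses both $E_\alpha$ and Lemma \ref{lemma:EalphaVsE0} and is more economical; the paper's route has the compensating virtue of exhibiting the intermediate object $E_\alpha(\cdot,\nu,E_s)$, which is of independent interest.

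On the sign issue you flag: with $P_0$ defined by its matrix pattern (as in the paper), $P_0\cap\Gamma$ genuinely contains the four diagonal matrices $\mathrm{diag}(\epsilon_1,\epsilon_2,\epsilon_1,\epsilon_2)$, so strictly $P_0\cap\Gamma\supsetneq N_0(\mz)$ and the fiber $(N_\alpha\cap\Gamma)\backslash(P_0\cap\Gamma)$ is four copies of your $\{m(B_n)\}$, not one.  The paper's coset bookkeeping (matching $\tau\in\SL_2(\mz)$ against $(N_\alpha\cap\Gamma)\backslash(P_\alpha\cap\Gamma)\cong\GL_2(\mz)$) has the same slack.  Since these sign matrices lie in $K_\infty$, each extra copy contributes the same $I_0$ term, so the discrepancy is an overall constant; for the application this is harmless because the normalization of $H$ in Theorem \ref{thm:Einc=1} absorbs any such factor.
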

\begin{proof}
Recall from Lemma \ref{lemma:EalphaVsE0} that  $E_0(g, \nu_1, \nu_2) = E_{\alpha}(g, \nu, E_s)$.  Then
\begin{equation*}
E_0(g, \nu_1, \nu_2) = 
\sum_{\gamma \in (P_{\alpha} \cap \Gamma) \backslash \Gamma}
    E_s(\overline{m}_{\alpha}(\gamma g)) I_{\alpha}(\gamma g, \nu)
    % = 
    % \sum_{\gamma \in (P_{\alpha} \cap \Gamma) \backslash \Gamma}
    % E_s(\overline{m}_{\alpha}(\gamma g)) \det(\overline{m}_{\alpha}(\gamma g))^{\nu+\frac32}
    .
\end{equation*}
View $h$ as a function on $\mh$ via $h(x+iy) = h((\begin{smallmatrix} 1 & x\\ & 1 \end{smallmatrix}))$.
Using the definition of $E_s(z)$, and since $\sum_{\tau \in \Gamma_{\infty}^{(1)}} h( \tau w) = 1$ for all $w \in \mh$, we have
\begin{equation*}
E_s(z) = \sum_{\gamma \in \Gamma_{\infty}^{(1)} \backslash \Gamma^{(1)}} \mathrm{Im}(\gamma z)^s
= \sum_{\gamma \in \Gamma_{\infty}^{(1)} \backslash \Gamma^{(1)}}
\mathrm{Im}(\gamma z)^s
\sum_{\tau \in \Gamma_{\infty}^{(1)}} h(\tau \gamma z).
\end{equation*}
Hence $E_s(z) = \sum_{\tau \in \mathrm{SL}_2(\mz)} \mathrm{Im}(\tau z)^s h(\tau z)$.  Therefore,
\begin{equation*}
E_0(g, \nu_1, \nu_2) = 
\sum_{\gamma \in (P_{\alpha} \cap \Gamma) \backslash \Gamma}
\sum_{\tau \in \mathrm{SL}_2(\mz)} \mathrm{Im}(\tau \overline{m}_{\alpha}(\gamma g))^s h(\tau \overline{m}_{\alpha}(\gamma g))
   I_{\alpha}(\gamma g, \nu).
\end{equation*}
Recalling from Lemma \ref{lemma:malphaForGL2vsSp4commutingFormula} that $\tau \overline{m}_{\alpha}(\gamma g) = \overline{m}_{\alpha}(m(\tau) \gamma g)$, 
and noting that $I_{\alpha}(m(\tau) g, \nu) = I_{\alpha}(g, \nu)$ for all $\tau \in \SL_2(\mz)$, we obtain
\begin{equation*}
E_0(g, \nu_1, \nu_2) = 
\sum_{\gamma \in (P_{\alpha} \cap \Gamma) \backslash \Gamma}
\sum_{\tau \in \mathrm{SL}_2(\mz)} \mathrm{Im}(\overline{m}_{\alpha}(m(\tau) \gamma g))^s h(\overline{m}_{\alpha}(m(\tau) \gamma g))
   I_{\alpha}( m(\tau) \gamma g, \nu).
\end{equation*}
Next we argue that $m(\tau) \gamma$ runs over a complete set of coset representatives for $(N_{\alpha} \cap \Gamma) \backslash \Gamma$.  This is easy because every element of $P_{\alpha} \cap \Gamma$ can be expressed as $(\begin{smallmatrix} \tau & * \\ & \tau^{-t} \end{smallmatrix})$  or as $(\begin{smallmatrix} \tau & * \\ & \tau^{-t} \end{smallmatrix}) \cdot \mathrm{diag}(-1,1,-1,1)$ with $\tau \in \mathrm{SL}_2(\mz)$.  Note that
$(\begin{smallmatrix} \tau & * \\ & \tau^{-t} \end{smallmatrix}) = (\begin{smallmatrix} 1 & X \\ & 1 \end{smallmatrix})  m(\tau)$ for some $X \in \M_2^{\mathrm{sym}}(\mz)$.
Thus
\begin{equation}
\label{eq:E0formula}
E_0(g, \nu_1, \nu_2) = 
\sum_{\gamma \in (N_{\alpha} \cap \Gamma) \backslash \Gamma}
\mathrm{Im}(\overline{m}_{\alpha}(\gamma g))^s h(\overline{m}_{\alpha}(\gamma g))
   I_{\alpha}( \gamma g, \nu). 
\end{equation}
To complete the proof, recall that $I_{\alpha}(g, \nu) = \det(\overline{m}_{\alpha}(g))^{\nu + \frac32}$, and compare \eqref{eq:E0formula} with \eqref{eq:P0defVariant}.
\end{proof}

Next we summarize some fundamental properties of the minimal parabolic Eisenstein series.
We define the completed Eisenstein series $E_0^*(g,\nu_1, \nu_2)$ to be
\begin{equation*}
\zeta^*(\nu_1 + 1) \zeta^*(2 \nu_2 + 1) \zeta^*(2 \nu_2 - \nu_1 + 1) \zeta^*(2 \nu_1 - 2 \nu_2 + 1) E_0(g, \nu_1, \nu_2).
\end{equation*}
Following the notation of \cite[Section 2]{Hangman},
the Weyl group $W$ is 
$$W = \{1, s_{\alpha}, s_{\beta}, s_{\alpha} s_{\beta}, s_{\beta} s_{\alpha}, s_{\alpha} s_{\beta} s_{\alpha}, s_{\beta} s_{\alpha} s_{\beta}, w_0 = s_{\alpha} s_{\beta} s_{\alpha} s_{\beta} \},$$ 
which is the dihedral group of order $8$.  
For our purposes, 
it is convenient to record the action of the Weyl group elements on the tuple $(\nu_1, \nu_2)$:
\begin{table}[h!]
\centering
\begin{tabular}{c||c|c|c|c|c|c|c|c}
$w$  & $1$ &  $s_{\alpha}$ & $s_{\beta}$ & $s_{\alpha} s_{\beta}$      \\
\hline
$w(\nu_1, \nu_2)$ & $(\nu_1, \nu_2)$ &  $(2 \nu_2 - \nu_1, \nu_2)$ & $(\nu_1, \nu_1 - \nu_2)$ & $(2 \nu_2 - \nu_1, -\nu_1 + \nu_2)$ \\
\hline \hline
$w$  & $s_{\beta} s_{\alpha}$ &  $s_{\alpha} s_{\beta} s_{\alpha}$ & $s_{\beta} s_{\alpha} s_{\beta}$ &  $w_0$  \\
\hline
$w(\nu_1, \nu_2)$ &  $(\nu_1 - 2 \nu_2, \nu_1 - \nu_2)$  & $(-\nu_1, \nu_2 - \nu_1)$ & $(\nu_1 - 2 \nu_2, -\nu_2)$ & $(-\nu_1, - \nu_2)$
\end{tabular}
\end{table}

The main properties of $E_0^*$ are stated as follows.  
\begin{mytheo}
\label{thm:EisensteinSeriesMinimalProperties}
The completed Eisenstein series $E_0^*(g, \nu_1, \nu_2)$, initially defined on \eqref{eq:minimalEisensteinSeriesAbsoluteConvergence}, extends to a meromorphic function on $\mc^2$.  It satisfies the functional equation
\begin{equation}
\label{eq:EisensteinFE}
E_0^*(g, w(\nu_1, \nu_2)) = E_0^*(g, \nu_1, \nu_2)
\end{equation}
for all $w$ in the Weyl group.  It has poles along the $12$ lines:
\begin{align}
\label{eq:Lines}
&L_1: \nu_1 - 2 \nu_2 = 0,& \quad &L_4: \nu_1 = 0,& \quad &L_7: \nu_2 = 0,& \quad &L_{10}: - 2 \nu_1 + 2 \nu_2 = 0, \\
&L_2: \nu_1 - 2 \nu_2 = -1,& \quad &L_5: \nu_1 = -1,& \quad &L_8: \nu_2 = -1/2,& \quad &L_{11}: - 2 \nu_1 + 2 \nu_2 = -1, \nonumber \\
&L_3: \nu_1 - 2 \nu_2 = 1,& \quad &L_6: \nu_1 = 1,& \quad &L_9: \nu_2 = 1/2,& \quad &L_{12}: - 2 \nu_1 + 2 \nu_2 = 1 \nonumber.
\end{align}
The residue of $E_0^*(g, \nu_1, \nu_2)$ at $(\nu_1, \nu_2) = (2,3/2)$ (on the intersection of $L_2$ and $L_{11}$) is a nonzero constant function of $g$.  The only other constant residue is at $(-2, -3/2)$.
\end{mytheo}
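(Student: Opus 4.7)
The plan is to leverage Lemma \ref{lemma:EalphaVsE0}, which identifies $E_0(g,\nu_1,\nu_2)$ with the Siegel-parabolic Eisenstein series $E_\alpha(g,\nu_2,E_{\nu_1-\nu_2+1/2})$ induced from a classical $\GL_2$ Eisenstein series. The meromorphic continuation of $E_0$ to $\mc^2$ then follows by induction in stages: the $\GL_2$ series $E_s$ extends meromorphically in $s$, while $E_\alpha(g,\nu,f)$ extends meromorphically in $\nu$ by the standard Bruhat-decomposition constant-term computation (Bochner's tube theorem together with the Gindikin--Karpelevich formula).

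For the functional equation \eqref{eq:EisensteinFE}, two generators of the dihedral Weyl group suffice. The first, $s_\alpha$, arises from the $\GL_2$ functional equation $\zeta^*(2s)E_s=\zeta^*(2-2s)E_{1-s}$: under $s=\nu_1-\nu_2+\tfrac12$, the map $s\mapsto 1-s$ fixes $\nu_2$ and sends $\nu_1\mapsto 2\nu_2-\nu_1$, matching $s_\alpha$. The second, $s_\beta$, comes from the Siegel intertwining operator in $\nu$: computing the constant term of $E_\alpha(g,\nu,f)$ along $P_\alpha$ via the Bruhat decomposition of $\Gamma$ produces a Gindikin--Karpelevich ratio implementing $s_\beta$. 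The four zeta factors defining $E_0^*$ correspond precisely to the four positive roots of the $C_2$ root system; they are chosen to cancel the Gindikin--Karpelevich denominators, so the completed series is $W$-invariant.

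To locate the polar lines, I would track where the four zeta factors are singular. Each $\zeta^*$ has poles at $0$ and $1$, accounting for $8$ of the $12$ lines: $\zeta^*(\nu_1+1)$ yields $L_4,L_5$; $\zeta^*(2\nu_2+1)$ yields $L_7,L_8$; $\zeta^*(2\nu_2-\nu_1+1)$ yields $L_1,L_3$; and $\zeta^*(2\nu_1-2\nu_2+1)$ yields $L_{10},L_{12}$. The remaining lines $L_2,L_6,L_9,L_{11}$ are forced by $W$-invariance, since $w_0$ sends $L_3\leftrightarrow L_2$, $L_5\leftrightarrow L_6$, $L_8\leftrightarrow L_9$, and $L_{12}\leftrightarrow L_{11}$. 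That $E_0^*$ is holomorphic off these twelve lines would be read off from the constant term of $E_0^*$ along $N_0$, which is an explicit sum of $W$-translates of the Gindikin--Karpelevich product: every pole of $E_0^*$ must match a pole of one of these summands.

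Finally, for the constant residue at $(\nu_1,\nu_2)=(2,\tfrac32)\in L_2\cap L_{11}$, observe that at this point $I_0(g,2,\tfrac32)=y_1^{4}y_2^{2}=\delta_{P_0}(g)$, so the underlying character is the modular character of $P_0$; this is the ``trivial representation'' point of the Eisenstein spectrum. Taking iterated residues along $L_2$ and then $L_{11}$ and tracking the leading coefficients through the Gindikin--Karpelevich product shows the double residue is a nonzero constant function of $g$. The functional equation under $w_0$ then yields the companion constant residue at $(-2,-\tfrac32)$. The main obstacle is the uniqueness claim: one must enumerate the other intersection points of polar lines (including the remaining six points in the $W$-orbit of $(2,\tfrac32)$) and verify, by inspecting the surviving exponent $y_1^{\nu_1+2}y_2^{2\nu_2-\nu_1+1}$ at each point, that the resulting iterated residue is either zero or a non-constant automorphic form. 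This enumeration is finite but case-intensive, and constitutes the bulk of the work.
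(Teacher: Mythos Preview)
Your proposal is more detailed than what the paper actually provides: the paper does not give a self-contained proof, but rather cites \cite{Shahidi} for the general Langlands--Shahidi theory (meromorphic continuation and functional equation) and reads the twelve polar lines directly from the explicit constant-term formula in \cite[Theorem~1.1]{Hangman}, invoking the standard fact that poles of the Eisenstein series are among those of its constant term. Your strategy---induction in stages via Lemma~\ref{lemma:EalphaVsE0}, functional equation from two Weyl-group generators, and pole detection via the constant term---is essentially the same route, just unpacked further.

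One comment on your pole argument: identifying the first eight lines as the pole locations of the four normalizing $\zeta^*$ factors is a useful heuristic for \emph{guessing} where poles of $E_0^*$ might sit, but it is not quite the right logical order. Poles of the normalizing factors become poles of $E_0^*$ only if $E_0$ does not vanish there, and conversely $E_0^*$ could in principle have poles coming from $E_0$ where the zeta factors are regular. The clean argument (which you do mention later, and which the paper uses) is to read all twelve lines simultaneously from the constant term of $E_0^*$: it is a finite sum of $I_0(g,w(\nu_1,\nu_2))$ times explicit ratios of completed zeta values, and the union of their polar loci is exactly the twelve lines. Your Weyl-invariance step deriving $L_2, L_6, L_9, L_{11}$ from the others is then redundant but correct.

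The paper says nothing further about the constant residue at $(2,3/2)$ or the uniqueness claim; your sketch of the iterated-residue computation and the finite enumeration of intersection points is a reasonable plan for filling that in, and goes beyond what the paper supplies.
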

A good reference for the general properties of Eisenstein series, including the meromorphic continuation and functional equation, is \cite{Shahidi}.  For the coordinates of the $12$ polar lines, we use the well-known fact that the poles of the Eisenstein series are located among the poles of the constant term.  The constant term of $E_0^*$ is given in convenient and explicit form in \cite[Theorem 1.1]{Hangman}.

\subsection{Incomplete Eisenstein series}
\label{section:incompleteEisensteinSeries}
Consider a function of the form
\begin{equation}
\label{eq:E0incompleteDefinition}
E_0^*(g) = 
\frac{1}{(2 \pi i)^2} \int_{(\sigma_1)} \int_{(\sigma_2)} 
E_0^*(g, \nu_1, \nu_2) H(\nu_1, \nu_2) d\nu_1 d \nu_2,
\end{equation}
where $H$ is a holomorphic function bounded in vertical strips, and where the integrals are over $\mathrm{Re}(\nu_i) = \sigma_i$, in the region \eqref{eq:minimalEisensteinSeriesAbsoluteConvergence}.
Let
\begin{equation*}
H^*(\nu_1, \nu_2) = H(\nu_1, \nu_2) \zeta^*(\nu_1 + 1) \zeta^*(2 \nu_2 + 1) \zeta^*(2 \nu_2 - \nu_1 + 1) \zeta^*(2 \nu_1 - 2 \nu_2 + 1),
\end{equation*}
so that
\begin{align}
\label{eq:E0incompleteVariant2}
E_0^*(g) = 
\frac{1}{(2 \pi i)^2} \int_{(\sigma_1)} \int_{(\sigma_2)} 
E_0(g, \nu_1, \nu_2) H^*(\nu_1, \nu_2) d\nu_1 d \nu_2.
%\\
%= 
%\sum_{\gamma \in (P_0 \cap \Gamma) \backslash \Gamma} 
%\frac{1}{(2 \pi i)^2} \int_{(\sigma_1)} \int_{(\sigma_2)} 
%I_0(\gamma g, \nu_1, \nu_2) H(\nu_1, \nu_2) d\nu_1 d \nu_2.
\end{align}

\begin{myprop}
\label{prop:unfoldingWithIncompleteEisensteinSeriesIdenticallyOne}
Let $f \in L^{\infty}(\Gamma \backslash \mh_2)$.  With the identification $Z \leftrightarrow g$ via \eqref{eq:SigelidentificationUpperHalfPlane}, we have
\begin{multline*}
\int_{\Gamma \backslash \mh_2} f(Z) E_0^*(g) d \mu 
=
\int_{(\sigma_1)} \int_{(\sigma_2)}  
H^*(\nu_1, \nu_2)
\\
\times \int_{\substack{x_1, x_2, x_3 \in \mz \backslash \mr \\ 
r_1, r_2 \in \mr_{>0}, u \in \mr}}
\Big(
f(X+ i Y) 
h((\begin{smallmatrix} 1 & u \\ & 1 \end{smallmatrix}))
(r_1/r_2)^{\frac{\nu_1 - \nu_2 + \frac12}{2}}(r_1 r_2)^{\frac{\nu_2}{2} + \frac34}
 \frac{ dX  du dr_1 dr_2}{r_1^3 r_2^2} 
\Big)
 \frac{d\nu_1 d \nu_2}{(2 \pi i)^2},
\end{multline*}
where $\sigma_1$ and $\sigma_2$ are such that \eqref{eq:minimalEisensteinSeriesAbsoluteConvergence} holds.
\end{myprop}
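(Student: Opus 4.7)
The plan is to combine the two unfolding tools already developed in the excerpt: Proposition \ref{prop:minimalEisensteinSeriesInTermsOfJLSPoincareSeries}, which identifies $E_0(g,\nu_1,\nu_2)$ with the incomplete Poincare series $P_0(g,\phi)$ on the region \eqref{eq:minimalEisensteinSeriesAbsoluteConvergence}, and the general Poincare series unfolding formula \eqref{eq:PoincareSeriesUnfolding} with $Q=0$. The representation \eqref{eq:E0incompleteVariant2} already has the zeta factors absorbed into $H^*$, so the integrand on the contour is literally $E_0(g,\nu_1,\nu_2) H^*(\nu_1,\nu_2)$, which is what we want to hit with Proposition \ref{prop:minimalEisensteinSeriesInTermsOfJLSPoincareSeries}.

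First I would interchange the $(\nu_1,\nu_2)$-integrals with the integral $\int_{\Gamma\backslash\mh_2} f(Z)(\cdot)\,d\mu$ by Fubini. This is legitimate because $f\in L^\infty(\Gamma\backslash\mh_2)$, $H^*$ is rapidly decreasing on the vertical strips (its only growth comes from polynomial factors coming from $H$ times archimedean zeta factors, which we assume are arranged to give rapid decay in $|\mathrm{Im}(\nu_i)|$), and on \eqref{eq:minimalEisensteinSeriesAbsoluteConvergence} the defining series for $E_0(g,\nu_1,\nu_2)$ converges absolutely and locally uniformly. After the interchange, the inner integral over $Z$ with the absolutely convergent series $E_0(g,\nu_1,\nu_2)$ is itself amenable to unfolding, and the outer contour integral just passes through.

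Next, with $\nu=\nu_2$ and $s=\nu_1-\nu_2+\tfrac12$, Proposition \ref{prop:minimalEisensteinSeriesInTermsOfJLSPoincareSeries} gives $E_0(g,\nu_1,\nu_2)=P_0(g,\phi)$ where $\phi(g)=h(g)\,\mathrm{Im}(g)^{s}\det(g)^{\nu+\frac32}$. Since $Z\leftrightarrow g$ via \eqref{eq:SigelidentificationUpperHalfPlane}, I then apply \eqref{eq:PoincareSeriesUnfolding} with $Q=0$ to the inner $Z$-integral, obtaining
\begin{equation*}
\int_{\Gamma\backslash \mh_2} f(Z)\,P_0(Z,\phi)\,d\mu
=\int_{\substack{x_1,x_2,x_3\in\mz\backslash\mr\\ r_1,r_2>0,\ u\in\mr}} f\!\left(\bigl(\begin{smallmatrix}1 & X\\ & 1\end{smallmatrix}\bigr)\bigl(\begin{smallmatrix}R & \\ & R^{-t}\end{smallmatrix}\bigr)\right)\phi(R)\,\frac{dX\,du\,dr_1\,dr_2}{r_1^3 r_2^2}.
\end{equation*}
The remaining task is to evaluate $\phi(R)$ on the Iwasawa form $R=\bigl(\begin{smallmatrix}1 & u\\ & 1\end{smallmatrix}\bigr)\bigl(\begin{smallmatrix}\sqrt{r_1} & \\ & \sqrt{r_2}\end{smallmatrix}\bigr)k$. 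By the definition of $h$ (right-invariant under the diagonal and under $\mathrm{SO}_2$), $h(R)=h\bigl(\bigl(\begin{smallmatrix}1 & u\\ & 1\end{smallmatrix}\bigr)\bigr)$; by the definition of $\mathrm{Im}$ in the statement of Proposition \ref{prop:minimalEisensteinSeriesInTermsOfJLSPoincareSeries}, $\mathrm{Im}(R)^s=(\sqrt{r_1}/\sqrt{r_2})^{s}=(r_1/r_2)^{s/2}$; and $\det(R)^{\nu+\frac32}=(r_1 r_2)^{\nu/2+3/4}$ since $k\in\mathrm{SO}_2(\mr)$. Substituting $s=\nu_1-\nu_2+\tfrac12$ and $\nu=\nu_2$ yields exactly the factor $h\bigl(\bigl(\begin{smallmatrix}1 & u\\ & 1\end{smallmatrix}\bigr)\bigr)(r_1/r_2)^{(\nu_1-\nu_2+1/2)/2}(r_1 r_2)^{\nu_2/2+3/4}$ appearing inside the claimed identity.

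The main, and essentially only, subtlety is the Fubini step, because the Siegel domain at infinity is noncompact and $\phi(R)$ grows in the $r_1,r_2$ directions. This is controlled by insisting that $(\sigma_1,\sigma_2)$ lie strictly inside \eqref{eq:minimalEisensteinSeriesAbsoluteConvergence}, where the defining double series for $E_0(g,\nu_1,\nu_2)$ converges absolutely and the resulting majorant is integrable against $|f|\le\|f\|_\infty$ on a fundamental domain; combined with rapid decay of $H^*$ on vertical lines (from the archimedean zeta factors), this justifies all interchanges and completes the proof.
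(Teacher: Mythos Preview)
Your proposal is correct and follows essentially the same route as the paper: interchange the $(\nu_1,\nu_2)$-integrals with the $\Gamma\backslash\mh_2$-integral using \eqref{eq:E0incompleteVariant2}, invoke Proposition \ref{prop:minimalEisensteinSeriesInTermsOfJLSPoincareSeries} to replace $E_0(g,\nu_1,\nu_2)$ by $P_0(g,\phi)$, unfold via \eqref{eq:PoincareSeriesUnfolding} with $Q=0$, and then evaluate $\phi(R)$ in Iwasawa coordinates. The paper's own proof is terser and does not spell out the Fubini justification or the computation of $\phi(R)$, so your version is if anything more complete.
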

\begin{proof}
By Proposition \ref{prop:minimalEisensteinSeriesInTermsOfJLSPoincareSeries}, we have
\begin{equation*}
\int_{\Gamma \backslash \mh_2} f(Z) E_0^*(g) d \mu
=
\frac{1}{(2 \pi i)^2} \int_{(\sigma_1)} \int_{(\sigma_2)} 
\int_{\Gamma \backslash \mh_2} f(Z)
P_0(g, \phi) d\mu
 d\nu_1 d \nu_2,
\end{equation*}
where $\phi(g) = h(g) \mathrm{Im}(g)^s I_{\alpha}(g, \nu)$.  Next we apply
\eqref{eq:PoincareSeriesUnfolding} with $\phi(R) = h((\begin{smallmatrix} 1 & u \\ & 1 \end{smallmatrix})) (r_1/r_2)^{s/2} (r_1 r_2)^{\frac{\nu}{2} + \frac{3}{4}}$, 
giving the claimed formula.
\end{proof}

\begin{mytheo}
\label{thm:Einc=1}
There exists a choice of acceptable function $H$ so that $E_0^*(g) =1$ for all $g \in \mathrm{Sp_4}(\mr)$.  Specifically, any $H$ satisfying the following properties has $E_0^*(g) = 1$:
\begin{enumerate}
    \item $H(\nu_1, \nu_2)$ is holomorphic in $\mc^2$, and bounded in any Cartesian product of vertical strips,
    \item $H(\nu_1, \nu_2) = P(\nu_1, \nu_2) H_0(\nu_1, \nu_2)$, where $P(x,y)$ is the polynomial defined by 
    \begin{equation*}
P(\nu_1, \nu_2) = \prod_{w \in W} \ell_1(w(\nu_1, \nu_2)) \cdot \ell_{10}(w(\nu_1, \nu_2)), 
\end{equation*}
where $\ell_1(x,y) = x-2y$ and $\ell_{10}(x,y) = -2x+2y$,
and where $H_0$ is holomorphic on $\mc^2$,
\item $H(w(\nu_1, \nu_2)) = \det(w) H(\nu_1, \nu_2)$ for all $w \in W$,
\item $H^*(2,3/2) \cdot \mathrm{Res}_{(\nu_1, \nu_2) = (2,3/2)} E_0(g, \nu_1, \nu_2) = 1$.
\end{enumerate}
\end{mytheo}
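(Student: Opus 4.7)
The plan is to generalize Nelson's $\GL_2$ construction to the two-variable setting: shift both contours in \eqref{eq:E0incompleteDefinition} to a Weyl-invariant locus on which the main integral vanishes by symmetry, and show that the residues collected along the way reduce to a single contribution at $(\nu_1, \nu_2) = (2, 3/2)$, normalized to $1$ by property (4).

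By Theorem \ref{thm:EisensteinSeriesMinimalProperties}, the integrand $E_0^*(g, \nu_1, \nu_2) H(\nu_1, \nu_2)$ is meromorphic on $\mc^2$ with poles confined to the twelve lines of \eqref{eq:Lines}. The polynomial $P = \prod_{w \in W} \ell_1(w(\nu)) \ell_{10}(w(\nu))$ from property (2) vanishes on the Weyl orbit $\{L_1, L_4, L_7, L_{10}\}$ (the ``$\ell = 0$'' lines) to sufficient order to cancel the poles of $E_0^*$ there, so the integrand has residual singularities only on the eight ``$\ell = \pm 1$'' lines $\{L_2, L_3, L_5, L_6, L_8, L_9, L_{11}, L_{12}\}$, which form a single Weyl orbit. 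I would then shift both contours to the Weyl-invariant locus $\mathcal{C}^* = \{(it_1, it_2) : t_1, t_2 \in \mr\}$. Since the real linear Weyl action preserves $\mathcal{C}^*$ as a set, and \eqref{eq:EisensteinFE} combined with property (3) yields $E_0^*(g, w\nu) H(w\nu) = \det(w) E_0^*(g, \nu) H(\nu)$, the change of variable $\nu \mapsto w(\nu)$ for any $w$ of determinant $-1$ shows that the integral over $\mathcal{C}^*$ equals its own negative and so vanishes.

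The main obstacle is the residue bookkeeping during the shift, which I would implement iteratively: first shift $\sigma_1 = \mathrm{Re}(\nu_1)$ from large positive to $0$ (keeping $\sigma_2$ large), then shift $\sigma_2 = \mathrm{Re}(\nu_2)$ to $0$. Along the way, one picks up one-dimensional residue integrals on the ``$\ell = \pm 1$'' lines crossed in the first stage, plus further one-dimensional residues and two-dimensional residues at intersections of pairs of polar lines in the second. The anti-symmetry of $H$ under the Weyl group should force systematic cancellations between residues on pairs of lines related by Weyl reflections (e.g.\ $L_5$ and $L_6$, $L_2$ and $L_3$, and analogously for the corresponding intersection points). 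The only contribution that cannot be paired off within the shifting region is the two-dimensional residue at $L_2 \cap L_{11} = \{(2, 3/2)\}$, whose Weyl mate $(-2, -3/2)$ lies on the far side of $\mathcal{C}^*$ and is never crossed. Theorem \ref{thm:EisensteinSeriesMinimalProperties} identifies this surviving residue as a nonzero constant function of $g$, and property (4) is precisely the normalization making it equal to $1$, yielding $E_0^*(g) \equiv 1$.
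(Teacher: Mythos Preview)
Your high-level plan matches the paper: shift both contours to the Weyl-fixed locus $\mathrm{Re}(\nu_1)=\mathrm{Re}(\nu_2)=0$, where property (3) together with \eqref{eq:EisensteinFE} makes the double integral odd under any $w$ with $\det w=-1$, hence zero; then argue that the only surviving residue is the point residue at $(2,3/2)$. But your proposed mechanism for killing the one-dimensional line residues is wrong, and this is the heart of the argument.

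You suggest that residues on Weyl-conjugate lines (e.g.\ $L_5$ with $L_6$, or $L_2$ with $L_3$) cancel in pairs. However, only one member of each such pair separates the region \eqref{eq:minimalEisensteinSeriesAbsoluteConvergence} from the origin: moving from $(\sigma_1,\sigma_2)=(3,2.1)$ to $(0,0)$ one must cross $L_2,L_6,L_9,L_{11}$ but never $L_3,L_5,L_8,L_{12}$, so there is nothing to pair with. The correct mechanism, and the one the paper uses, is that each line residue vanishes \emph{individually}: for each of the four crossed lines $L$ there is a Weyl element $w_L$ of determinant $-1$ that maps $L$ to itself (namely $s_\beta$ for $L_6$, $s_\alpha$ for $L_9$, $s_\alpha s_\beta s_\alpha$ for $L_2$, and $s_\beta s_\alpha s_\beta$ for $L_{11}$), and after moving the remaining one-variable contour to the $w_L$-fixed point of $L$, the residue integrand is odd under $w_L$ and hence integrates to zero. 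The paper implements this via a careful zigzag path (alternating small moves in $\sigma_1$ and $\sigma_2$) chosen so that exactly one polar line is crossed at a time, and so that every multi-line intersection encountered during the subsequent one-variable shifts lies on one of the through-origin lines $L_1,L_4,L_7,L_{10}$, where $P$ vanishes to order $4$ and the singularity is removable. The unique intersection point not neutralized in this way is $L_2\cap L_{11}=(2,3/2)$, which produces the constant $1$ via (4).

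You also omit the existence half of the theorem. The paper handles this explicitly: set $H_0(\nu)=\sum_{w\in W}\det(w)\exp\big((w\nu)_1^2+(w\nu)_2^2\big)$, observe that $P$ is Weyl-invariant so that $H=PH_0$ satisfies (1)--(3), and verify (numerically) that $H_0(2,3/2)\neq 0$ and $P(2,3/2)\neq 0$, so (4) can be arranged by rescaling.
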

Remarks.  We have made no attempt to provide the most general class of test functions in this theorem.  Rather, we imposed conditions to make the proof as simple as possible.
The choice of notation $\ell_1$ and $\ell_{10}$ comes from the fact that $\ell_i$ vanishes along the line $L_i$ (in \eqref{eq:Lines}) for $i =1, 10$.
\begin{proof}
We begin by showing that the collection of functions satisfying (1)--(4) is nonempty.  For instance, let $H_{00}(\nu_1, \nu_2) = \exp(\nu_1^2 + \nu_2^2)$, and define
\begin{equation*}
H_{0}(\nu_1, \nu_2) = \sum_{w \in W} \det(w) H_{00}(w(\nu_1, \nu_2)),
\end{equation*}
which satisfies $H_0(w(\nu_1, \nu_2)) = \det(w) H_0(\nu_1, \nu_2)$.  By construction, $P$ is invariant under $W$, so $H$ satisfies (2) and (3).  It is easy to see that $H_{00}$ is entire with rapid decay on any vertical strips $\mathrm{Re}(\nu_i) = \sigma_i$, for $-T \leq \sigma_1, \sigma_2 \leq T$.  It is also not difficult to check that $H_{00}(w(\nu_1, \nu_2))$ has the same decay properties for any $w \in W$.  This follows because the change of variables induced from $w$ has determinant $\pm 1$ and therefore does not change the discriminant of the positive definite quadratic form $x^2 + y^2$.  
Hence $H$ satisfies (1).  Finally, for (4), a brute force computer calculation gives $H_0(2,3/2) = 851.215\dots \neq 0$.  
Moreover, $P(2,3/2) \neq 0$ (this will be easily explained in the proof below, in view of Figure \ref{fig:polarlinesAll}).
Hence $H(2,3/2) \neq 0$ and $H^*(2,3/2) \neq 0$, so rescaling $H$ will achieve (4).

Now we proceed to show that if $H$ satisfies (1)--(4), then $E_0^*(g)$ is identically $1$.
We begin from the definition \eqref{eq:E0incompleteDefinition}, taking say $\sigma_1 = 3$ and $\sigma_2 = 2.1$, which is in the region of absolute convergence.  For ease of reference, see Figure \ref{fig:polarlinesAll} for a graph of the $12$ polar lines from Theorem \ref{thm:EisensteinSeriesMinimalProperties}.
\begin{figure}[ht]
  \centering
  \begin{tikzpicture}[scale=1.3]
    % Clipping region: x in [-2.2, 2.2], y in [-2, 2]
    \clip (-2.2,-2) rectangle (2.2,2);

    % Axes
    \draw[->] (-2.2,0) -- (2.2,0);
    \draw[->] (0,-2) -- (0,2);

    % Axis labels
    \node at (2.1,-0.15) {$\nu_1$};
    \node at (-0.15,1.9) {$\nu_2$};

    % x - 2y = c lines => y = 0.5x - c/2
    \foreach \c/\col in {-1/blue, 0/blue, 1/blue} {
      \draw[thick, \col] (-2.2,{0.5*(-2.2)-\c/2}) -- (2.2,{0.5*(2.2)-\c/2});
    }

    % -2x + 2y = c => y = x + c/2
    \foreach \c/\col in {-1/red, 0/red, 1/red} {
      \draw[thick, \col] (-2.2,{-2.2 + \c/2}) -- (2.2,{2.2 + \c/2});
    }

    % Horizontal lines: y = -0.5, 0, 0.5
    \foreach \y in {-0.5,0,0.5} {
      \draw[green!70!black, dashed] (-2.2,\y) -- (2.2,\y);
    }

    % Vertical lines: x = -1, 0, 1
    \foreach \x in {-1,0,1} {
      \draw[orange, dashed] (\x,-2) -- (\x,2);
    }
  \end{tikzpicture}
  \caption{Polar lines of $E_0^*(g, \nu_1, \nu_2)$}
\label{fig:polarlinesAll}
\end{figure}
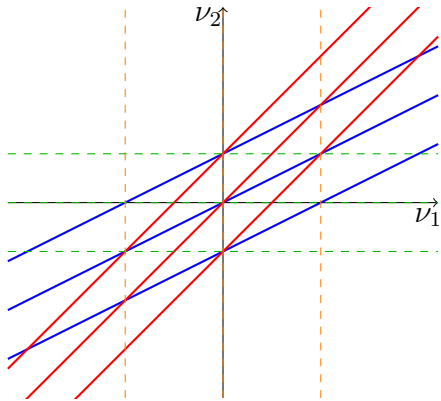
It is convenient to choose $H$ to vanish along some of these lines, which motivates the presence of $P$.
Here $P$ is Weyl-group invariant and vanishes to order $4$ along the polar lines through the origin (namely, $L_1$, $L_4$, $L_7$, and $L_{10}$). The fact that $P(2,3/2) \neq 0$ can be seen from the figure, since the only two lines passing through $(2,3/2)$ are $L_2$ and $L_{11}$, which do not pass through the origin (the Weyl group fixes the origin).  With $H(\nu_1, \nu_2) = P(\nu_1, \nu_2) \cdot H_0(\nu_1, \nu_2)$, then $E_0^*(g, \nu_1, \nu_2) \cdot P(\nu_1, \nu_2)$ does not have any polar lines which pass through the origin.  See Figure \ref{fig:polarlinesAlltrimmed} for a simplified figure displaying only the relevant lines, and also only in the first quadrant:
\begin{figure}[ht]
  \centering
  \begin{tikzpicture}[scale=1.3]
% Clip to first quadrant: x in [0,3], y in [0,2.5]
    \clip (0,0) rectangle (3,2.25);

    % Axes arrows only in first quadrant
    \draw[->] (0,0) -- (3,0) node[right] {$\nu_1$};
    \draw[->] (0,0) -- (0,2.5) node[above left] {$\nu_2$};

    % x - 2y = c lines for c = -1, 1 (exclude c=0)
    \foreach \c/\col in {-1/blue,  1/blue} {
      \draw[thick, \col] (-2.2,{0.5*(-2.2)-\c/2}) -- (3,{0.5*(3)-\c/2});
    }

    % -2x + 2y = c => y = x + c/2
    \foreach \c/\col in {-1/red, 1/red} {
      \draw[thick, \col] (-2.2,{-2.2 + \c/2}) -- (3,{3 + \c/2});
    }

    % Horizontal lines: y = -0.5, 0, 0.5
    \foreach \y in {-0.5,0.5} {
      \draw[green!70!black, dashed] (-2.2,\y) -- (3,\y);
    }

    % Vertical lines: x = -1, 0, 1
    \foreach \x in {-1,1} {
      \draw[orange, dashed] (\x,-3) -- (\x,3);
    }
  \end{tikzpicture}
  \caption{Polar lines of $P(\nu_1, \nu_2) \cdot E_0^*(g, \nu_1, \nu_2)$}
\label{fig:polarlinesAlltrimmed}
\end{figure}
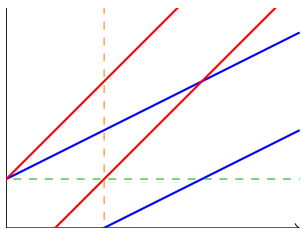

Now we shift contours in \eqref{eq:E0incompleteDefinition} as follows.  We will always move one contour at a time so that we can view the double integral as iterated single integrals, and thereby apply single variable complex analysis.
We begin by fixing $\sigma_2 = 2.1$ and shift $\sigma_1$ to $\sigma_1 = 1.7$, crossing a pole along the line $L_{11}$ only (precisely, at $\nu_1 = \nu_2 + 1/2$, with $\sigma_1 = 2.6$.).  Next we fix $\sigma_1$ at $1.7$ and shift $\sigma_2$ from $2.1$ to $\sigma_2 = 1.3$, passing a pole along the line $L_2$ only, at $\nu_2 = \frac12 +  \frac{\nu_1}{2}$ with $\sigma_2 = 1.35$.
Next we can adjust the lines of integration without passing any poles, taking $\sigma_1 = 1.2$ and $\sigma_2 = 0.8$.
We follow this by moving $\sigma_1$ to $0.8$, crossing a pole along $L_6$ only.  Next we shift $\sigma_2$ to $0.4$, crossing a pole along $L_9$ only.  Finally, we move the contour to $\sigma_1 = \sigma_2 = 0$ without crossing any more poles.
In total, we have
\begin{equation*}
E_0^*(g) = I_{0,0} + R_{11} + R_{2} + R_{6} + R_{9},
\end{equation*}
where
\begin{equation*}
I_{0,0} = \frac{1}{(2 \pi i)^2} \int_{(0)} \int_{(0)} 
E_0^*(g, \nu_1, \nu_2) H(\nu_1, \nu_2) d\nu_1 d \nu_2,
\end{equation*}
and the terms $R_i$ are the residue terms along the lines $L_i$.
Applying a change of variables by $w\in W$ with $\det(w) = -1$, using 
the functional equation of Eisenstein series \eqref{eq:EisensteinFE}, and
using property (3) shows $I_{0,0} = - I_{0,0}$ so $I_{0,0} = 0$.

Next we analyze the residual terms.  We will show $R_{11} = 1$ and $R_2 = R_6 = R_9 = 0$.
We have
\begin{equation*}
R_{11} = \frac{1}{2 \pi i} \int_{(2.1)} 
\mathrm{Res}_{\nu_1 = 1/2 + \nu_2}(
E_0^*(g, \nu_1, \nu_2) H(\nu_1, \nu_2))  d \nu_2,
\end{equation*}
with similar definitions for the other residual terms to be considered in turn.  To further analyze $R_{11}$, we shift contours to the line $\sigma_2 = 0$, crossing poles at $\sigma_2 = 3/2$ and $\sigma_2 = 1/2$.  The Eisenstein series has a constant residue on the former pole.  The latter pole occurs at $(1,1/2)$ which is along the line $L_1$.  The Eisenstein series has a triple pole at this point, while $H$ vanishes to order at least $4$, so the latter singularity is removable.  Hence $R_{11} = c + R_{11}'$, where
$c = H(2,3/2) \cdot \mathrm{Res}_{(\nu_1, \nu_2) = (2,3/2)} E_0^*(g, \nu_1, \nu_2)=1$ (by (4)) and 
\begin{equation*}
R_{11}'= \frac{1}{2 \pi i} \int_{(0)} 
\mathrm{Res}_{\nu_1 = 1/2 + \nu_2}(
E_0^*(g, \nu_1, \nu_2) H(\nu_1, \nu_2))  d \nu_2.
\end{equation*}
We will derive similar formulas for the other residual terms, and finally analyze each such term.  For the term $R_{2}$, we shift contours to $\sigma_1 = 0$; along the way we encounter poles at $(1,1)$ (of order $2$) and $(0,1/2)$ (of order $3$), however $P$ vanishes to order at least $4$ at both of these points, so they do not contribute any residues.  Hence
\begin{equation*}
R_2 = \frac{1}{2 \pi i} \int_{(0)} 
\mathrm{Res}_{\nu_2 = \frac12 + \frac{\nu_1}{2}}(
E_0^*(g, \nu_1, \nu_2) H(\nu_1, \nu_2))  d \nu_1.
\end{equation*}
For $R_6$ we shift contours to $\sigma_2 = 1/2$; 
the only pole along this path is a removable singularity at $(1,1/2)$, so 
\begin{equation*}
R_{6}= \frac{1}{2 \pi i} \int_{(1/2)} 
\mathrm{Res}_{\nu_1 = 1}(
E_0^*(g, \nu_1, \nu_2) H(\nu_1, \nu_2))  d \nu_2.
\end{equation*}
Finally, for $R_{9}$ we directly shift contours to $\sigma_1 = 1/2$ without crossing any poles, giving
\begin{equation*}
R_{9}= \frac{1}{2 \pi i} \int_{(1/2)} 
\mathrm{Res}_{\nu_2 = 1/2}(
E_0^*(g, \nu_1, \nu_2) H(\nu_1, \nu_2))  d \nu_1.
\end{equation*}

To complete the proof, we argue that $R_{11}' = R_2 = R_6 = R_9 = 0$.  Let us examine $R_{11}'$ in detail.
The Weyl element $w_{11} = s_{\beta} s_{\alpha} s_{\beta}$ maps the line $L_{11}$ to itself.  Letting $F(\nu_1, \nu_2) = E_0^*(g, \nu_1, \nu_2) H(\nu_1, \nu_2)$ (suppressing $g$ in the notation), we have
\begin{equation*}
F(\nu_1 - 2 \nu_2, - \nu_2) = F(w_{11}(\nu_1, \nu_2)) =
- F(\nu_1, \nu_2), 
\end{equation*}
since $E_0^*$ is invariant under $w_{11}$, while $H$ changes sign, since $\det(w_{11}) = -1$.  Hence
\begin{equation*}
\mathrm{Res}_{\nu_1 = 1/2 + \nu_2}(
F(\nu_1, \nu_2))
= \mathrm{Res}_{u=0} F(1/2 + \nu_2 + u, \nu_2)
= - \mathrm{Res}_{u=0} F(1/2 - \nu_2 + u, - \nu_2).
\end{equation*}
Hence $R_{11}$ is the integral of an odd function, so it therefore vanishes.

The case of $R_2$ is very similar to that of $R_{11}$, using that $s_{\alpha} s_{\beta} s_{\alpha}$ fixes $L_2$.  Precisely, 
for $R_2$, we use $F(\nu_1, \nu_2) = -F(-\nu_1, \nu_2 - \nu_1)$ so 
$$\mathrm{Res}_{\nu_2 = \tfrac12 + \tfrac{\nu_1}{2}} F(\nu_1, \nu_2) = 
\mathrm{Res}_{u=0} F(\nu_1, \tfrac12 + \tfrac{\nu_1}{2} + u) 
= -\mathrm{Res}_{u=0} F(-\nu_1, \tfrac12 - \tfrac{\nu_1}{2} + u).$$
Hence $R_2$ is an integral of an odd function, so it vanishes.

For $R_6$ and $R_9$, we will use that
$s_{\beta}$ fixes $L_6$, and $s_{\alpha}$ fixes $L_9$. 
For $R_6$, we have that $F(\nu_1, \nu_2) = - F(\nu_1, \nu_1 - \nu_2)$, 
so
$$
\mathrm{Res}_{\nu_1 = 1} F(\nu_1, \nu_2)
= \mathrm{Res}_{u=0} F(1+u, \nu_2) 
 = - \mathrm{Res}_{u=0} F(1+u, 1+u - \nu_2).
$$
We next argue that
\begin{equation*}
\int_{(1/2)} \mathrm{Res}_{u=0} F(1+u, 1+u - \nu_2) d\nu_2
= \int_{(1/2)} \mathrm{Res}_{u=0} F(1+u, 1 - \nu_2) d\nu_2.
\end{equation*}
Here we can view the left hand side as a double integral of the form
\begin{equation*}
\frac{1}{2 \pi i} \int_{(1/2)} \oint_{|w| = \varepsilon} F(1+w, 1+w - \nu_2) \frac{dw}{w} d\nu_2.
\end{equation*}
The point is that 
the only poles in a neighborhood of
$\sigma_1 = 1$ and $\sigma_2 = 1/2$ is the line $\nu_1 = 1$.  Hence we can change variables $\nu_2 \rightarrow \nu_2 + w$ and shift the $\nu_2$-contour back to $1/2$, without encountering any poles.  Therefore, $R_6$ is the integral of an odd function, and hence it vanishes.

The case of $R_9$ is very similar to that of $R_6$.
We use
$F(\nu_1, \nu_2) = - F(2 \nu_2 - \nu_1, \nu_2)$, so
$$
\mathrm{Res}_{\nu_2 = 1/2} F(\nu_1, \nu_2)
= \mathrm{Res}_{u=0} F(\nu_1, 1/2+u) 
 = - \mathrm{Res}_{u=0} F(1+2u - \nu_1, 1/2+u),
$$
and similarly to the case of $R_6$, inside the integral we can replace $\mathrm{Res}_{u=0} F(1+2u-\nu_1, 1/2+u)$ by $\mathrm{Res}_{u=0} F(1-\nu_1, 1/2+u)$.  Hence $R_9 = 0$.
\end{proof}

\subsection{A basic Rankin-Selberg estimate}
\label{section:RankinSelbergBound}
Define an equivalence relation 
on $\Lambda^{+}$ by $M_1 \sim M_2$ if
there exists $\gamma \in \mathrm{GL}_2(\mz)$ so that $\gamma M_1 \gamma^t = M_2$.
\begin{myprop}\label{prop.RSBound}
Let $F$ be a Siegel cusp form of weight $k$ for $\Sp_4(\mz)$.
We have
\begin{equation*}
\sum_{\substack{M \in \Lambda^{+}/\{\sim\} \\ \det(M) \leq X }} \frac{|a_F(M)|^2}{\det(M)^{k-\frac32}} \ll X^{\frac32+\varepsilon}.
\end{equation*}
\end{myprop}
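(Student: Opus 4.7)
The plan is to run a Rankin-Selberg unfolding against the degenerate Siegel-parabolic Eisenstein series
\[
E(g, \nu) := \sum_{\gamma \in (P_{\alpha} \cap \Gamma) \backslash \Gamma} I_{\alpha}(\gamma g, \nu) = \sum_{\gamma} \det(\overline{m}_{\alpha}(\gamma g))^{\nu + 3/2},
\]
which is absolutely convergent for $\mathrm{Re}(\nu) > 3/2$, and to study the integral
\[
I(\nu) := \int_{\Gamma \backslash \mh_2} |F(Z)|^2 (\det Y)^k\, E(g, \nu)\, d\mu.
\]
Since $P_{\alpha} \cap \Gamma = \{n(X) m(\gamma) : X \in \M_2^{\mathrm{sym}}(\mz),\ \gamma \in \GL_2(\mz)\}$, a fundamental domain for $(P_{\alpha} \cap \Gamma) \backslash \mh_2$ is parametrized by $X$ modulo $\M_2^{\mathrm{sym}}(\mz)$ and $Y$ modulo the action $Y \mapsto \gamma Y \gamma^t$ of $\GL_2(\mz)$ on $\M_2(\mr)^{\mathrm{sym}, >0}$. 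Using $\det(\overline{m}_{\alpha}(g)) = (\det Y)^{1/2}$ and unfolding the Eisenstein series gives
\[
I(\nu) = \int_{X \in \M_2^{\mathrm{sym}}(\mz)\backslash \M_2^{\mathrm{sym}}(\mr)} \int_{Y \in \GL_2(\mz)\backslash \M_2(\mr)^{\mathrm{sym},>0}} |F(X+iY)|^2 (\det Y)^{k+(\nu+3/2)/2-3}\, dX\, dY.
\]

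Next I would insert the Fourier expansion \eqref{eq:SiegelFourierExpansion} into $|F|^2$ and perform the $X$-integral over its torus, which collapses the double Fourier sum to $M_1 = M_2$. Grouping the resulting $M$-sum by $\sim$-equivalence classes (noting that $|a_F(M)|^2$ is $\sim$-invariant since the automorphy of $F$ under $m(\gamma)$ yields $a_F(\gamma^t M \gamma) = (\det \gamma)^k a_F(M)$) and changing variables $Y \mapsto \gamma^{-t} Y \gamma^{-1}$ (whose Jacobian is $\pm 1$) unfolds the $Y$-integral to all of $\M_2(\mr)^{\mathrm{sym}, >0}$, with overall factor $1/|\mathrm{Stab}(M)|$. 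Evaluating the resulting Siegel gamma integral
\[
\int_{\M_2(\mr)^{\mathrm{sym}, >0}} e^{-\mathrm{Tr}(MY)}(\det Y)^{t-3/2}\, dY = \pi^{1/2}\,\Gamma(t)\Gamma(t-\tfrac12)(\det M)^{-t}
\]
at $t = t(\nu) := k + \nu/2 - 3/4$ produces the factorization
\[
I(\nu) = G(\nu)\,\mathcal{D}(t(\nu)), \qquad \mathcal{D}(t) := \sum_{[M] \in \Lambda^{+}/\sim} \frac{|a_F(M)|^2}{|\mathrm{Stab}(M)|\,(\det M)^t},
\]
where $G(\nu)$ is an explicit nonvanishing ratio of gamma functions and powers of $4\pi$.

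Finally, I would feed in the analytic theory of $E(g, \nu)$: it extends meromorphically to $\nu \in \mc$, with rightmost pole at $\nu = 3/2$, simple, and with residue a positive constant function of $g$. This is a classical property of the degenerate Siegel Eisenstein series on $\Sp_4$, and can also be derived from Theorem \ref{thm:EisensteinSeriesMinimalProperties} together with Lemma \ref{lemma:EalphaVsE0} by realizing $E(g, \nu)$ as the residue at $s = 1$ of $E_{\alpha}(g, \nu, E_s) = E_0(g, s + \nu - 1/2, \nu)$. Consequently $I(\nu)$ has a simple pole at $\nu = 3/2$ with residue proportional to $\|F\|^2$, and hence $\mathcal{D}(t)$ has a simple pole at $t = k$ and is holomorphic for $\mathrm{Re}(t) > k$ otherwise. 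Because the coefficients of $\mathcal{D}$ are nonnegative, Landau's theorem gives the trivial majorization
\[
\sum_{\substack{[M] \in \Lambda^{+}/\sim \\ \det M \leq X}} \frac{|a_F(M)|^2}{|\mathrm{Stab}(M)|} \leq X^{k+\varepsilon}\, \mathcal{D}(k+\varepsilon) \ll X^{k+\varepsilon},
\]
and partial summation converts this into the desired bound $\sum_{[M]:\det M \leq X} |a_F(M)|^2/\det(M)^{k-3/2} \ll X^{3/2+\varepsilon}$.

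The main obstacle in this outline is the pole-location step for $E(g, \nu)$: one needs both the meromorphic continuation up to $\nu = 3/2$ and the verification that the leading residue is an honest constant (rather than an auxiliary Eisenstein series, which would spoil the nonnegative-coefficients Landau argument). Everything else, namely the parabolic unfolding, the $X$-torus Fourier inversion, Siegel's gamma integral, and the final Landau-type bound, is routine.
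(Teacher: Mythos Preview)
Your proposal is correct and follows essentially the same Rankin--Selberg route as the paper, but the paper is much terser: it simply cites Maass \cite{Maass} (see also \cite[Prop.~6]{Sturm}) for the absolute convergence of $\sum_{[M]} |a_F(M)|^2 \det(M)^{-s}$ in $\mathrm{Re}(s)>k$, which is precisely the output of your unfolding computation, and then applies the same Rankin-trick majorization you use at the end. Your invocation of Landau's theorem is unnecessary, since absolute convergence of the Eisenstein series for $\mathrm{Re}(\nu)>3/2$ already gives convergence of $\mathcal D(t)$ for $\mathrm{Re}(t)>k$ directly from the unfolded integral; also note that the stabilizer factors $|\mathrm{Stab}(M)|$ are uniformly bounded for positive definite binary forms, so dropping them is harmless.
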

%Remark.  For comparison, note that
%\begin{equation}
%\sum_{\substack{T \in \Lambda^{+} \\ \mathrm{Tr}(T) \leq X }} 1 \ll X^{3+\varepsilon}.
%\end{equation}
\begin{proof}
Maass \cite{Maass} (for an English reference, see \cite[Prop. 6]{Sturm}) showed that
\begin{equation*}
\sum_{M \in \Lambda^{+}/\{\sim\}} \frac{|a_F(M)|^2}{\det(M)^s}
\end{equation*}
converges absolutely for $\mathrm{Re}(s) > k$.  Hence using Rankin's trick,
\begin{equation*}
\sum_{\substack{M \in \Lambda^{+}/\{ \sim \} \\ \det(M) \leq X }} \frac{|a_F(M)|^2}{\det(M)^{k-\frac32}}
\leq 
 \sum_{\substack{M \in \Lambda^{+}/\{ \sim \} \\ \det(M) \leq X }} \frac{|a_F(M)|^2}{\det(M)^{k-\frac32}}
 \frac{X^{3/2+\varepsilon}}{\det(M)^{3/2+\varepsilon}} \ll X^{3/2+\varepsilon}. \qedhere
\end{equation*}
\end{proof}
% Remark. Recall that the standard reduction theory of binary quadratic forms means that every equivalence class of $M = (\begin{smallmatrix} a_1 & a_2/2 \\ a_2/2 & a_3 \end{smallmatrix})$ contains a representative with $0 \leq a_2 \leq a_1 \leq a_3$.  Hence an alternative way to write this bound is
% \begin{equation}
% \sum_{\substack{M = (\begin{smallmatrix} a_1 & a_2/2 \\ a_2/2 & a_3 \end{smallmatrix}) \\ 0 \leq a_2 \leq a_1 \leq a_3 \\ 0 < \det(M) \leq X}}
% \frac{|a_F(M)|^2}{\det(M)^{k-\frac32}} \ll X^{3/2+\varepsilon}.
% \end{equation}
% Note that the number of matrices $M$ being summed over above is $X^{3/2 + o(1)}$.

\begin{mycoro}
\label{coro:RankinSelbergVariant}
We have
\begin{equation*}
\sum_{\substack{M   \in \Lambda^{+} \\ \|M \|_{\infty} \leq W}} 
\frac{|a_F(M)|^2}{\det(M)^{k-\frac32}} \ll W^{3+\varepsilon}.
\end{equation*}
\end{mycoro}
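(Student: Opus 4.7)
The plan is to group the sum over $M$ by the equivalence relation $\sim$ of Proposition \ref{prop.RSBound}, using that both $|a_F(M)|$ and $\det(M)$ are class invariants (the former from the standard transformation law $a_F(\gamma M \gamma^t) = (\det \gamma)^k a_F(M)$ for $\gamma \in \mathrm{GL}_2(\mz)$). Writing $M_0 = (\begin{smallmatrix} a_0 & b_0/2 \\ b_0/2 & c_0 \end{smallmatrix})$ for a Minkowski-reduced representative (so that $|b_0| \leq a_0 \leq c_0$) and setting $d := \det(M_0)$, I would write
\begin{equation*}
S := \sum_{\substack{M \in \Lambda^{+} \\ \|M\|_{\infty} \leq W}} \frac{|a_F(M)|^2}{\det(M)^{k-\frac{3}{2}}} = \sum_{(M_0)} \frac{|a_F(M_0)|^2}{d^{k-\frac{3}{2}}} N_W(M_0),
\end{equation*}
where $N_W(M_0) := |\{M \sim M_0 : \|M\|_{\infty} \leq W\}|$, and the outer sum is over Minkowski-equivalence classes.

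The heart of the argument is a two-sided bound on $N_W(M_0)$ in terms of $a_0$ and $d$. For the upper bound, I write $M = g M_0 g^t$ with $g \in \mathrm{GL}_2(\mz)$; the diagonal constraints $M_0(g_i) \leq W$ on the rows $g_i$ of $g$, combined with the eigenvalue estimate $\lambda_{\min}(M_0) \geq a_0/2$ (a direct calculation from $|b_0| \leq a_0 \leq c_0$), confine each row of $g$ to a disk of radius $\ll \sqrt{W/a_0}$; counting $g \in \mathrm{GL}_2(\mz)$ with $\det g = \pm 1$ and both rows in such a disk yields $N_W(M_0) \ll W/a_0$. For the complementary lower bound on $a_0$, any $M \sim M_0$ with $\|M\|_{\infty} \leq W$ satisfies $\lambda_{\max}(M) \leq \mathrm{Tr}(M) \leq 2W$, hence $\lambda_{\min}(M) \geq d/(2W)$; moreover $a_0 = \min_{v \in \mz^2 \setminus 0} M(v) \geq \lambda_{\min}(M)$ because every nonzero integer vector has Euclidean norm at least $1$. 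Combining, $N_W(M_0) \ll \min(W, W^2/d)$.

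Given this multiplicity bound, I would split the outer sum at $d = W$. For $d \leq W$, use $N_W(M_0) \ll W$ and Proposition \ref{prop.RSBound} with $X = W$, yielding a contribution $\ll W \cdot W^{3/2+\varepsilon} = W^{5/2+\varepsilon}$. For $W < d \leq W^2$ (the latter bound forced by $\|M\|_{\infty} \leq W$), use $N_W(M_0) \ll W^2/d$ and decompose dyadically: the range $d \asymp D$ contributes $\ll (W^2/D) \cdot D^{3/2+\varepsilon} = W^2 D^{1/2+\varepsilon}$, which is dominated by the endpoint $D \sim W^2$, producing $\ll W^{3+\varepsilon}$. Summing the two ranges yields the claimed bound $S \ll W^{3+\varepsilon}$.

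The main technical step will be proving the multiplicity bound $N_W(M_0) \ll W/a_0$ cleanly. The count requires using that $g_1$ must be primitive (forced by $\det g = \pm 1$) so that valid $g_2$'s form a single line $g_2^{(0)} + \mz \cdot g_1$; for fixed $g_1$ of norm $\leq R := \sqrt{2W/a_0}$, this gives $O(R/\|g_1\| + 1)$ choices for $g_2$, and summing over primitive $g_1$ of norm $\leq R$ recovers $O(R^2) = O(W/a_0)$.
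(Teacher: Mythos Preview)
Your proposal is correct and follows essentially the same strategy as the paper: group the sum by $\sim$-equivalence classes, bound the multiplicity of each class by roughly $W^2/\det(M)$, and feed this into Proposition~\ref{prop.RSBound}. The only notable differences are in execution: the paper obtains the multiplicity bound by completing the square to show each entry of $\gamma$ is $\ll W/\sqrt{\det M}$ and then quoting $\#\{\gamma \in \mathrm{GL}_2(\mz): \|\gamma\|_\infty \leq X\} \ll X^{2+\varepsilon}$, whereas you pass through Minkowski reduction and the eigenvalue inequality $\lambda_{\min}(M_0) \geq a_0/2$ to get the slightly sharper $N_W(M_0) \ll W/a_0$ (which you then relax to $W^2/d$ anyway); and the paper closes with a one-line Rankin-trick inequality rather than your dyadic split at $d=W$, but these lead to the same bound.
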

\begin{proof}
We have
\begin{equation}
\label{eq:CorollaryRankinSelbergBoundFirstLine}
\sum_{\substack{M   \in \Lambda^{+} \\ \|M \|_{\infty} \leq W}} 
\frac{|a_F(M)|^2}{\det(M)^{k-\frac32}}
= 
\sum_{\substack{M   \in \Lambda^{+}/\{ \sim \} \\ \|M \|_{\infty} \leq W}} 
\frac{|a_F(M)|^2}{\det(M)^{k-\frac32}}
\sum_{\substack{\gamma \in \mathrm{GL}_2(\mz) \\ \| \gamma M \gamma^t \|_{\infty} \leq W}} 1.
\end{equation}
Say $\gamma = (\begin{smallmatrix} a & b \\ c & d \end{smallmatrix})$ and $M = (\begin{smallmatrix} m_1 & m_2/2 \\ m_2/2 & m_3 \end{smallmatrix})$.  The upper-left and lower-right entries of $\gamma M \gamma^t$ are
$m_1 a^2 + m_2 ab + m_3 b^2$ and $m_1 c^2 + m_2 cd + m_3 d^2$, respectively.  By completing the square,
\begin{equation*}
    m_1 a^2 + m_2 ab + m_3 b^2
    = m_1 \Big(a + \frac{m_2 b}{2m_1}\Big)^2 + \frac{\det(M)}{m_1} b^2,
\end{equation*}
so that the condition $\|\gamma M \gamma^t \|_{\infty} \leq W$ implies that $|b| \leq \frac{\sqrt{m_1 W}}{\sqrt{\det(M)}} \leq \frac{W}{\sqrt{\det(M)}}$.  The same bound holds for each of $a$, $c$, and $d$.  It is well-known that the number of $\gamma \in \mathrm{GL}_2(\mz)$ with $\|\gamma \|_{\infty} \leq X$ is $\ll X^{2+\varepsilon}$.  Hence the inner sum over $\gamma$ appearing in \eqref{eq:CorollaryRankinSelbergBoundFirstLine} is $\ll \frac{W^{2+\varepsilon}}{\det(M)}$.  Therefore,
\begin{equation*}
    \sum_{\substack{M   \in \Lambda^{+} \\ \|M \|_{\infty} \leq W}} 
\frac{|a_F(M)|^2}{\det(M)^{k-\frac32}}
\ll 
\sum_{\substack{M   \in \Lambda^{+}/\{\sim \} \\ \det(M) \leq W^2}} 
\frac{|a_F(M)|^2}{\det(M)^{k-\frac32}} \frac{W^{2+\varepsilon}}{\det(M)}
\ll 
\sum_{\substack{M   \in \Lambda^{+}/\{ \sim \}\\ \det(M) \leq W^2}} 
\frac{|a_F(M)|^2}{\det(M)^{k-\frac32}} \frac{W^{3+\varepsilon}}{\det(M)^{3/2+\varepsilon}},
\end{equation*}
which is bounded by $O(W^{3+\varepsilon})$ as in the proof of Proposition \ref{prop.RSBound}.
\end{proof}

\section{Shifted convolution sums }
In this section we elaborate on the calculation of the triple product integral $\langle F P_Q(\cdot, \phi), G \rangle$.  In Section \ref{section:basics}, we simplify this integral as much as possible, and derive some of the basic properties of the resulting integral transforms, in order to interpret the triple product as a shifted convolution sum of length $\approx N$.  In Section \ref{section:alternateUnfolding}, we give the alternative expression for the triple product by inserting $1$ as an incomplete Eisenstein series, and unfolding with it instead.  
\subsection{Basics}
\label{section:basics}
We begin this section with a basic unfolding result.
\begin{myprop}[Unfolding]
\label{prop:unfoldingBasicPoincare}
Suppose $F, G \in S_k(\Gamma)$.
Let $\phi=\phi_N$ be as in \eqref{eq:phiNdef}, 
and let $P_{Q}(Z, \phi)$ be as in \eqref{eq:PoincareSeriesDefinition}. 
%Define $\phi_1$ by $\phi(R) = \exp(2 \pi \mathrm{Tr}(Q Y)) \phi_1(R)$, where $Y = R R^t$.  
Then
\begin{equation*}
\langle F P_{Q}(\cdot, \phi), G \rangle =
\sum_{\substack{M_1, M_2 \in \Lambda^+ \\ M_1 + Q = M_2}}
a_F(M_1) \overline{a_G(M_2)} \cdot I(M_1 +M_2, \phi),
\end{equation*}
where $I(M, \phi)$ is the Laplace transform of $\phi$ (times a power of determinant) on the space $\mathrm{M}_2(\mr)^{\mathrm{sym}, >0}$. Explicitly,
%{\color{red}$I(M,\phi)$ is the Laplace transform of $\phi(Y)\det(Y)^{k-3}$ on $\Lambda^+$. Expressing in terms of the coordinates \eqref{eq:YvsAcoordinates}, this is given by} 
\begin{equation}
\label{eq:PoincareSeriesIntegralTransform}
I(M ,\phi) = 
\int_{Y\in \mathrm{M}_2(\mr)^{\mathrm{sym}, >0}}
\exp(-2\pi \mathrm{Tr}(MY))
\phi(Y)\det(Y)^{k-3}dY.
\end{equation}
\end{myprop}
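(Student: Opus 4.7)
The plan is to carry out a standard unfolding followed by Fourier expansion, and then an orthogonality computation on the compact torus. First, I would observe that $F(Z)\overline{G(Z)}(\det Y)^k$ together with the measure $d\mu$ is $\Gamma$-invariant: indeed, for $\gamma = (\begin{smallmatrix} A & B \\ C & D\end{smallmatrix}) \in \Gamma$, the automorphy factors of $F$ and $\overline{G}$ multiply to $|\det(CZ+D)|^{2k}$, which is canceled by $(\det \mathrm{Im}(\gamma Z))^k = |\det(CZ+D)|^{-2k}(\det Y)^k$. Since $P_Q(Z,\phi)$ is defined as a sum over $\Gamma_\infty\backslash\Gamma$, and $F,G \in S_k(\Gamma)$ ensure all the terms are of rapid decay in the cusps (so convergence is not an issue), unfolding gives
\begin{equation*}
\langle F P_Q(\cdot,\phi),G\rangle = \int_{\Gamma_\infty\backslash\mh_2} F(Z)\overline{G(Z)}\, e(\mathrm{Tr}(QX))\, \phi(\iota^{-1}(Y))\, (\det Y)^{k-3}\, dX\, dY.
\end{equation*}

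Next I would parameterize $\Gamma_\infty\backslash\mh_2$ by pairs $(X,Y)$ where $X$ ranges over a fundamental domain for $\M_2^{\mathrm{sym}}(\mz)\backslash\M_2^{\mathrm{sym}}(\mr)$ (concretely, the cube $[0,1]^3$ in the coordinates $(x_1,x_2,x_3)$), and $Y$ ranges over $\M_2^{\mathrm{sym},>0}(\mr)$. Substituting the Fourier expansion \eqref{eq:SiegelFourierExpansion} for both $F$ and $\overline{G}$ produces
\begin{equation*}
\sum_{M_1,M_2\in \Lambda^+} a_F(M_1)\overline{a_G(M_2)}\, e(\mathrm{Tr}((M_1-M_2)X))\, \exp(-2\pi\mathrm{Tr}((M_1+M_2)Y)),
\end{equation*}
which I would combine with the twisting character $e(\mathrm{Tr}(QX))$ and integrate in $X$ first.

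The $X$-integral is a standard orthogonality calculation: for $M \in \Lambda$ with $M = (\begin{smallmatrix} m_1 & m_2/2 \\ m_2/2 & m_3\end{smallmatrix})$ we have $\mathrm{Tr}(MX) = m_1 x_1 + m_2 x_2 + m_3 x_3$, so
\begin{equation*}
\int_{[0,1]^3} e(\mathrm{Tr}((M_1 - M_2 + Q)X))\, dx_1\, dx_2\, dx_3 = \delta(M_1 + Q = M_2).
\end{equation*}
This collapses the double sum to the shifted diagonal $M_2 = M_1 + Q$. The surviving $Y$-integral is precisely the Laplace transform \eqref{eq:PoincareSeriesIntegralTransform} evaluated at $M = M_1 + M_2$, with $\phi(\iota^{-1}(Y))$ reinterpreted as the function $\phi(Y)$ on $\M_2^{\mathrm{sym},>0}(\mr)$ using the smooth bijection $\iota$. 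Assembling the pieces yields the claimed identity.

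There is no substantive obstacle here; the only mild bookkeeping point is keeping track of the two lattices in play, namely $\Lambda$ for the Fourier indices (half-integral) and $\Lambda' = \M_2^{\mathrm{sym}}(\mz)$ for the translations in $\Gamma_\infty$. In coordinates these are compatible because $\mathrm{Tr}(MX)$ pairs integer-valued $m_1, m_2, m_3$ (the entries of $M \in \Lambda$ doubled appropriately) with the coordinates $(x_1, x_2, x_3)$ on the torus $(\mr/\mz)^3$, so the standard orthogonality relations apply verbatim.
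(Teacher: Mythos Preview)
The proposal is correct and follows essentially the same approach as the paper: apply the unfolding identity \eqref{eq:PoincareSeriesUnfoldingWithYcoord} with $f(Z) = F(Z)\overline{G}(Z)(\det Y)^k$, insert the Fourier expansions of $F$ and $G$, and integrate over $X \in (\mr/\mz)^3$ using orthogonality of characters. Your write-up simply spells out in detail what the paper compresses into a single sentence.
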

\begin{proof}
Let $a_F(M)$ and $a_G(M)$ be the Fourier coefficients of $F$ and $G$ as in \eqref{eq:SiegelFourierExpansion}.
We apply \eqref{eq:PoincareSeriesUnfoldingWithYcoord} with $f(Z) = F(Z) \overline{G}(Z) (\det Y)^{k}$ (which is $\Gamma$-invariant), and
perform the $X$-integral, immediately giving the result.
% \begin{multline}
% \langle F P_{Q}(\cdot, \phi), G \rangle 
% =
% \sum_{\substack{M_1, M_2 \in \Lambda^+ \\ M_1 + Q = M_2}}
% a_F(M_1) \overline{a_G(M_2)}
% \\
% \times
% \int_{r_1, r_2 \in \mr_{>0}, u \in \mr}
% \exp(-2\pi \mathrm{Tr}((M_1+M_2)Y)
% \phi((\begin{smallmatrix} 1 & u \\ & 1 \end{smallmatrix}) (\begin{smallmatrix} \sqrt{r_1} & \\ & \sqrt{r_2} \end{smallmatrix}))
% (r_1 r_2)^k
% \frac{du dr_1 dr_2}{r_1^3 r_2^2}.
% \end{multline}
\end{proof}

Remark: we can also express $I(M,\phi)$ in terms of the coordinates \eqref{eq:YvsAcoordinates} by 
\begin{equation*}
I(M ,\phi) = 
\int_{r_1, r_2 \in \mr_{>0}, u \in \mr}
\exp(-2\pi \mathrm{Tr}(MY))
\phi((\begin{smallmatrix} 1 & u \\ & 1 \end{smallmatrix}) (\begin{smallmatrix} \sqrt{r_1} & \\ & \sqrt{r_2} \end{smallmatrix}))
(r_1 r_2)^k
\frac{du dr_1 dr_2}{r_1^3 r_2^2}.
\end{equation*}

\begin{mylemma}[Simplification]
\label{lemma:PoincareSeriesIntegralSimplified}
Let $\phi=\phi_N$ be as in \eqref{eq:phiNdef}.
Suppose that $M = (\begin{smallmatrix} a_1 & a_2/2 \\ a_2/2 & a_3 \end{smallmatrix}) > 0$.
Then $I(M, \phi)$ defined by \eqref{eq:PoincareSeriesIntegralTransform} satisfies
$I(M, \phi) = \det(M)^{-k + \frac32} I_1(M, \phi)$, where
\begin{multline*}
I_1(M, \phi) = 
\int_{r_1, r_2 \in \mr_{>0}, u \in \mr}
\exp(-2 \pi (  r_1 + r_2 + u^2 r_2))
\\
\varphi(\frac{u \sqrt{\det(M)}}{a_1} - \frac{a_2}{2a_1}, \frac{Nr_1}{a_1}, \frac{N a_1 r_2}{\det(M)})
r_1^{k-2} r_2^{k-1}
\frac{du dr_1 dr_2}{r_1 r_2}.
\end{multline*}
\end{mylemma}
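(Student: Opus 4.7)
The plan is a direct change of variables in the integral representation from the remark following Proposition \ref{prop:unfoldingBasicPoincare}, namely
\begin{equation*}
I(M,\phi)=\int \exp(-2\pi\mathrm{Tr}(MY))\,\varphi(u,Nr_1,Nr_2)\,(r_1r_2)^k\,\frac{du\,dr_1\,dr_2}{r_1^3 r_2^2}.
\end{equation*}
The only real content is to linearize the exponent so that it becomes the universal form $r_1+r_2+u^2r_2$, and then track the resulting $a_1$ and $\det(M)$ factors.

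First, I would expand $\mathrm{Tr}(MY)$ in the coordinates \eqref{eq:YvsAcoordinates} to get $\mathrm{Tr}(MY)=a_1 r_1+r_2(a_1 u^2+a_2 u+a_3)$, and complete the square in $u$. Using $4a_1a_3-a_2^2=4\det(M)$, this becomes
\begin{equation*}
\mathrm{Tr}(MY)= a_1 r_1 + r_2 a_1\Bigl(u+\tfrac{a_2}{2a_1}\Bigr)^2 + r_2\tfrac{\det(M)}{a_1}.
\end{equation*}
This suggests the substitution $r_1\mapsto a_1 r_1$, $r_2\mapsto \tfrac{\det(M)}{a_1}r_2$, and $u\mapsto \tfrac{\sqrt{\det(M)}}{a_1}u-\tfrac{a_2}{2a_1}$, which sends the exponent to $r_1+r_2+u^2 r_2$ as desired (and leaves the domains unchanged, since the $u$-substitution is an affine isomorphism of $\mr$ and the $r_i$-substitutions are positive scalings of $\mr_{>0}$).

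Next I would apply this substitution to the integral, carefully bookkeeping the other factors. The Jacobian contributes $\tfrac{1}{a_1\sqrt{\det(M)}}$. The weight $(r_1 r_2)^k/(r_1^3 r_2^2)=r_1^{k-3}r_2^{k-2}$ transforms to $a_1^{k-3}\cdot(\det(M)/a_1)^{k-2}\cdot r_1^{k-3} r_2^{k-2}=\tfrac{a_1}{\det(M)^{\,-(k-2)}}\,r_1^{k-3}r_2^{k-2}$ (combining the scalings on $r_1$ and $r_2$). The argument of $\varphi$ picks up exactly $\bigl(\tfrac{u\sqrt{\det(M)}}{a_1}-\tfrac{a_2}{2a_1},\,\tfrac{Nr_1}{a_1},\,\tfrac{Na_1r_2}{\det(M)}\bigr)$.

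Finally, I would combine the Jacobian and weight factors: the explicit $a_1$'s cancel and the $\det(M)$-dependence collects into $\det(M)^{-(k-2)-\frac12}=\det(M)^{-k+\frac{3}{2}}$, which pulls outside the integral. Rewriting $r_1^{k-3}r_2^{k-2}=r_1^{k-2}r_2^{k-1}/(r_1 r_2)$ to match the stated form of $I_1$ yields $I(M,\phi)=\det(M)^{-k+\frac32}I_1(M,\phi)$. There is no genuine obstacle; the one place that requires care is the bookkeeping in the previous step to confirm that every $a_1$ introduced by the substitution really cancels, so that only a power of $\det(M)$ survives as the overall prefactor.
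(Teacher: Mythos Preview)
Your approach is exactly the paper's: complete the square in $u$ and rescale $r_1,r_2,u$ so that the exponent becomes $r_1+r_2+u^2r_2$, then pull out the resulting scalar. One caution on the bookkeeping you flagged as the delicate step: with the substitution that actually normalizes the exponent (old $r_1=\tilde r_1/a_1$, old $r_2=a_1\tilde r_2/\det M$), the weight $r_1^{k-3}r_2^{k-2}$ picks up the factor $a_1^{-(k-3)}\cdot(a_1/\det M)^{k-2}=a_1\det(M)^{-(k-2)}$, not $a_1^{k-3}(\det M/a_1)^{k-2}$ as you wrote; with this correction the $a_1$'s do cancel against the Jacobian $1/(a_1\sqrt{\det M})$ and the overall prefactor $\det(M)^{-k+3/2}$ drops out as claimed.
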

\begin{proof}
Recall that $\mathrm{Tr}(MY) = a_1 (r_1 + u^2 r_2) + a_2 u r_2 + a_3 r_2$, so that
\begin{equation}
\label{eq:ItphiDefinition}
I(M, \phi) = 
\int_{r_1, r_2 \in \mr_{>0}, u \in \mr}
\exp(-2 \pi ( a_1 r_1 + (a_1 u^2 + a_2 u + a_3) r_2))
\varphi(u, Nr_1, Nr_2)
(r_1 r_2)^k
\frac{du dr_1 dr_2}{r_1^3 r_2^2}.
\end{equation}
Now change variables $r_1 \rightarrow r_1/a_1$ and $u \rightarrow \frac{\sqrt{\det(M)}}{a_1} u - \frac{a_2}{2a_1}$, giving
\begin{multline*}
I(M, \phi) = 
\int_{r_1, r_2 \in \mr_{>0}, u \in \mr}
\exp(-2 \pi  (r_1 + (\frac{\det(M)}{a_1} u^2 + \frac{\det(M)}{a_1}) r_2))
\\
\times
 \varphi\Big(\frac{u \sqrt{\det(M)}}{a_1} - \frac{a_2}{2a_1}, \frac{N r_1}{a_1}, N r_2 \Big)
\frac{\det(M)^{1/2}}{a_1} \frac{r_1^{k-2} r_2^{k-1}}{a_1^{k-2}}
\frac{du dr_1 dr_2}{r_1 r_2}.
\end{multline*}
Finally, changing variables $r_2 \rightarrow r_2 \frac{a_1}{\det(M)}$ completes the proof.
\end{proof}

\begin{myprop}[Estimates for the weight function]
\label{prop:PoincareSeriesIntegralEstimates}
Let notation be as in Lemma \ref{lemma:PoincareSeriesIntegralSimplified}.  Suppose that $\varphi(u, t_1, t_2)$ is a fixed Schwartz-class function, supported on $t_1 \asymp t_2 \asymp 1$.
Let $W(a_1, a_2, a_3) = I_1(M, \phi)$, viewed as a function of $a_1$, $a_2$, and $a_3$, restricted by $\det(M) = a_1 a_3 - a_2^2/4 > 0$.  Then
\begin{equation}
\label{eq:Wbound}
W^{(i_1,i_2,i_3)}(a_1, a_2, a_3) \ll 
\Big(1 + \frac{N^2}{\det(M)}\Big)^{i_1+i_2+i_3} \frac{1}{N^{i_1+i_2 + i_3}}
\Big(1 + \frac{a_1}{N}\Big)^{-A} \Big(1 + \frac{a_3}{N}\Big)^{-A}. 
\end{equation}
\end{myprop}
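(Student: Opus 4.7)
The plan is to isolate the $(a_1, a_2, a_3)$-dependence in $W$. Using the factorization $W = I_1 = \det(M)^{k-3/2} I$ provided by Lemma \ref{lemma:PoincareSeriesIntegralSimplified} and the explicit form \eqref{eq:ItphiDefinition} of $I$, the substitution $r_i \mapsto r_i/N$ in both coordinates yields
\[
W(a_1, a_2, a_3) = \det(M)^{k-3/2} N^{-(2k-3)} J(a_1, a_2, a_3),
\]
where
\[
J = \int_{r_1, r_2 > 0,\, u \in \mr} \exp\Big(-\tfrac{2\pi}{N}\bigl(a_1 r_1 + (a_1 u^2 + a_2 u + a_3) r_2\bigr)\Big) \varphi(u, r_1, r_2)\, r_1^{k-3} r_2^{k-2}\, du\, dr_1\, dr_2.
\]
In this normalization, $\varphi$ is compactly supported on $r_1, r_2 \asymp 1$ with Schwartz decay in $u$, and all the $(a_1, a_2, a_3)$-dependence of $J$ sits inside the exponential.

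The first goal is to establish
\[
\bigl|\partial_{a_1}^{i_1} \partial_{a_2}^{i_2} \partial_{a_3}^{i_3} J\bigr| \ll N^{-(i_1+i_2+i_3)} (1+a_1/N)^{-A}(1+a_3/N)^{-A}.
\]
The factor $\exp(-\tfrac{2\pi a_1 r_1}{N})$ with $r_1 \asymp 1$ gives the $(1+a_1/N)^{-A}$ decay directly. For the $a_3$-decay, split into cases: when $a_3 \lesssim a_1$, the claim is implied by the $a_1$-decay; when $a_3 \gg a_1$, split the $u$-integration around the quadratic's minimizer $u_0 = -a_2/(2a_1)$, where $|u_0| \lesssim \sqrt{a_3/a_1}$ by the constraint $a_2^2 \leq 4 a_1 a_3$. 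On $|u - u_0| \gg \sqrt{N/a_1}$ one integrates by parts in $u$ to exploit the Gaussian $\exp(-\tfrac{2\pi}{N} a_1(u-u_0)^2 r_2)$, while on the complementary range one uses the Schwartz decay of $\varphi$ (which for $|u_0|$ large provides $|\varphi| \ll (a_3/a_1)^{-A/2}$, compensating the length $\sqrt{N/a_1}$ of that range). Each partial derivative $\partial_{a_i}$ brings down $O(1/N)$ from the exponential (any polynomial factors in $u$ produced are absorbed by the Schwartz decay of $\varphi$), accounting for the $N^{-(i_1+i_2+i_3)}$ factor.

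Finally, the derivatives of $\det(M)^{k-3/2}$ are handled by Fa\`{a} di Bruno: since $\det M = a_1 a_3 - a_2^2/4$ is a polynomial of degree $2$ in $(a_1, a_2, a_3)$,
\[
\bigl|\partial_{a_1}^{j_1} \partial_{a_2}^{j_2} \partial_{a_3}^{j_3} \det(M)^{k-3/2}\bigr| \ll \det(M)^{k-3/2} (N/\det M)^{j_1+j_2+j_3}
\]
in the effective region $\max(a_1, |a_2|, a_3) \lesssim N$. Combining this with the bound on $J$ via Leibniz's rule and using the elementary observation $N/\det M \leq \tfrac{1}{N}(1+N^2/\det M)$, every derivative of $W$ contributes the claimed factor $\tfrac{1}{N}(1+N^2/\det M)$. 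The residual prefactor $\det(M)^{k-3/2}/N^{2k-3} \leq (a_1 a_3/N^2)^{k-3/2} \leq (1+a_1/N)^{k-3/2}(1+a_3/N)^{k-3/2}$ is absorbed into the Schwartz decay by enlarging $A$. The main obstacle is the $a_3$-decay in the regime $a_3 \gg a_1$ with $\det M \ll a_1 a_3$, where the quadratic's minimum $\det(M)/a_1$ is much smaller than $a_3$; this is overcome by trading the largeness of $|u_0| \asymp \sqrt{a_3/a_1}$ for the Schwartz decay of $\varphi$ at $u_0$.
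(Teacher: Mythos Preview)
Your approach is correct and essentially the same as the paper's: both factor $W=\det(M)^{k-3/2}\cdot(\text{integral})$, bound the integral via the exponential decay in the $r_i$ together with the Schwartz decay of $\varphi$ in $u$, and then handle derivatives by Leibniz on this product (the $\det(M)^{k-3/2}$ piece contributing $N/\det(M)$ per derivative, the integral piece contributing $1/N$).

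The only notable difference is organizational. For the undifferentiated bound, the paper stays in the $I_1$ coordinates of Lemma~\ref{lemma:PoincareSeriesIntegralSimplified}, where the support of $\varphi$ in its second and third arguments forces $r_1\asymp a_1/N$ and $r_2\asymp\det(M)/(a_1N)$; this directly yields three separate decay factors $(1+a_1/N)^{-A}$, $(1+\det(M)/(a_1N))^{-A}$, and $(1+a_2^2/(a_1N))^{-A}$, the latter two combining into $(1+a_3/N)^{-A}$ by the identity $a_3=\det(M)/a_1+a_2^2/(4a_1)$. Your case analysis ($a_3\lesssim a_1$ versus $a_3\gg a_1$, then $\det M\asymp a_1a_3$ versus $\det M\ll a_1a_3$) reaches the same conclusion but is a bit more laborious. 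Note also that on your outer region $|u-u_0|\gg\sqrt{N/a_1}$ you do not actually need integration by parts---the direct Gaussian bound $(1+a_1(u-u_0)^2/N)^{-B}$ from the $r_2$-integral together with the Schwartz decay of $\varphi$ already suffices, and indeed both inputs are required there (the Gaussian alone does not see $a_3$).
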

\begin{proof}
We first consider the bound with $i_1=i_2=i_3=0$.  
By the support of $\varphi$ in terms of $r_1$, we have $r_1 \asymp \frac{a_1}{N}$, and hence $\int_{r_1 \asymp \frac{a_1}{N}} r_1^{k-3} \exp(-2 \pi r_1) dr_1 \ll (1 + \frac{a_1}{N})^{-A}$.  Similarly, for the integral over $r_2$, we have 
\begin{equation}
\label{eq:t2integralbound}
\int_{r_2 \asymp \frac{\det(M)}{N a_1}} r_2^{k-2} \exp(- 2\pi r_2) dr_2 \ll 
\frac{\sqrt{\det(M)}}{\sqrt{a_1 N}}
\Big(1 + \frac{\det(M)}{a_1 N}\Big)^{-A}.
\end{equation}
Next we examine the $u$-integral.  Suppose that the support of $\varphi$ implies $2 \pi r_2 \geq c \frac{\det(M)}{a_1 N}$.  Then $\exp(- 2\pi u^2 r_2) \leq \exp(-c \frac{u^2 \det(M)}{a_1 N} )$, and then
\begin{multline*}
    \intR \exp(-2 \pi r_2 u^2) |\varphi\Big(\frac{u \sqrt{\det(M)} -a_2/2}{a_1}, \cdot \Big)| du
   \\
   \ll \frac{\sqrt{a_1 N}}{\sqrt{\det(M)}} \intR \exp(- cu^2)  
    |\varphi\Big(\frac{u \sqrt{a_1 N} -a_2/2}{a_1}, \cdot \Big)| du
    \ll \frac{\sqrt{a_1 N}}{\sqrt{\det(M)}} \Big(1 + \frac{a_2^2}{a_1 N}\Big)^{-A}.
\end{multline*}
Combining these bounds, we obtain
\begin{equation}
\label{eq:WboundInProof}
    W(a_1, a_2, a_3) \ll 
    \Big(1 + \frac{a_1}{N}\Big)^{-A}
    \Big(1 + \frac{\det(M)}{a_1 N}\Big)^{-A} \Big(1 + \frac{a_2^2}{a_1 N} \Big)^{-A}.
\end{equation}
Note that $(1 + \frac{\det(M)}{a_1 N})^{-1} (1 + \frac{a_2^2}{a_1 N})^{-1}  \leq (1 + \frac{a_3}{N})^{-1}$, so \eqref{eq:WboundInProof} simplifies to give \eqref{eq:Wbound}.

Next we consider derivatives.  For this, it seems easier to return to the expression from \eqref{eq:ItphiDefinition}, prior to the changes of variables.  To aid in the calculations, note that $\frac{\partial}{\partial a_1} \det(M) = a_3$, 
$\frac{\partial}{\partial a_2} \det(M) = -a_2/2$, and $\frac{\partial}{\partial a_3} \det(M) = a_1$.  Hence
\begin{multline*}
\frac{\partial}{\partial a_1} I_1(M, \phi_1) = 
\frac{\partial}{\partial a_1} 
\Big[
\det(M)^{k-\frac32} 
\int_{r_1, r_2 \in \mr_{>0}, u \in \mr}
\exp(-2 \pi ( a_1 r_1 + (a_1 u^2 + a_2 u + a_3) r_2))
\\
\times 
\varphi(u, Nr_1, Nr_2)
(r_1 r_2)^k
\frac{du dr_1 dr_2}{r_1^3 r_2^2}
\Big]
.
\end{multline*}
By the chain rule, this derivative takes the form
\begin{multline*}
\frac{a_3}{\det(M)} (k-\tfrac32) \times I_1(M, \phi)
+ \det(M)^{k-\frac32} \int 
\int_{r_1, r_2 \in \mr_{>0}, u \in \mr}
(- 2 \pi) (r_1 + u^2 r_2)
\\
\times 
\exp(-2 \pi ( a_1 r_1 + (a_1 u^2 + a_2 u + a_3) r_2))
\varphi(u, Nr_1, Nr_2)
(r_1 r_2)^k
\frac{du dr_1 dr_2}{r_1^3 r_2^2}.
\end{multline*}
For the first term, we can re-apply the previous bound on $I_1$, and use that (effectively) $\frac{a_3}{\det(M)} \ll \frac{N}{\det(M)}$.  For the second term, we have that $r_1 \asymp N^{-1}$, $r_2 \asymp N^{-1}$, and (effectively) $u \ll 1$, so the bound is multiplied by $N^{-1}$.  Repeated derivatives follow the same general pattern.
Similar discussions work equally well for the partial derivatives with respect to $a_2$ and $a_3$.
\end{proof}

\subsection{Alternate unfolding}
\label{section:alternateUnfolding}
Let notation be as in Proposition \ref{prop:unfoldingBasicPoincare}.  Let $H$ be as in Theorem \ref{thm:Einc=1}, 
such that $E_0^*(g) = 1$.
Suppose that $P_Q$ has the Fourier expansion
\begin{equation}
\label{eq:PoincareSeriesFourierExpansion}
P_Q(Z, \phi)
= \sum_{T \in \Lambda} a_Q(T, Y, \phi) e(\mathrm{Tr}(TX)).
\end{equation}
\begin{myprop}
\label{prop:ShiftedSumUnfoldedViaEisenstein}
 Let notation be as above.  Using the coordinates \eqref{eq:YvsAcoordinates}, assuming the $\nu_i$ lines of integration are in the region of absolute convergence \eqref{eq:minimalEisensteinSeriesAbsoluteConvergence}, we have
\begin{multline}
\label{eq:FGPQinnerproduct}
\langle F P_Q(\cdot, \phi), G \rangle 
=
\int_{(\sigma_1)} \int_{(\sigma_2)}  
H^*(\nu_1, \nu_2)
\sum_{T + M_1= M_2} a_F(M_1) \overline{a_G}(M_2) 
\int_{
r_1, r_2 \in \mr_{>0}, u \in \mr}
a_{Q}(T, Y, \phi)
\\
\times
\exp(- 2 \pi \mathrm{Tr}((M_1+M_2)Y))
h((\begin{smallmatrix} 1 & u \\ & 1 \end{smallmatrix}))
(r_1/r_2)^{\frac{\nu_1 - \nu_2 + \frac12}{2}}(r_1 r_2)^{\frac{\nu_2}{2} + \frac34}
(r_1 r_2)^k 
 \frac{ du dr_1 dr_2}{r_1^3 r_2^2} 
 \frac{d\nu_1 d \nu_2}{(2 \pi i)^2}.
\end{multline}
\end{myprop}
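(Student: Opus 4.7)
The approach is to insert the identity $E_0^*(g) = 1$ guaranteed by Theorem \ref{thm:Einc=1} into the triple product, unfold against the incomplete Eisenstein series (rather than against $P_Q$), and then perform the torus integral in $X$ using orthogonality of additive characters on $\Lambda$. Concretely, I would define
\[
f(Z) := F(Z) \overline{G(Z)} (\det Y)^k P_Q(Z,\phi),
\]
which is $\Gamma$-invariant; since $F, G$ are cusp forms the factor $F\overline{G}(\det Y)^k$ is bounded on $\Gamma\backslash\mh_2$, and since $\varphi$ is compactly supported (so $\phi=\phi_N$ is compactly supported on $\mathrm{GL}_2^+/\mathrm{SO}_2$) the Poincare series $P_Q(\cdot,\phi)$ is a locally finite, bounded sum. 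Hence $f \in L^\infty(\Gamma\backslash\mh_2)$, and
\[
\langle F P_Q(\cdot,\phi), G\rangle = \int_{\Gamma\backslash\mh_2} f(Z) \cdot E_0^*(g)\, d\mu.
\]
Proposition \ref{prop:unfoldingWithIncompleteEisensteinSeriesIdenticallyOne} then applies directly and converts this to an iterated integral over the six-dimensional strip in $(x_1,x_2,x_3,u,r_1,r_2)$, with an outer contour integral in $(\nu_1,\nu_2)$ and kernel $H^*(\nu_1,\nu_2)$.

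Next I would substitute the Fourier expansions of $F$, $\overline{G}$, and $P_Q$ (the last being \eqref{eq:PoincareSeriesFourierExpansion}). Combining the expansions of $F$ and $\overline{G}$ gives
\[
F(X+iY)\overline{G(X+iY)} = \sum_{M_1, M_2 \in \Lambda^+} a_F(M_1) \overline{a_G(M_2)}\, e(\mathrm{Tr}((M_1-M_2)X))\, \exp(-2\pi\mathrm{Tr}((M_1+M_2)Y)),
\]
and multiplying by $P_Q(X+iY,\phi) = \sum_T a_Q(T,Y,\phi) e(\mathrm{Tr}(TX))$ and integrating $X$ over $(\mz\backslash\mr)^3$ produces a character integral $\int e(\mathrm{Tr}((M_1-M_2+T)X))\, dX$. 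For $M = (\begin{smallmatrix} a_1 & a_2/2 \\ a_2/2 & a_3\end{smallmatrix}) \in \Lambda$ one has $\mathrm{Tr}(MX) = a_1 x_1 + a_2 x_2 + a_3 x_3$, so orthogonality on $[0,1)^3$ collapses the triple sum to the diagonal $T + M_1 = M_2$. Collecting the resulting weights in $Y$ — the factor $(r_1 r_2)^k$ coming from $(\det Y)^k$ in $f$, the exponential $\exp(-2\pi\mathrm{Tr}((M_1+M_2)Y))$, and the measure $h((\begin{smallmatrix} 1 & u \\ & 1 \end{smallmatrix}))(r_1/r_2)^{(\nu_1-\nu_2+1/2)/2}(r_1 r_2)^{\nu_2/2+3/4}\, r_1^{-3}r_2^{-2}\, du\,dr_1\,dr_2$ supplied by Proposition \ref{prop:unfoldingWithIncompleteEisensteinSeriesIdenticallyOne} — produces exactly the right-hand side of \eqref{eq:FGPQinnerproduct}.

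The main technical point is justifying the interchange of the outer $(\nu_1,\nu_2)$-contour integrals, the triple Fourier sum, and the strip integration. This is where the hypothesis that $(\sigma_1,\sigma_2)$ lies in the region \eqref{eq:minimalEisensteinSeriesAbsoluteConvergence} enters: on that region $E_0(g,\nu_1,\nu_2)$ is absolutely convergent and $H^*$ still decays rapidly in vertical strips (as $H$ is bounded in such strips and the completing zeta factors are of polynomial growth); the Fourier expansions of $F$ and $\overline{G}$ are absolutely convergent with exponential decay in $Y$; and the Fourier expansion of $P_Q$ is essentially a finite sum at each $Y$ because of the compact support of $\phi$. Together these permit Fubini at every step, and no further analytic estimates are required beyond what Theorem \ref{thm:Einc=1} and Proposition \ref{prop:unfoldingWithIncompleteEisensteinSeriesIdenticallyOne} already supply.
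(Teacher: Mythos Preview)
Your proof is correct and follows exactly the same route as the paper: insert $E_0^*=1$, apply Proposition~\ref{prop:unfoldingWithIncompleteEisensteinSeriesIdenticallyOne} to $f(Z)=(\det Y)^k F(Z)\overline{G(Z)}P_Q(Z,\phi)$, expand $F$, $\overline{G}$, and $P_Q$ in Fourier series, and integrate over $X$. One small inaccuracy in your justification paragraph: the Fourier expansion of $P_Q$ in $T$ is not ``essentially a finite sum'' --- rather, $P_Q(X+iY,\phi)$ is a smooth periodic function of $X$ (since the defining sum over $\gamma$ is locally finite), so its Fourier coefficients $a_Q(T,Y,\phi)$ decay rapidly in $T$, which is what actually justifies the interchange.
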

\begin{proof}
We begin by writing $\langle F P_Q(\cdot, \phi), G \rangle = \langle F P_Q(\cdot, \phi) E_0^*, G \rangle$, and then use $E_0^*$ to unfold the integral, as in Proposition \ref{prop:unfoldingWithIncompleteEisensteinSeriesIdenticallyOne}.  
We let $f(Z) = (\det Y)^k F(Z) \overline{G}(Z) P_Q(g, \phi)$.  
Applying the Fourier expansion of $P_Q$, as well as the standard Fourier expansions of $F$ and $G$, we obtain the claimed formula.
\end{proof}

\section{The Fourier expansion of the Poincare series}
\label{section:FourierExpansionPoincare}
In this section, we develop various properties of the Fourier coefficients appearing in \eqref{eq:PoincareSeriesFourierExpansion}.  
\subsection{Initial steps}
To begin, we have by definition
\begin{equation*}
a_Q(T, Y, \phi) = \int_{(\mr/\mz)^3} P_Q(Z, \phi) e(- \mathrm{Tr}(TX)) dX.
\end{equation*}
Applying the definition \eqref{eq:PoincareSeriesDefinition}, we have
\begin{equation}
a_Q(T, Y, \phi) = 
\sum_{\gamma \in \Gamma_{\infty} \backslash \Gamma} 
\int_{(\mr/\mz)^3} 
e(\mathrm{Tr}(Q \mathrm{Re}(\gamma Z))) \phi(\iota^{-1}(\mathrm{Im}(\gamma (Z)))
 e(- \mathrm{Tr}(TX)) dX.
\end{equation}
We now follow the method of Kitaoka \cite{Kitaoka}.
His Lemma 1 (in \S 2) states that
\begin{equation*}
\Gamma_{\infty} \gamma \Gamma_{\infty} = \bigcup_{S \in \Lambda'/\theta(\gamma)} \Gamma_{\infty} \gamma (\begin{smallmatrix} 1 & S \\ & 1 \end{smallmatrix}), \qquad \text{(disjoint union)}
\end{equation*}
where $\theta(\gamma) = \{ S \in \Lambda' : \gamma (\begin{smallmatrix} 1 & S \\ & 1 \end{smallmatrix}) \gamma^{-1} \in \Gamma_{\infty} \}$, and where $\Lambda' = \mathrm{M}_2^{\mathrm{sym}}(\mz)$.  Suppose that $\mathfrak{h}$ runs over a complete system of representatives for $\Gamma_{\infty} \backslash \Gamma /\Gamma_{\infty}$.  Kitaoka's Lemma 2 (\S 2) then shows that $\gamma = (\begin{smallmatrix} A & B \\ C & D \end{smallmatrix})$ is parameterized by $C$ and $D \pmod{C \Lambda'}$.  
Note $\gamma \cdot (\begin{smallmatrix} 1 & S \\ 0 & 1 \end{smallmatrix}) (Z)  = \gamma ( Z + S)$.
Hence $a_Q(T,Y, \phi)$ equals
\begin{equation}
\begin{split}
  \sum_{\gamma \in \mathfrak{h}}
\sum_{S \in \Lambda'/\theta(\gamma)} 
\int_{(\mr/\mz)^3}
 e(\mathrm{Tr}(Q \mathrm{Re}(\gamma (Z +S) )) \phi(\iota^{-1}(\mathrm{Im}(\gamma  (Z +S)))  
e(- \mathrm{Tr}(TX)) dX 
\\
\label{eq:aQformulaWithS}
=  \sum_{C}
\sum_{D \shortmod{C \Lambda'}} 
\sum_{S \in \Lambda'/\theta(\gamma)} 
\int_{(\mr/\mz)^3 + S}
 e(\mathrm{Tr}(Q \mathrm{Re}(\gamma (Z) )) \phi(\iota^{-1}(\mathrm{Im}(\gamma (Z)))  
e(- \mathrm{Tr}(TX)) dX,
\end{split}
\end{equation}

where we changed variables $X \rightarrow X- S$, and used that $\mathrm{Tr}(TS) \in \mz$.

Now the sum over $C$ breaks up into those $C$ of rank $0$, $1$, and $2$.  We correspondingly write
$a_Q(T, Y, \phi) = a_Q^{(0)}(T, Y, \phi) + a_Q^{(1)}(T, Y, \phi) + a_Q^{(2)}(T, Y, \phi)$.
% Inserting this decomposition into \eqref{eq:FGPQinnerproduct}, we then obtain \blue{pick a better name than $S_i$}
% \begin{equation}
% \langle F P_G(\cdot, \phi), G \rangle = S_0 + S_1 + S_2,
% \end{equation}
% where
% \begin{multline}
% S_i 
% =
% \int_{(\sigma_1)} \int_{(\sigma_2)}  
% H^*(\nu_1, \nu_2)
% \sum_{T + S= R} a_F(S) \overline{a_G}(R) 
% \int_{
% r_1, r_2 \in \mr_{>0}, u \in \mr}
% a_{Q}^{(i)}(T, Y, \phi)
% \\
% \times
% \exp(- 2 \pi \mathrm{Tr}((S+R)Y))
% h((\begin{smallmatrix} 1 & u \\ & 1 \end{smallmatrix}))
% (r_1/r_2)^{\frac{\nu_1 - \nu_2 + \frac12}{2}}(r_1 r_2)^{\frac{\nu_2}{2} + \frac34}
% (r_1 r_2)^k 
%  \frac{ du dr_1 dr_2}{r_1^3 r_2^2} 
%  \frac{d\nu_1 d \nu_2}{(2 \pi i)^2}.
% \end{multline}

\subsection{Rank \texorpdfstring{$2$}{2}}
We compute along the lines of Kitaoka to obtain the following simplified formula for $a_Q^{(2)}$.
\begin{myprop}
\label{prop:rank2SumOfKloostermanSums}
We have that
\begin{equation*}
a_Q^{(2)}(T, Y, \phi) = \sum_{\det(C) \neq 0}  K(Q, T;C) \mathcal{I}(Q, T, Y, C, \phi),
\end{equation*}
where $K$ is the Kloosterman sum defined in Section \ref{section:KloostermanSumsSymplectic}, and where
\begin{equation}
\label{eq:PoincareSeriesIntegralProp51}
\mathcal{I}(Q, T, Y, C, \phi)
=
\int_{\mr^3}
 e(\mathrm{Tr}[Q C^{-t} \mathrm{Re}(- Z^{-1}) C^{-1} -TX]) \phi(C^{-t} \iota^{-1}(\mathrm{Im}(-Z^{-1})))  
dX.
\end{equation}
\end{myprop}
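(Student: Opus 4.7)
The plan is to specialize the identity \eqref{eq:aQformulaWithS} to the rank $2$ contribution (where $\det C \neq 0$) and follow Kitaoka's manipulations to separate the Kloosterman sum over $D$ from the $D$-independent analytic integral over $X$.

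First, I will show that for $\det C \neq 0$ the stabilizer $\theta(\gamma) = \{0\}$. Using $\gamma^{-1} = \bigl(\begin{smallmatrix} D^t & -B^t \\ -C^t & A^t \end{smallmatrix}\bigr)$ together with the symplectic identity $CD^t = DC^t$, the lower-left block of $\gamma \bigl(\begin{smallmatrix} I & S \\ & I \end{smallmatrix}\bigr) \gamma^{-1}$ equals $-CSC^t$, which vanishes iff $S=0$ when $C$ is invertible. Hence the $S$-sum together with the translates $(\mr/\mz)^3 + S$ reassembles to an unrestricted integral over $\mr^3$, giving
\begin{equation*}
a_Q^{(2)}(T,Y,\phi) = \sum_{\det C \neq 0} \sum_{D \shortmod{C\Lambda'}} \int_{\mr^3} e(\mathrm{Tr}(Q\,\mathrm{Re}(\gamma Z))) \phi(\iota^{-1}(\mathrm{Im}(\gamma Z))) e(-\mathrm{Tr}(TX))\, dX.
\end{equation*}

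Second, I will substitute the closed-form expression for the symplectic action,
\begin{equation*}
\gamma Z = AC^{-1} - C^{-t}(Z + C^{-1}D)^{-1} C^{-1},
\end{equation*}
derived as follows: the identity $(AZ+B)(CZ+D)^{-1} = AC^{-1} + (B - AC^{-1}D)(CZ+D)^{-1}$ combined with $AD^t - BC^t = I$ and $C^{-1}D = D^tC^{-t}$ yields $B - AC^{-1}D = -C^{-t}$, and then $(CZ+D)^{-1} = (Z + C^{-1}D)^{-1}C^{-1}$. Changing variables $X \mapsto X - C^{-1}D$ in the $\mr^3$ integral replaces $Z+C^{-1}D$ by $Z$ everywhere and produces the phase $e(\mathrm{Tr}(TC^{-1}D))$ from $-\mathrm{Tr}(TX)$ (using $\mathrm{Tr}(TC^{-1}D) = \mathrm{Tr}(C^{-1}DT)$). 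The real-part phase factors as
\begin{equation*}
e(\mathrm{Tr}(Q\,\mathrm{Re}(\gamma Z))) = e(\mathrm{Tr}(QAC^{-1}))\, e(-\mathrm{Tr}(QC^{-t}\mathrm{Re}(Z^{-1})C^{-1})),
\end{equation*}
where only the first factor depends on $D$ (through the choice of $A$).

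Third, for the $\phi$ factor I use $\mathrm{Im}(\gamma Z) = -C^{-t}\mathrm{Im}(Z^{-1})C^{-1}$, and the identity $(C^{-t}R)(C^{-t}R)^t = C^{-t}(RR^t)C^{-1}$ to see that $\iota$ is equivariant under left multiplication by $C^{-t}$ (viewing $\mathrm{GL}_2^+(\mr)/\mathrm{SO}_2(\mr)$ as cosets). Hence $\iota^{-1}(\mathrm{Im}(\gamma Z)) = C^{-t}\iota^{-1}(\mathrm{Im}(-Z^{-1}))$, independent of $D$. Collecting all $D$-dependent factors, the $D$-sum becomes
\begin{equation*}
\sum_{D \shortmod{C\Lambda'}} e(\mathrm{Tr}(AC^{-1}Q + C^{-1}DT)) = K(Q,T;C),
\end{equation*}
by cyclicity of trace and the definition in Section \ref{section:KloostermanSumsSymplectic}, and the surviving $X$-integral is precisely $\mathcal{I}(Q,T,Y,C,\phi)$ from \eqref{eq:PoincareSeriesIntegralProp51}. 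There is no genuine obstacle; the main care is bookkeeping, the most delicate point being that the dependence of $e(\mathrm{Tr}(QAC^{-1}))$ on the choice of $A$ for a given $D$ is exactly absorbed into the well-defined Kloosterman sum, which is Kitaoka's \cite[Prop 1]{Kitaoka}.
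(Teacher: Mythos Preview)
Your proposal is correct and follows essentially the same route as the paper: identify $\theta(\gamma)=\{0\}$ for invertible $C$ so the $S$-sum assembles the full $\mr^3$-integral, shift $X\mapsto X-C^{-1}D$, use $B-AC^{-1}D=-C^{-t}$ to put $\gamma Z$ in the form $AC^{-1}-C^{-t}Z^{-1}C^{-1}$, and then peel off the $D$-dependent phases into $K(Q,T;C)$ while the $\iota$-equivariance handles the $\phi$ factor. The only cosmetic difference is that you compute $\theta(\gamma)=\{0\}$ directly from the lower-left block $-CSC^t$, whereas the paper simply cites Kitaoka's Lemma~5, and you derive the closed form for $\gamma Z$ before shifting while the paper shifts first and simplifies afterward; neither changes the substance.
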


\begin{proof}
For $C$ of rank $2$, Kitaoka (Lemma 5, \S 2) shows that $\theta(\gamma) = \{0 \}$, so $S$ runs over $\Lambda'$.  Note $\sum_S \int_{(\mr/\mz)^3 + S} = \int_{\mr^3}$.  Hence
\begin{equation*}
a_Q^{(2)}(T, Y, \phi)
= \sum_{\det(C) \neq 0} \sum_{D \shortmod{C \Lambda}} 
\int_{\mr^3}  e(\mathrm{Tr}(Q \mathrm{Re}(\gamma  ( Z ) )) \phi(\iota^{-1}(\mathrm{Im}(\gamma (Z)))  
e(- \mathrm{Tr}(TX)) dX.
\end{equation*}
Next change variables $Z \rightarrow Z - C^{-1} D$, which is allowable since $C$ has rank $2$.  Note
\begin{equation*}
\gamma (Z - C^{-1} D) = (AZ + B - A C^{-1} D)(CZ)^{-1} 
= A C^{-1} + (B - A C^{-1} D) Z^{-1} C^{-1}.
\end{equation*}
Hence
\begin{equation*}
a_Q^{(2)}(T,Y) 
= \sum_{\det(C) \neq 0} K(Q,T;C) \mathcal{I}(Q, T, C, \phi),
\end{equation*}
%\begin{multline*}
%a_Q^{(2)}(T,Y) = \sum_{\det(C) \neq 0}
%\sum_{D \shortmod{C \Lambda}} 
%e(\mathrm{Tr}(Q A C^{-1} + T C^{-1} D))
%\mathcal{I}(Q, T, C, \phi),
%\int_{\mr^3}
% e(\mathrm{Tr}(Q \mathrm{Re}((B - AC^{-1} D) Z^{-1} C^{-1})) 
%\\
%\times  
% \phi(\iota^{-1}(\mathrm{Im}((B-A C^{-1} D) Z^{-1} C^{-1}))  
%e(- \mathrm{Tr}(TX)) dX,
%\end{multline*}
where
$\mathcal{I}(Q, T, Y, C, \phi)$ is given by
\begin{equation*}
\int_{\mr^3}
 e(\mathrm{Tr}(Q \mathrm{Re}((B - AC^{-1} D) Z^{-1} C^{-1})) \phi(\iota^{-1}(\mathrm{Im}((B-A C^{-1} D) Z^{-1} C^{-1}))  
e(- \mathrm{Tr}(TX)) dX.
\end{equation*}

We claim that $B - A C^{-1} D = - C^{-t}$.
Since $(\begin{smallmatrix} A & B \\ C & D \end{smallmatrix}) \in \mathrm{Sp_4}(\mr)$, we have that $A^t D - C^t B = I$, and since $C$ is invertible, $C^{-t} A^t D - B = C^{-t}$.  We also have that $A^t C = C^t A$, so 
$C^{-t} A^t = AC^{-1}$.  This gives the claim.  Hence
\begin{equation*}
\mathcal{I} = 
\int_{\mr^3} e(\mathrm{Tr}[Q \mathrm{Re}(C^{-t} (- Z^{-1}) C^{-1} - TX]) \phi(\iota^{-1}(\mathrm{Im}(C^{-t} (-Z^{-1}) C^{-1}))  
 dX.
\end{equation*}

To complete the proof, note that
$$
\mathrm{Re}(C^{-t} (- Z^{-1}) C^{-1})
= C^{-t} \mathrm{Re}( - Z^{-1}) C^{-1}
$$
and
$$
\phi(\iota^{-1}(\mathrm{Im}(C^{-t} (-Z^{-1}) C^{-1}))
= \phi(\iota^{-1}( C^{-t}(\mathrm{Im}(-Z^{-1}) C^{-1})
= \phi(C^{-t} \iota^{-1}((\mathrm{Im}(-Z^{-1}))). \qedhere
$$
\end{proof}

\begin{mylemma}
\label{eq:IboundRank2}
Let $\mathcal{I} = \mathcal{I}(Q,T,Y, C,\phi)$ be defined as in \eqref{eq:PoincareSeriesIntegralProp51}, with $\phi$ as in \eqref{eq:phiNdef}.  Then
\begin{equation*}
\mathcal{I} \ll (r_1 r_2)^{3/4} \frac{N^{3/2}}{|\det(C)|^{3/2}}.
\end{equation*}
Moreover, with $Y = R R^t$, $\mathcal{I}(Q,T,Y,C, \phi)$ is supported on matrices $C$ such that
\begin{equation}
\label{eq:CRRCtruncation}
\| C R R^t C^t \|_{\infty} \ll N.
\end{equation}
\end{mylemma}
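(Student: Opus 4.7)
The plan is to reduce the estimate of $\mathcal{I}$ to an $L^1$-style bound via a judicious change of variables, exploiting the explicit form of $-Z^{-1}$ on the Siegel upper half-plane. Setting $M := \mathrm{Im}(-Z^{-1})$, one has the identity $M^{-1} = Y + XY^{-1}X$ (derived from $\mathrm{Im}(-Z^{-1}) = \overline{Z}^{-t}YZ^{-1}$). I will introduce the new variable $M'' := C^{-t} M C^{-1}$; the support of $\phi$ then forces $M''$ into a compact set $\Omega$ of positive definite symmetric matrices with all entries of size $\asymp 1/N$, so that $\|M''\|_\infty \asymp 1/N$ and $\|M''^{-1}\|_\infty \asymp N$.

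For the support claim \eqref{eq:CRRCtruncation}, I will use that
$$M''^{-1} = C(Y+XY^{-1}X)C^t = CYC^t + CXY^{-1}XC^t \geq CYC^t$$
in the positive semidefinite order. It follows that $\lambda_{\max}(CYC^t) \leq \lambda_{\max}(M''^{-1}) = \|M''^{-1}\|_{\mathrm{op}} \asymp N$, so $\|CRR^tC^t\|_\infty = \|CYC^t\|_\infty \ll N$.

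For the quantitative bound, I will apply $|\mathcal{I}| \leq \int |\phi(\iota^{-1}(M''))|\,dX$ and change variables $X \to M''$. Using the parameterization $X = Y^{1/2}\tilde X Y^{1/2}$ for symmetric $\tilde X$, one has $XY^{-1}X = Y^{1/2}\tilde X^2 Y^{1/2}$, so that $\tilde X^2 = V^{-1} - I$ where $V := Y^{1/2} M Y^{1/2}$. Note that $V$ has the same eigenvalues as $(CYC^t)^{1/2}M''(CYC^t)^{1/2}$ (by trace/determinant comparison for $2\times 2$ matrices). Chaining the Jacobians of $X \to \tilde X \to \tilde X^2 \to M^{-1} \to M \to M''$ and using $\det M'' \asymp 1/N^2$ in the support, one obtains
$$|\mathcal{I}| \ll \frac{N^6}{|\det C|^3} \int_\Omega \frac{dM''}{|\det \tilde X|\,|\mathrm{tr}(\tilde X)|}.$$

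The main technical step is then to show $\int_\Omega dM''/(|\det\tilde X|\,|\mathrm{tr}(\tilde X)|) \ll \Delta^{3/2}/N^3$, where $\Delta := |\det C|\sqrt{\det Y}/N$, so that $\det V \asymp \Delta^2$ on the support. Writing $\mu_i^2 = 1/\nu_i - 1$ for eigenvalues $\nu_1 \leq \nu_2 \leq 1$ of $V$, and noting $\nu_1 \leq \sqrt{\det V} \asymp \Delta$, in the bulk regime $\nu_2 \leq 1/2$ one gets $|\det \tilde X|\,|\mathrm{tr}(\tilde X)| = \mu_1\mu_2(\mu_1+\mu_2) \gtrsim \Delta^{-3/2}$ pointwise. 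The delicate boundary regime $\nu_2 \to 1$ (which can occur only when $\|CYC^t\|_{\mathrm{op}} \sim N$) is the main obstacle; it is handled by combining the compensating gain $\nu_1 \asymp \Delta^2$ with the integrability of $1/\sqrt{1-\nu_2}$ across the transverse direction to the surface $\nu_2 = 1$ in $\Omega$. Together with $\mathrm{vol}(\Omega) \asymp N^{-3}$, the final bound is $|\mathcal{I}| \ll (\det Y)^{3/4} N^{3/2}/|\det C|^{3/2} = (r_1 r_2)^{3/4} N^{3/2}/|\det C|^{3/2}$.
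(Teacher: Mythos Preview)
Your support argument for $\|CRR^tC^t\|_\infty \ll N$ is correct and matches the paper's: from $M''^{-1}=C(Y+XY^{-1}X)C^t \ge CYC^t$ in the semidefinite order one reads off the operator-norm (hence sup-norm) bound.

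For the quantitative bound, your change-of-variables route has a genuine gap. The map $\tilde X \mapsto \tilde X^2$ on symmetric $2\times 2$ matrices is generically four-to-one, so ``change variables $X \to M''$'' is really a pushforward summing over all square-root branches. On the two branches where the eigenvalues of $\tilde X$ have opposite signs one has $|\mathrm{tr}(\tilde X)|=|\mu_1-\mu_2|$, not $\mu_1+\mu_2$; this vanishes along the codimension-two locus $\nu_1=\nu_2$, which \emph{does} meet the support (e.g.\ when $CYC^t$ is close to a scalar matrix of size $\asymp N\Delta$, take $M''$ near $c\,(CYC^t)^{-1}\in\Omega$). You analyze only the branch with $|\mathrm{tr}(\tilde X)|=\mu_1+\mu_2$ and flag only $\nu_2\to 1$ as delicate. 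The $\nu_1=\nu_2$ singularity is in fact integrable (type $1/r$ transverse to a curve in a three-dimensional region), but obtaining the \emph{uniform} bound $\ll \Delta^{3/2}/N^3$ through both this locus and the $\nu_2\to 1$ boundary requires a case analysis your sketch does not supply.

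The paper sidesteps all of this by staying in the $X$-variable. After the linear substitution $X=RX'R^t$ (Jacobian $(r_1r_2)^{3/2}$, reducing $Y$ to $I$) and the identity $\mathrm{Im}\bigl(-(X+iI)^{-1}\bigr)=(I+X^2)^{-1}$, the support condition becomes $\|CR(I+X^2)R^tC^t\|_{\mathrm{op}}\ll N$. Writing the singular value decomposition $CR=UDV$ with $D=\mathrm{diag}(\lambda_1,\lambda_2)$ and applying the measure-preserving orthogonal change $X\to V^tXV$, one is left with the elementary volume
\[
\int_{\|DX^2D\|_{\mathrm{op}}\ll N} dX \ \ll\ \frac{N^{3/2}}{|\lambda_1\lambda_2|^{3/2}},
\]
read off directly from the diagonal entries $\lambda_1^2(x_1^2+x_2^2)$ and $\lambda_2^2(x_2^2+x_3^2)$ of $DX^2D$. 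Since $|\lambda_1\lambda_2|=|\det(CR)|=|\det C|\,(r_1r_2)^{1/2}$, this yields the claimed bound with no Jacobian singularities to manage.
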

\begin{proof}
By applying the triangle inequality to $\mathcal{I}$, we have
\begin{equation*}
|\mathcal{I}| \leq 
\int_{\mr^3}
|\phi(C^{-t} \iota^{-1}(\mathrm{Im}(-Z^{-1})))|
dX.
\end{equation*}
As a first step, recall we write $Z = X+iY$ with $Y = R R^t$ as in \eqref{eq:YvsAcoordinates}.  We change variables by writing $X = R X' R^t$, which gives $dX = (r_1 r_2)^{3/2} dX'$ and $Z \rightarrow R (X' + iI) R^t$.  We then rename $X'$ back to $X$.  This gives
\begin{equation}
\label{eq:IboundInProof}
|\mathcal{I}| \leq (r_1 r_2)^{3/2} 
\int_{\mr^3}
|\phi(C^{-t} R^{-t} \iota^{-1}(\mathrm{Im}(-(X + iI)^{-1})))|
dX.
\end{equation}
Note that $\mathrm{Im}(-(X+iI)^{-1}) = (X^2 + I)^{-1}$.  Hence by the support of $\phi$, the integral may be restricted to $X$ such that
\begin{equation*}
C^{-t} R^{-t} (I+ X^2)^{-1} R^{-1} C^{-1} = \begin{pmatrix} 1 & U \\ & 1 \end{pmatrix} \begin{pmatrix} T_1 & \\ & T_2 \end{pmatrix} \begin{pmatrix} 1 & \\ U & 1 \end{pmatrix},
\end{equation*}
with $U \ll 1$ and $T_1 \asymp T_2 \asymp N^{-1}$.  Taking the inverse of both sides, we obtain that
\begin{equation*}
C R (I+ X^2) R^t C^t = \begin{pmatrix} 1 & 0 \\ -U & 1 \end{pmatrix} \begin{pmatrix} 1/T_1 & \\ & 1/T_2 \end{pmatrix} \begin{pmatrix} 1 &-U \\  & 1 \end{pmatrix}
= \begin{pmatrix} 1/T_1 & -U/T_1 \\ -U/T_1 & T_2^{-1} + T_1^{-1} U^2 \end{pmatrix},
\end{equation*}
which has diagonal entries of size $\asymp N$ and off-diagonal entries of size $\ll N$.  Hence
\begin{equation*}
\| C R (I+X^2) R^t C^t \|_{\mathrm{op}} \ll N,
\end{equation*}
where $\| M \|_{\mathrm{op}}$ denotes the operator norm (the maximal eigenvalue of $M$).  For symmetric positive definite matrices, the condition $M_1 \leq M_2$ implies that $\| M_1 \|_{\mathrm{op}} \leq \| M_2 \|_{\mathrm{op}}$.  
% \blue{Quick proof: According to Wikipedia, the Loewner order is a partial ordering on the convex cone of positive semi-definite matrices, which is precisely what we are using when saying $M_1 \leq M_2$.  Then Wikipedia just says that the Frobenius norm and the spectral norm are examples of monotone norms (those satisfying $M_1 \leq M_2 \Longrightarrow \|M_1 \| \leq \| M_2 \|$.  Maybe an easy way to see this directly is that we can write $M_1 = P_1^t P_1$ and then $x^t M_1 x = (P_1 x)^t (P_1 x) = |P_1 x|^2$.  Then $|P_1 x|^2 \leq |P_2 x|^2$ for all $x$.  On the other hand, according to Wikipedia, $\|M_1 \| = \max_{|x| = |y| = 1} |x^t M_1 y| = \max_{|x| = |y| = 1} |(P_1 x)^t \cdot (P_1 y)| 
% \leq \max_{|x| = |y| = 1} |P_1 x| \cdot |P_1 y|
% $ (by Cauchy, and with equality iff $P_1 x$ and $P_1 y$ are parallel.)  This equals $\max_x |P_1 x|^2$.
% Hence $\|M_1 \| \leq \max_{|x| = 1} |P_1 x|^2 \leq \max_{|x| = 1} |P_2 x|^2$.  When $v$ is a unit eigenvector of $P_2$ of maximal eigenvalue, then $\max_{|x| = 1} |P_2 x|^2 = |P_2 v|^2 = \|M_2 \|$.  This is the proof for the operator norm.
% }
Hence $\mathcal{I}$ vanishes unless $\| C R R^t C^t \|_{\mathrm{op}} \ll N$.  Since any two norms on a finite dimensional vector space are comparable, we obtain the truncation \eqref{eq:CRRCtruncation} for the sup norm.

Imposing the condition \eqref{eq:CRRCtruncation}, we then bound the integral \eqref{eq:IboundInProof} by
\begin{equation*}
|\mathcal{I}| 
\leq
(r_1 r_2)^{3/2}
\int_{\| C R X^2 R^t C^t \|_{\mathrm{op}} \ll N} 1 dX.
\end{equation*}
Now write the singular value decomposition of $CR$ by $CR = U D V$ with $U, V$ orthogonal and $D$ diagonal, say $D = (\begin{smallmatrix} \lambda_1 & \\ & \lambda_2 \end{smallmatrix})$.  Note that $\| U M U^t \|_{\mathrm{op}} = \|M  \|_{\mathrm{op}}$ for any orthogonal $U$, so 
\begin{equation*}
|\mathcal{I}| 
\leq
(r_1 r_2)^{3/2}
\int_{\| D V X^2 V^t D \|_{\mathrm{op}} \ll N} 1 dX.
\end{equation*}
Next we change variables $X \rightarrow V^t X V$, which implies $X^2 \rightarrow V^t X^2 V$.  The Lebesgue measure $dX$ is invariant under such orthogonal transformations.  Hence
\begin{equation*}
|\mathcal{I}| 
\leq
(r_1 r_2)^{3/2}
\int_{\| D  X^2  D \|_{\mathrm{op}} \ll N} 1 dX
\ll (r_1 r_2)^{3/2} \int_{\substack{\lambda_1^2 (x_1^2 + x_2^2) \ll N \\ \lambda_2^2 (x_2^2 + x_3^2) \ll N}} 1 dX
\ll 
(r_1 r_2)^{3/2}
\frac{N^{3/2}}{|\lambda_1 \lambda_2|^{3/2}},
\end{equation*}
where we used that $|x_2| \ll N^{1/2} \min(|\lambda_1|^{-1}, |\lambda_2|^{-1}) \ll N^{1/2} (|\lambda_1 \lambda_2|)^{-1/2}$.
Note that $|\lambda_1 \lambda_2| = |\det(CR)|$ and $(r_1 r_2)^{1/2} = \det(R)$, so the bound simplifies as claimed.
\end{proof}

\begin{mylemma}
\label{lemma:CsupportCondition}
With $C = (\begin{smallmatrix} c_1 & c_2 \\ c_3 & c_4 \end{smallmatrix})$,
the condition that $\| C R R^t C^t \|_{\infty} \ll N$ implies
\begin{gather*}
c_1^2 r_1 \ll N, 
\qquad (c_1 u + c_2)^2 r_2 \ll N, \\
c_3^2 r_1 \ll N, 
\qquad (c_3 u + c_4)^2 r_2 \ll N.
\end{gather*}
\end{mylemma}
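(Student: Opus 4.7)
The plan is a direct computation. Since $R \in \mathrm{GL}_2^{+}(\mr)/\mathrm{SO}_2(\mr)$ only enters through $Y = RR^t$, we may take the Iwasawa representative
\[
R = \begin{pmatrix} 1 & u \\ & 1 \end{pmatrix}\begin{pmatrix} \sqrt{r_1} & \\ & \sqrt{r_2} \end{pmatrix} = \begin{pmatrix} \sqrt{r_1} & u\sqrt{r_2} \\ & \sqrt{r_2} \end{pmatrix}.
\]
Multiplying $C$ on the right by $R$ yields the explicit matrix
\[
CR = \begin{pmatrix} c_1 \sqrt{r_1} & (c_1 u + c_2)\sqrt{r_2} \\ c_3 \sqrt{r_1} & (c_3 u + c_4)\sqrt{r_2} \end{pmatrix}.
\]

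Next I would compute $CRR^tC^t = (CR)(CR)^t$ and simply read off its diagonal entries:
\[
(CRR^tC^t)_{1,1} = c_1^2 r_1 + (c_1 u + c_2)^2 r_2, \qquad (CRR^tC^t)_{2,2} = c_3^2 r_1 + (c_3 u + c_4)^2 r_2.
\]
Both diagonal entries are sums of two nonnegative terms, and each is bounded in absolute value by $\|CRR^tC^t\|_{\infty} \ll N$. Since each nonnegative summand is bounded above by its sum, the four claimed inequalities follow immediately.

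There is no real obstacle here; the lemma is a direct consequence of the fact that the sup norm dominates each diagonal entry of a Gram matrix, and that a Gram matrix has nonnegative diagonal decomposition into squares. The only minor point to be careful about is the freedom in choosing $R$ modulo $\mathrm{SO}_2(\mr)$, which is harmless because $RR^t$ is invariant under that ambiguity, so the Iwasawa representative above may be used without loss of generality.
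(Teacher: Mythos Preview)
Your proof is correct and essentially identical to the paper's own argument: both compute the diagonal entries of $CRR^tC^t$ explicitly (you via $(CR)(CR)^t$, the paper directly from $Y=RR^t$ in the Iwasawa coordinates \eqref{eq:YvsAcoordinates}) and then observe that each diagonal entry is a sum of two nonnegative terms bounded by the sup norm.
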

\begin{proof}
Using \eqref{eq:YvsAcoordinates}, note $C R R^t C^t$ equals
\begin{equation}
\label{eq:CRRCformula}
\begin{pmatrix} 
c_1^2r_1+(c_1u+c_2)^2r_2 & c_1c_3(r_1+u^2r_2)+(c_1c_4+c_2c_3)ur_2+c_2c_4r_2\\ 
*
%c_1c_3(r_1+u^2r_2)+(c_1c_4+c_2c_3)ur_2+c_2c_4r_2
& c_3^2r_1+(c_3u+c_4)^2r_2
\end{pmatrix}.
\end{equation}
Hence the upper-left and lower-right entries give the stated constraints.
% Also note that
% \begin{equation}
% c_1^2 (r_1 + u^2 r_2) + 2c_1 c_2 u r_2 + c_2^2 r_2
% = c_1^2 r_1 + (c_1 u + c_2)^2 r_2. 
% \end{equation}
%From the upper-left entry, we can restrict $c_1$ and $c_2$ as claimed.  The lower-right entry gives the restrictions on $c_3$ and $c_4$, by symmetry.
\end{proof}

\begin{mytheo}
\label{thm.Rank2Bound}
Let $\phi$ be as in \eqref{eq:phiNdef}. Suppose $Y$ is given by 
\eqref{eq:YvsAcoordinates}
with $r_1,r_2 > 0$ 
and $u \ll1$. Then
\begin{equation}
\label{eq:aQ2bound}
|a_Q^{(2)}(T,Y, \phi)| \ll (r_1 r_2)^{3/4} (r_1^{-1} + r_2^{-1})^{1+\varepsilon} N^{5/2+\varepsilon} (1+|\det(T)|)^{\varepsilon}.
\end{equation}
\end{mytheo}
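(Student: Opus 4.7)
The plan is to combine the Kloosterman-sum expansion from Proposition \ref{prop:rank2SumOfKloostermanSums}, the integral bound of Lemma \ref{eq:IboundRank2}, Kitaoka's estimate \eqref{eq:KitaokaSimplified}, and the counting result in Theorem \ref{thm.Bounding(t,C)}. Starting from
\begin{equation*}
|a_Q^{(2)}(T,Y,\phi)| \leq \sum_{\det(C) \neq 0} |K(Q,T;C)| \cdot |\mathcal{I}(Q,T,Y,C,\phi)|,
\end{equation*}
I would first insert the bound $|\mathcal{I}| \ll (r_1 r_2)^{3/4} N^{3/2}/|\det(C)|^{3/2}$ from Lemma \ref{eq:IboundRank2}, together with its support restriction $\|CRR^tC^t\|_\infty \ll N$. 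By Lemma \ref{lemma:CsupportCondition} and the hypothesis $u \ll 1$, this restriction truncates the $C$-sum to $\|C\|_\infty \ll X$, where $X := \sqrt{N(r_1^{-1}+r_2^{-1})}$.

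Next I would factor out the gcd of the entries, writing $C = \alpha_1 C'$ with $\gcd(c_1', c_2', c_3', c_4') = 1$ and setting $\beta := |\det(C')|$, so that $|\det(C)| = \alpha_1^2 \beta$. Kitaoka's bound in the simplified form \eqref{eq:KitaokaSimplified} yields
\begin{equation*}
\frac{|K(Q,T;C)|}{|\det(C)|^{3/2}} \ll \alpha_1^\varepsilon \beta^{-1+\varepsilon} (\beta, t(C',T))^{1/2}.
\end{equation*}
A dyadic decomposition $\beta \asymp W'$ with $W' \leq (X/\alpha_1)^2$, followed by Theorem \ref{thm.Bounding(t,C)} applied with $X \mapsto X/\alpha_1$ and $W \mapsto W'$, gives
\begin{equation*}
\sum_{\substack{C' : \gcd = 1 \\ \|C'\|_\infty \ll X/\alpha_1 \\ |\det(C')| \asymp W'}} (\beta, t(C',T))^{1/2} \ll \|T\|^\varepsilon W' (X/\alpha_1)^{2+\varepsilon}.
\end{equation*}
The weight $W'^{-1+\varepsilon}$ cancels the $W'$ from the count, so each dyadic block contributes $\ll \|T\|^\varepsilon (X/\alpha_1)^{2+\varepsilon}$; summing dyadic over $W'$ and then over $\alpha_1 \leq X$ yields $\|T\|^\varepsilon X^{2+\varepsilon}$.

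Assembling the pieces produces $|a_Q^{(2)}(T,Y,\phi)| \ll (r_1 r_2)^{3/4} N^{3/2} \|T\|^\varepsilon X^{2+\varepsilon}$, and substituting $X^{2+\varepsilon} = N^{1+\varepsilon}(r_1^{-1}+r_2^{-1})^{1+\varepsilon}$ gives precisely $(r_1 r_2)^{3/4} N^{5/2+\varepsilon}(r_1^{-1}+r_2^{-1})^{1+\varepsilon}$ up to a controlled $T$-dependent factor, matching the claimed bound. The main obstacle in executing this plan is not in any single algebraic step but rather in how gracefully the $(\beta, t(C',T))^{1/2}$ factor coming from Kitaoka's bound is absorbed when summed over $C'$; this is exactly what Theorem \ref{thm.Bounding(t,C)} has been engineered to accomplish, by exploiting Corollary \ref{coro:tformula} to realize $t(C',T) \bmod \beta$ as values of a quadratic form, and then controlling the fiber sizes via Lemma \ref{lem.quadraticformDivisor}. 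Without this sharp counting input, one would recover only the trivial bound of size $(r_1 r_2)^{3/4} N^{3/2} X^{4+\varepsilon}$, losing two powers of $X$ and destroying the power saving.
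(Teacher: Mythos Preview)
Your proposal is correct and follows essentially the same route as the paper: apply Proposition~\ref{prop:rank2SumOfKloostermanSums} and Lemma~\ref{eq:IboundRank2}, truncate via Lemma~\ref{lemma:CsupportCondition}, extract the gcd $\alpha_1$, insert Kitaoka's bound \eqref{eq:KitaokaSimplified}, dyadically decompose in $\beta$, and invoke Theorem~\ref{thm.Bounding(t,C)}. One small omission: Theorem~\ref{thm.Bounding(t,C)} requires $T \neq 0$, so the case $T = 0$ must be handled separately---the paper does this by appealing to the symmetry $K(Q,T;C) = K(T,Q;C^t)$ from \eqref{eq:KloostermanSymmetry} to swap the roles of $Q$ and $T$ (recall $Q \neq 0$ by hypothesis), after which the same argument goes through with $\|Q\|^\varepsilon$ in place of $\|T\|^\varepsilon$.
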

\begin{proof}
By Proposition \ref{prop:rank2SumOfKloostermanSums} and Lemma \ref{eq:IboundRank2} we have
\begin{equation*}
|a_Q^{(2)}(T,Y, \phi)|
\ll (r_1 r_2)^{3/4} N^{3/2}
\sum_{\substack{\det(C) \neq 0 \\ \| C R R^t C^t\|_{\infty} \ll N}} |\det(C)|^{-3/2} |K(Q,T;C)|.
\end{equation*}
We assume $T \neq 0$, and discuss the changes needed to handle $T=0$ at the end of the proof.
Write $C = (\begin{smallmatrix} c_1 & c_2 \\ c_3 & c_4 \end{smallmatrix})$.  Let $\alpha = \gcd(c_1, c_2, c_3, c_4)$, and replace $c_i$ by $c_i/\alpha$.
By \eqref{eq:KitaokaSimplified}, we have the bound
\begin{equation*}
|a_Q^{(2)}(T,Y, \phi)|
\ll (r_1 r_2)^{3/4} N^{3/2}
\sum_{\alpha, \beta \geq 1} \alpha^{\varepsilon}
\sum_{\substack{ \| C R R^t C^t\| \ll N/\alpha^2 \\ UCV = (\begin{smallmatrix} 1 & \\ & \beta \end{smallmatrix})}} \frac{(\beta, t)^{1/2}}{\beta},
\end{equation*}
where recall $t$ is the lower-right entry of $V^t T V$.

From Lemma \ref{lemma:CsupportCondition}, we deduce from $r_1,r_2 > 0$ and $u \ll 1 $ that 
$\|C\|_{\infty} \ll (N/\alpha^2)^{1/2} (r_1^{-1/2} + r_2^{-1/2})$.
% \begin{align}
%     \|CRR^tC^t\|\ll N/\alpha^2 \quad \Longrightarrow \quad \|C\|_{\infty} \ll N^{1/2}/\alpha^2.
% \end{align}
As a result, we have 
\begin{align*}
    |a_Q^{(2)}(T,Y, \phi)|
\ll (r_1 r_2)^{3/4} N^{3/2}
\sum_{\alpha,\beta\geq1}\frac{\alpha^{\varepsilon}}{\beta}
\sum_{\substack{(c_1, c_2, c_3, c_4) = 1 \\ |c_i| \ll  \frac{N^{1/2}}{\alpha} (r_1^{-1/2} + r_2^{-1/2}) \\ \det(C) = \beta }} (\beta, t)^{1/2}.
\end{align*}
Recalling the definition of $\mathcal{N}(X,W,T)$ from \eqref{eq:Nbound}, we have
\begin{align*} 
|a_Q^{(2)}(T,Y, \phi)|
\ll (r_1 r_2)^{3/4} N^{3/2}
\sum_{\alpha \geq 1}
\sum_{\substack{1 \ll Z \ll \frac{N}{\alpha^2} (\frac{1}{r_1} + \frac{1}{r_2})  \\ \text{dyadic}}} \frac{\alpha^{\varepsilon}}{Z}
    \mathcal{N}\Big(\frac{\sqrt{N}}{\alpha} \big(\frac{1}{\sqrt{r_1}} + \frac{1}{\sqrt{r_2}}\big), Z, T\Big).
\end{align*}
Theorem \ref{thm.Bounding(t,C)} implies
\begin{equation*}
    |a_Q^{(2)}(T, Y, \phi)| \ll (r_1 r_2)^{3/4} N^{3/2+\varepsilon} 
    \sum_{\alpha \geq 1} 
   \sum_{\substack{1 \ll Z \ll \frac{N}{\alpha^2} (\frac{1}{r_1} + \frac{1}{r_2})  \\ \text{dyadic}}}
    \frac{\alpha^{\varepsilon}}{Z} \frac{N}{\alpha^2} \Big(\frac{1}{r_1} + \frac{1}{r_2}\Big)^{1+\varepsilon} Z |\det(T)|^{\varepsilon}, 
\end{equation*}
which simplifies to give \eqref{eq:aQ2bound}.

In case $T=0$, we 
use the symmetry \eqref{eq:KloostermanSymmetry} to switch the roles of $Q$ and $T$ (recall $Q \neq 0$ by assumption).  The rest of the proof carries through with only minor changes.
\end{proof}

\subsection{Rank \texorpdfstring{$1$}{1}}
We begin with the initial computations of Kitaoka.
In this section, we will use the notation $\sumdag_{U,V}$ to mean that 
 $U$ and $V$ run over any set of coset representatives of the form
 \begin{align*}
    U=(\begin{smallmatrix}
        * & * \\ u_3 & u_4
    \end{smallmatrix})/\{\pm1\} \in \Gamma_{\infty}^{+} \backslash \mathrm{GL}_2(\mathbb{Z})/\{ \pm 1\} \quad \text{ and } \quad V=(\begin{smallmatrix}
        v_1 & * \\ v_3 & *
    \end{smallmatrix})\in \mathrm{GL}_2(\mathbb{Z})/ \Gamma_{\infty}^{+},
\end{align*}
where $\Gamma_{\infty}^{+} = \{ (\begin{smallmatrix} 1 & n \\ 0 & 1 \end{smallmatrix}) : n \in \mz \}$.
\begin{myprop}
\label{prop.Rank1Poincare}
    Write $UQU^t=(\begin{smallmatrix}
    f_1 & f_2/2 \\ f_2/2 & f_3
\end{smallmatrix})$ and $V^{-1}TV^{-t}=(\begin{smallmatrix}
    g_1 & g_2/2 \\ g_2/2 & g_3
\end{smallmatrix})$. Moreover,  write $(\begin{smallmatrix}
    z_1' & z_2'\\ z_2' & iy_3'
\end{smallmatrix})=(\begin{smallmatrix}
    x_1 & x_2 \\ x_2 & 0
\end{smallmatrix})+iV^tYV$. Then we have that 
\begin{align*}
        a_Q^{(1)}(T,Y,\phi)=\sum_{\pm}\sum_{c\geq1}\sumdag_{U,V}\delta(f_3=g_3)
        K_1(Q, T, c, U, V) \mathcal{I}_1(Q,T,Y,c,U,V,\phi),
    \end{align*}
    where \begin{align*}
    K_1(Q,T,c,U,V)=\sumstar_{d_1\shortmod{c}}\sum_{d_2\shortmod{c}}e_c(d_1g_1+d_2g_2+\overline{d_1}(f_1\mp d_2f_2+d_2^2f_3))
\end{align*}
and $\mathcal{I}_1= \mathcal{I}_1(Q,T, Y, c,U,V, \phi)$ with 
\begin{multline}
    \label{I1Def0}
    \mathcal{I}_1 = 
    \int_{\mr^2}  e(- g_1 x_1 - g_2 x_2 )
     e\Big(\mathrm{Tr}
     U Q U^t \mathrm{Re}\begin{pmatrix}  - \frac{1}{c^2 z_1'} & \frac{\pm z_2'}{cz_1'} \\ \frac{\pm z_2'}{cz_1'} & \frac{-(z_2')^2}{z_1'} \end{pmatrix}  \Big)
     \\
     \times
     \phi \circ \iota^{-1} \Big(U^t \mathrm{Im} \begin{pmatrix}  - \frac{1}{c^2 z_1'} & \frac{\pm z_2}{cz_1'} \\ \frac{\pm z_2'}{cz_1'} & \frac{-(z_2')^2}{z_1'} + i y_3'\end{pmatrix}  U \Big) dx_1 dx_2.
\end{multline}
% \begin{align}
% \label{I1Def0}
%     \mathcal{I}_1=&\ \int_{\substack{x_1,x_2\in\mr}} e\bigg( - \mathrm{Re}\left(\frac{f_1}{c^2z_1'}\mp\frac{f_2z_2'}{cz_1'}\right)-g_1x_1-g_2x_2\bigg)\nonumber\\
%     &\  \phi\left(\iota^{-1}\left(\mathrm{Im} \ U^t\left(\begin{smallmatrix}
%         -c^{-2}z_1'^{-1} & c^{-1}z_2'/z_1' \\ c^{-1}z_2'/z_1' & -z_2'^2/z_1'+z_3'
%     \end{smallmatrix}\right) U\right) \right)dx_1dx_2.
% \end{align}
% if $f_3=0$, \begin{align}\label{I1Def}
%     \mathcal{I}_1=&\ \int_{\substack{x_1,x_2\in\mr}} e\bigg( -\mathrm{Re}\left(\frac{f_3}{z_1'}\left(z_2'+\frac{g_2z_1'\mp f_2/c}{2f_3}\right)^2+\frac{\det(Q)}{c^2f_3z_1'}\right)-\frac{\det(T)}{f_3}x_1\bigg)\nonumber\\
%     &\  \phi\left(\iota^{-1}\left(\mathrm{Im} \ U^t\left(\begin{smallmatrix}
%         -c^{-2}z_1'^{-1} & c^{-1}z_2'/z_1' \\ c^{-1}z_2'/z_1' & -z_2'^2/z_1'+z_3'
%     \end{smallmatrix}\right) U\right) \right)dx_1dx_2
% \end{align}
% if $f_3\neq0$.
\end{myprop}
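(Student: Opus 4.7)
The plan is to follow Kitaoka's argument in \cite{Kitaoka}, adapting his rank-1 computation to accommodate our general test function $\phi$ in place of the holomorphic weight factor.

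Starting from \eqref{eq:aQformulaWithS} restricted to $C$ of rank $1$, the first step is to put such a $C$ into the canonical form $C = U^{-1}\bigl(\begin{smallmatrix} c & 0 \\ 0 & 0 \end{smallmatrix}\bigr)V^{-1}$ with $c \geq 1$. The residual sign ambiguities in the choice of $(U,V)$ are precisely what the outer $\sum_{\pm}$ and the quotients by $\{\pm 1\}$ implicit in $\sumdag_{U,V}$ account for. With such a representative fixed, Kitaoka's analysis of the symplectic condition parameterizes $D \pmod{C\Lambda'}$ by a unit $d_1 \in (\mz/c)^{*}$ and a residue class $d_2 \in \mz/c$, up to further shifts which can be absorbed into the translation variable.

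Next I would change variables via $X' = V^t X V$; since $|\det V| = 1$ this is measure preserving, and converts $\mathrm{Tr}(TX)$ into $\mathrm{Tr}(V^{-1}TV^{-t} X') = g_1 x_1 + g_2 x_2 + g_3 x_3'$, which explains how the $g_i$ enter the exponential. The analogous role of $U$ appears in the phase coming from $e(\mathrm{Tr}(Q\,\mathrm{Re}(\gamma Z)))$, producing the $f_i$. A direct computation of $\gamma(Z) = (AZ+B)(CZ+D)^{-1}$ with $C$ in diagonal rank-1 form shows that, relative to the variables $z_1' = x_1 + i(V^tYV)_{11}$, $z_2' = x_2 + i(V^tYV)_{12}$, and $iy_3' = i(V^tYV)_{22}$, both $\mathrm{Re}(\gamma Z)$ and $\mathrm{Im}(\gamma Z)$ take the explicit shape displayed in \eqref{I1Def0}; the $\pm$ there tracks the residual sign in the canonical form for $C$.

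The next step exploits Kitaoka's description of $\theta(\gamma)$ for rank-1 $\gamma$: in our coordinates it is the rank-$1$ sublattice of $\Lambda'$ generated by $\bigl(\begin{smallmatrix} 0 & 0 \\ 0 & 1 \end{smallmatrix}\bigr)$, so $\sum_{S \in \Lambda'/\theta(\gamma)}\int_{(\mr/\mz)^3 + S}$ unfolds $(x_1, x_2)$ to $\mr^2$ while leaving $x_3'$ on $\mr/\mz$. The integrand depends on $x_3'$ only through the phase $e((f_3 - g_3)x_3')$, so integrating over $x_3'$ produces the Kronecker delta $\delta(f_3 = g_3)$. The sum over $D \pmod{C\Lambda'}$ then decouples into independent sums over $(d_1, d_2)$, giving $K_1(Q, T, c, U, V)$ in the stated form, while the remaining two-dimensional integral over $(x_1, x_2)$ is exactly $\mathcal{I}_1$.

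The main technical obstacle is the explicit computation of $\gamma(Z)$ for the chosen canonical $\gamma$, together with the bookkeeping of how $c$, $d_1$, $d_2$, and $\pm$ appear in each entry of $\mathrm{Re}(\gamma Z)$ and $\mathrm{Im}(\gamma Z)$ after conjugation by $U$. The most delicate piece is verifying that the upper-left block of $U\,\mathrm{Re}(\gamma Z)\,U^t$, when paired with the phase from $Q$, yields the quadratic-in-$d_2$ expression $\overline{d_1}(f_1 \mp d_2 f_2 + d_2^2 f_3)/c$ inside $K_1$, and that the same sign convention carries through the matrix entries inside $\phi$ so as to match the $\pm$ appearing in $\mathcal{I}_1$.
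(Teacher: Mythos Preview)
Your plan is correct and follows the same route as the paper: Kitaoka's rank-$1$ parametrization, the change of variables $X'=V^tXV$, unfolding $(x_1,x_2)$ to $\mr^2$ via the description of $\theta(\gamma)$, and integrating out $x_3'$ to produce $\delta(f_3=g_3)$.

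Two small points worth flagging. First, the outer $\sum_{\pm}$ is \emph{not} a residual sign ambiguity in the choice of $(U,V)$; it is the genuine parameter $d_3=\pm1$ in Kitaoka's description $D=U^{-1}\bigl(\begin{smallmatrix} d_1 & d_2 \\ 0 & \pm1\end{smallmatrix}\bigr)V^{-1}$ of the coset representatives, and tracking this sign correctly is exactly what produces the $\mp d_2 f_2$ term in $K_1$ and the $\pm z_2'$ entries in $\mathcal{I}_1$. Second, the paper streamlines the ``delicate'' matrix computation you anticipate by first replacing $U$ with $W=\bigl(\begin{smallmatrix}1 & \mp d_2 \\ 0 & \pm1\end{smallmatrix}\bigr)U$, which makes $D$ diagonal and lets one write $\gamma=\bigl(\begin{smallmatrix}W^t & \\ & W^{-1}\end{smallmatrix}\bigr)M_0\bigl(\begin{smallmatrix}V^t & \\ & V^{-1}\end{smallmatrix}\bigr)$ for a very simple $M_0$; after computing $M_0(Z')$ explicitly and shifting $x_1\to x_1-d_1/c$, $x_2\to x_2-d_2/c$, one substitutes $W$ back in terms of $U$. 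This avoids carrying $d_2$ through the fractional-linear inversion and makes the quadratic-in-$d_2$ phase fall out cleanly at the end rather than at the start.
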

\begin{proof}
    We continue with \eqref{eq:aQformulaWithS}.
    For $C$ of rank $1$, Kitaoka's Lemma 4 (in \S 2) gives 
    \begin{align*}
    a_Q^{(1)}(T,Y,\phi)=&\ \sum_{d_3 = \pm 1}\sum_{c\geq1}\sumstar_{d_1\shortmod{c}}\sum_{d_2\shortmod{c}}\sumdag_{U,V}\nonumber\\
    \sum_{S\in\Lambda'/\theta(M)}
    &\ \int_{(\mz \backslash \mr)^3+S }e(\mathrm{Tr}(Q\ \mathrm{Re}(MZ)-TX)) \phi(\iota^{-1}\ \mathrm{Im} (MZ) )dX,
\end{align*}
where $M=(\begin{smallmatrix}
    * & *\\C&D
\end{smallmatrix})$ is given by $C=U^{-1}(\begin{smallmatrix}
    c &0\\0&0
\end{smallmatrix})V^t$ and $D=U^{-1}(\begin{smallmatrix}
    d_1&d_2\\0& \pm 1
\end{smallmatrix})V^{-1}$, $\theta(M)=\{S\in\Lambda|V^tSV=(\begin{smallmatrix}
    0&0\\0&*
\end{smallmatrix})\}$, and the sum over $U,V$ is as described before the proposition.

For a given $d_2$ and $U$, let
$W=(\begin{smallmatrix}
    1 & \mp d_2\\ 0 & \pm1 
\end{smallmatrix})U$. Then $C=W^{-1}(\begin{smallmatrix}
    c&0\\0&0
\end{smallmatrix})V^t$ and $D=W^{-1}(\begin{smallmatrix}
    d_1 & 0 \\ 0 & 1
\end{smallmatrix})V^{-1}$. 
Since
$M\in \Gamma_\infty\backslash\Gamma/\Gamma_\infty$ is parametrized by $C$ and $D\ \ \shortmod{C\Lambda'}$, we can take 
\begin{align}
\label{eq:M0def}
    M=\left(\begin{matrix}
        W^t & \\ & W^{-1}
    \end{matrix}\right)M_0 \left(\begin{matrix}
        V^t & \\ & V^{-1}
    \end{matrix}\right), \quad \text{ where } \quad M_0=\left(\begin{matrix}
        a & 0 & b & 0 \\ 0 & 1 & 0 & 0 \\ c & 0 & d_1 & 0 \\ 0 & 0 & 0 & 1
    \end{matrix}\right)
\end{align}
with $a$ and $b$ satisfying $ad_1-bc=1$. 
As a result, we have 
$$
M(Z) =\left(
\begin{pmatrix}
        W^t & \\ & W^{-1}
    \end{pmatrix} \cdot
    M_0\right) (V^tZV  )
    =W^t \cdot (M_0 ( V^tZV ))
    \cdot
    W.
    $$
Hence  
\begin{multline*}
    a_Q^{(1)}(T,Y,\phi)=\sum_{\pm}\sum_{c\geq1}\sumstar_{d_1\shortmod{c}}\sum_{d_2\shortmod{c}}\sumdag_{U,V}
    \sum_{S\in\Lambda' / \theta(M)}
    \int_{(\mz \backslash \mr)^3 + S} 
    \\
     e(\mathrm{Tr}(QW^t \mathrm{Re} (M_0 (V^tZV)) W-TX))
     \phi(\iota^{-1} (W^t \mathrm{Im} (M_0(V^tZV)) W)dX.
\end{multline*}

Write $Z'=V^tZV=X'+iV^tYV$, and change variables $X' = V^t X V$, where $dX' = dX$, so
\begin{multline*}
    a_Q^{(1)}(T,Y,\phi)=\sum_{\pm}\sum_{c\geq1}\sumstar_{d_1\shortmod{c}}\sum_{d_2\shortmod{c}}\sumdag_{U,V}
    \sum_{S\in\Lambda' / \theta(M)}
    \int_{(\mz \backslash \mr)^3 + V^t S V} 
    \\
     e(\mathrm{Tr}(QW^t \mathrm{Re} (M_0 (Z')) W- T V^{-t} X' V^{-1}))
     \phi(\iota^{-1} (W^t \mathrm{Im} (M_0(Z')) W)dX'.
\end{multline*}
Since $\theta(M)=\{S\in\Lambda'|V^tSV=(\begin{smallmatrix}
    0&0\\0&*
\end{smallmatrix})\}$, 
then $\sum_{S \in \Lambda'/\theta(M)} \int_{(\mz \backslash \mr)^3 + V^t SV}$ 
becomes an integral over
$X' = (\begin{smallmatrix} x_1' & x_2' \\ x_2' & x_3' \end{smallmatrix})$ with $x_1', x_2' \in \mr$ and $x_3' \in \mr/\mz$.  Using the cyclic property of the trace, we then deduce
\begin{multline*}
    a_Q^{(1)}(T,Y,\phi)=\sum_{\pm}\sum_{c\geq1}\sumstar_{d_1\shortmod{c}}\sum_{d_2\shortmod{c}}\sumdag_{U,V}
    \int_{\substack{x_1', x_2' \in \mr \\ x_3' \in \mr/\mz}}  
    \\
     e(\mathrm{Tr}(WQW^t \mathrm{Re} (M_0 (Z'))  - V^{-1}T V^{-t} X' ))
     \phi(\iota^{-1} (W^t \mathrm{Im} (M_0(Z')) W)dX'.
\end{multline*}

Writing $Z'=(\begin{smallmatrix}
    z_1' &z_2'\\ z_2'&z_3'
\end{smallmatrix})$, a direct computation 
with \eqref{eq:M0def}
shows that 
\begin{align*}
    M_0(Z')=&\ \left(\begin{matrix}
        (az_1'+b)(cz_1'+d_1)^{-1} & z_2'(cz_1'+d_1)^{-1} \\ z_2'(cz_1'+d_1)^{-1} & -cz_2'^2(cz_1'+d_1)^{-1}+z_3'
    \end{matrix}\right)\nonumber\\
    =&\ 
    \left(\begin{matrix}
        a/c-c^{-1}(cz_1'+d_1)^{-1} & z_2'(cz_1'+d_1)^{-1} \\ z_2'(cz_1'+d_1)^{-1} & -cz_2'^2(cz_1'+d_1)^{-1}+z_3'
    \end{matrix}\right).
\end{align*}
Recalling $UQU^t=(\begin{smallmatrix}
    f_1 & f_2/2 \\ f_2/2 & f_3
\end{smallmatrix})$,
% and $V^{-1}TV^{-t}=(\begin{smallmatrix}
%     g_1 & g_2/2 \\ g_2/2 & g_3
% \end{smallmatrix})$, 
we have 
$$WQ W^t = (\begin{smallmatrix} f_1 \mp d_2 f_2 + d_2^2 f_3 & \pm f_2/2 - d_2 f_3 \\ \pm f_2/2 - d_2 f_3 & f_3 \end{smallmatrix}).$$
% \begin{align}\label{WSWt}
%     WQW^t=\left(\begin{matrix}
%         f_1\mp d_2f_2+d_2^2f_3 & \pm f_2/2-d_2f_3 \\ \pm f_2/2-d_2f_3 & f_3
%     \end{matrix}\right).
% \end{align}
Note that $\mathrm{Tr}(V^{-1} T V^{-t} X') = g_1 x_1' + g_2 x_2' + g_3 x_3'$, and also observe that $\mathrm{Im}(M_0(Z'))$ is independent of $x_3'$.  We can then simplify the formula for $a_Q^{(1)}(T, Y, \phi)$ by integrating over $x_3'$, which detects $f_3 = g_3$.  In addition, we change variables $x_1' \rightarrow x_1' - \frac{d_1}{c}$, and rename $x_1'$ and $x_2'$ back to $x_1$ and $x_2$.  In all, we obtain
\begin{multline}
\label{eq:aQ1formulaMiddle}
    a_Q^{(1)}(T,Y,\phi)=\sum_{\pm}\sum_{c\geq1}\sumstar_{d_1\shortmod{c}}\sum_{d_2\shortmod{c}}\sumdag_{U,V} \delta_{f_3 = g_3}
    \int_{\mr^2}  e(- g_1 (x_1 - \frac{d_1}{c}) - g_2 x_2 )
    \\
     e\Big(\mathrm{Tr}
     Q W^t \mathrm{Re}\begin{pmatrix} \frac{a}{c} - \frac{1}{c^2 z_1'} & \frac{z_2'}{cz_1'} \\ \frac{z_2'}{cz_1'} & \frac{-(z_2')^2}{z_1'} \end{pmatrix} W \Big)
     \phi \circ \iota^{-1} \Big(W^t \mathrm{Im} \begin{pmatrix}  - \frac{1}{c^2 z_1'} & \frac{z_2'}{cz_1'} \\ \frac{z_2'}{cz_1'} & \frac{-(z_2'
     )^2}{z_1'} + i y_3'\end{pmatrix}  W \Big) dx_1 dx_2.
\end{multline}

Note that $\mathrm{Tr}(Q W^t (\begin{smallmatrix} a/c & 0 \\ 0 & 0 \end{smallmatrix})W) 
= \mathrm{Tr}(W Q W^t (\begin{smallmatrix} a/c & 0 \\ 0 & 0 \end{smallmatrix})) = \frac{a}{c}(f_1 \mp d_2 f_2 + d_2^2 f_3)$.
Next we substitute back
$W = (\begin{smallmatrix} 1 & \mp d_2 \\ 0 & \pm 1 \end{smallmatrix}) U$.  
To aid in simplifying, note
\begin{equation*}
    W^t 
    \begin{pmatrix}  - \frac{1}{c^2 z_1'} & \frac{z_2}{cz_1'} \\ \frac{z_2'}{cz_1'} & \frac{-(z_2')^2}{z_1'} + i y_3'\end{pmatrix}  W
    % \begin{pmatrix} 1 & 0 \\ \mp d_1 & \pm 1 \end{pmatrix}
    % \begin{pmatrix}  - \frac{1}{c^2 z_1} & \frac{z_2}{cz_1} \\ \frac{z_2}{cz_1} & \frac{-z_2^2}{z_1} + i y_3\end{pmatrix}  
    % \begin{pmatrix} 1 & \mp d_2 \\ 0 & \pm 1 \end{pmatrix}
    % U
  =
  U^t \begin{pmatrix}  - \frac{1}{c^2 z_1'} & \pm \frac{z_2'+\frac{d_2}{c}}{cz_1'} \\ \pm \frac{z_2' + \frac{d_2}{c}}{cz_1'} & \frac{-(z_2' + \frac{d_2}{c})^2}{z_1'} + i y_3'\end{pmatrix} U.
\end{equation*}
Hence changing variables $x_2 \rightarrow x_2 - \frac{d_2}{c}$, 
and using the cyclic property of trace again,
we obtain
\begin{multline*}
    a_Q^{(1)}(T,Y,\phi)=\sum_{\pm}\sum_{c\geq1}\sumstar_{d_1\shortmod{c}}\sum_{d_2\shortmod{c}}\sumdag_{U,V} \delta_{f_3 = g_3}
    \int_{\mr^2}  e(- g_1 (x_1 - \frac{d_1}{c}) - g_2 (x_2 - \frac{d_2}{c} )
    \\
     e_c(a (f_1 \mp d_2 f_2 + d_2^2 f_3))
     e\Big(\mathrm{Tr}
     U Q U^t \mathrm{Re}\begin{pmatrix}  - \frac{1}{c^2 z_1} & \frac{\pm z_2}{cz_1} \\ \frac{\pm z_2}{cz_1} & \frac{-z_2^2}{z_1} \end{pmatrix}  \Big)
     \phi \circ \iota^{-1} \Big(U^t \mathrm{Im} \begin{pmatrix}  - \frac{1}{c^2 z_1} & \frac{\pm z_2}{cz_1} \\ \frac{\pm z_2}{cz_1} & \frac{-z_2^2}{z_1} + i y_3\end{pmatrix}  U \Big) dx_1 dx_2.
\end{multline*}
 Finally, using that $ad_1 \equiv 1 \pmod{c}$, 
we obtain the statement of Proposition \ref{prop.Rank1Poincare}.
\end{proof}

\begin{mylemma}\label{lem.Rank1IntegralBound}
    Let $\mathcal{I}_1=\mathcal{I}_1(Q,T,c,U,V)$ be as defined in \eqref{I1Def0}, with $\phi$ as in \eqref{eq:phiNdef}. Then 
    \begin{align}
    \label{eq:I1bound}
        \mathcal{I}_1\ll \frac{1}{c^2 (\det Y)^{1/2}}  \frac{1}{|u_3| + |u_4|},
    \end{align}
    and is supported on $c$ and matrices $U,V, Y$ such that 
    \begin{align}
    \label{eq:Rank1SummationConditions}
        c\ll (\det Y)^{-1/2}, \quad u_3^2 + u_4^2 \ll (\det Y)^{-1/2}, \quad \text{ and } \quad |v_1v_3| \ll N \frac{\sqrt{y_1 y_3}}{\sqrt{\det Y}}.
    \end{align}
\end{mylemma}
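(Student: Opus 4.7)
The plan is to mirror the rank~$2$ argument from Lemma~\ref{eq:IboundRank2}: apply the triangle inequality, compute the determinant of the complex symmetric matrix $\Sigma$ inside \eqref{I1Def0}, read off the support restrictions on $(x_1,x_2,c,U,V,Y)$ from the compact support of $\varphi$, and finally estimate the $(x_1,x_2)$-area of the resulting region. The combinatorial bookkeeping is heavier here because $\Sigma$ has a ``rank~$1$ plus a diagonal perturbation'' structure, so the analogue of the SVD step requires a little more care.

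First I would observe the clean determinant identity
\begin{equation*}
\det\bigl(\mathrm{Im}(\Sigma)\bigr)
 = \frac{\det Y}{c^{2}\,|z_1'|^{2}},
\qquad |z_1'|^{2}=x_1^{2}+(y_1')^{2},
\end{equation*}
which follows from a direct computation using $y_1'y_3'-(y_2')^{2}=\det(V^{t}YV)=\det Y$. Since $\phi\circ\iota^{-1}$ is supported where the argument equals $RR^{t}$ with $Nr_1,Nr_2\asymp 1$, this forces $\det(U^{t}\mathrm{Im}(\Sigma)U)\asymp N^{-2}$, which in turn localizes $|z_1'|\asymp N\sqrt{\det Y}/c$ and gives the $x_1$-range of length $\ll N\sqrt{\det Y}/c$.

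The second key ingredient is the ``completion-of-squares'' identity
\begin{equation*}
(U^{t}\mathrm{Im}(\Sigma)U)_{jj}
 = \frac{y_1'}{|z_1'|^{2}}\Bigl(u_{j'}\,(x_2-y_2'x_1/y_1')-\epsilon u_{j}/c\Bigr)^{2}
 + u_{j'}^{2}\,\frac{\det Y}{y_1'},
\end{equation*}
where $(j,j')=(1,3)$ or $(2,4)$, obtained by expanding the quadratic in $(x_1,x_2)$ and using $y_1'y_3'-(y_2')^{2}=\det Y$. Together with the diagonal support bounds $(U^{t}\mathrm{Im}(\Sigma)U)_{jj}\ll N^{-1}$, the constant term immediately yields $u_3^{2}+u_4^{2}\ll N^{-1}y_1'/\det Y$; combined with $y_1'\leq|z_1'|\ll N\sqrt{\det Y}/c$ this produces the support conditions $c\ll(\det Y)^{-1/2}$ and $u_3^{2}+u_4^{2}\ll(\det Y)^{-1/2}$ (using that $(u_3,u_4)$ is a primitive integer vector, so $|u_3|+|u_4|\geq 1$). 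The bound $|v_1v_3|\ll N\sqrt{y_1y_3}/\sqrt{\det Y}$ follows analogously by completing the square in the positive definite form $y_1'=v_1^{2}y_1+2v_1v_3y_2+v_3^{2}y_3$ to get $y_1'\geq v_i^{2}\det Y/y_{j}$ for $(i,j)=(1,3),(3,1)$, and multiplying.

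For the quantitative bound on $\mathcal{I}_1$ itself, I would change variables to $(x_1,t_2)$ with $t_2=x_2-y_2'x_1/y_1'$ (unit Jacobian), so that the completed-square identity becomes $(U^{t}\mathrm{Im}(\Sigma)U)_{jj}=\frac{y_1'}{|z_1'|^{2}}(u_{j'}t_2-\epsilon u_j/c)^{2}+u_{j'}^{2}\det Y/y_1'$. For each fixed $x_1$, the two diagonal constraints (from $j=1$ and $j=2$) force $t_2$ into two intervals centered at $\epsilon u_j/(cu_{j'})$ whose centers are separated by $|(u_1u_4-u_2u_3)/(cu_3u_4)|=1/(c|u_3u_4|)$. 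Compatibility of these constraints pins down $y_1'$ (up to constants) in terms of $(|u_3|+|u_4|)^{2}N\det Y$, which in turn controls the $t_2$-interval length; multiplying by the $x_1$-range of length $\ll N\sqrt{\det Y}/c$ yields the claimed $\mathcal{I}_1\ll\frac{1}{c^{2}(\det Y)^{1/2}(|u_3|+|u_4|)}$.

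The main obstacle will be Step~4, carefully tracking how the two joint constraints on $t_2$ intersect: one has to handle the degenerate cases $u_3=0$ or $u_4=0$ separately (where a single constraint suffices thanks to the forced value of $u_2$ or $u_1$), and when both are nonzero to verify that the compatibility of the two narrow $t_2$-intervals, of widths $\sqrt{L/(Ny_1')}/|u_j'|$ and centers $1/(c|u_3u_4|)$ apart, really does pin down $y_1'\asymp(|u_3|+|u_4|)^{2}N\det Y$ and produce the sharp $1/(|u_3|+|u_4|)$ factor (as opposed to a weaker $1/\max(|u_3|,|u_4|)$). This is the point at which the rank~$1$ analysis genuinely diverges from the rank~$2$ SVD argument and requires the refined structural identity above.
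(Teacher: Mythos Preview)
Your approach is essentially the same as the paper's, just packaged differently. The paper performs two successive changes of variable ($x_1\mapsto x_1y_1'$, then $x_2\mapsto -x_2/c+x_1y_2'$) to reduce the argument of $\phi\circ\iota^{-1}$ to the clean form
\[
\frac{1}{c^{2}(x_1^{2}+1)y_1'}\,U^{t}\Bigl(X_2^{t}X_2+\begin{pmatrix}0&0\\0&c^{2}\det Y\,(x_1^{2}+1)\end{pmatrix}\Bigr)U,
\qquad X_2=\begin{pmatrix}1&x_2\\0&0\end{pmatrix},
\]
and then reads off the support constraints from the determinant and from the two summands separately. Your completion-of-squares identity for the diagonal entries is exactly what these changes of variable produce, and your substitution $t_2=x_2-y_2'x_1/y_1'$ is (up to the harmless factor $-1/c$) the paper's second change of variable. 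Your treatment of the $|v_1v_3|$ bound via two completions of the square in $y_1'=v_1^{2}y_1+2v_1v_3y_2+v_3^{2}y_3$ is slightly cleaner than the paper's algebraic manipulation, but yields the same inequality.

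Your anticipated ``main obstacle'' in Step~4 is not actually an obstacle. You worry about extracting the sharp factor $1/(|u_3|+|u_4|)$ rather than $1/\max(|u_3|,|u_4|)$, but these are the same up to a factor of $2$. The paper simply observes that if (say) $u_3\neq 0$ then $|x_2+u_1/u_3|$ lies in an interval of length $\ll \tfrac{1}{|u_3|}\sqrt{N\det Y/y_1'}$, and likewise for $u_4$; taking whichever of $|u_3|,|u_4|$ is larger immediately gives the interval length $\ll \tfrac{1}{|u_3|+|u_4|}\sqrt{N\det Y/y_1'}$, with no need to analyze the intersection of the two intervals or to pin down $y_1'$ more precisely. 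Your compatibility argument (forcing $y_1'\asymp(|u_3|+|u_4|)^{2}N\det Y$) is correct and does give a genuine additional support restriction on $V$ relative to $U$, but it is not needed for the bound as stated, and the paper does not use it. The degenerate cases $u_3=0$ or $u_4=0$ are handled by the same observation, since at least one of $u_3,u_4$ is nonzero.
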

\begin{proof}
    By the triangle inequality,
    \begin{align*}
    |\mathcal{I}_1| \leq \mathcal{J}:=\int_{\mr^2} \phi\left(\iota^{-1}\left(\mathrm{Im} \ U^t\left(\begin{smallmatrix}
        -c^{-2}z_1'^{-1} & c^{-1}z_2'/z_1' \\ c^{-1}z_2'/z_1' & -z_2'^2/z_1'+z_3'
    \end{smallmatrix}\right) U\right) \right)dx_1dx_2.
\end{align*}
By a direct calculation,
$\mathrm{Im}(\begin{smallmatrix} - c^{-2} z_1' & (c z_1')^{-1} z_2' \\ (c z_1')^{-1} z_2' & -(z_1')^{-1} (z_2')^2 + z_3' \end{smallmatrix})$ equals
\begin{align*}
    % \mathrm{Im}\left(\begin{matrix}
    %     -\frac{1}{c^2z_1'}& \frac{z_2'}{cz_1'}\\ \frac{z_2'}{cz_1'} & -\frac{z_2'^2}{z_1'}+z_3'
    % \end{matrix}\right)=
    (x_1^2+y_1'^2)^{-1}
    \left(\begin{matrix}
        \frac{y_1'}{c^2} & \frac{x_1y_2'-x_2y_1'}{c}\\ \frac{x_1y_2'-x_2y_1'}{c} & x_2^2y_1'-2x_1x_2y_2'-y_1'y_2'^2+y_3'(x_1^2+y_1'^2)
    \end{matrix}\right).
\end{align*}
Note that $y_1'>0$ as $Y'=V^tYV$ is positive definite. Applying a change of variable $x_1\mapsto x_1y_1'$, 
and using $\det(Y') = y_1' y_3' - (y_2')^2 = \det(Y)$, 
we arrive at 
\begin{align*}
    \mathcal{J}
    % =&\ y_1'\int_{\mathbb{R}}\int_{\mathbb{R}}\phi\circ \iota^{-1}\left(\frac{1}{(x_1^2+1)y_1'}\ U^t\left(\begin{smallmatrix}
    %     c^{-2} & (x_1y_2'-x_2)/c \\ (x_1y_2'-x_2)/c & (x_2-x_1y_2')^2+(y_1'y_3'-y_2'^2)(x_1^2+1)
    % \end{smallmatrix}\right) U\right) dx_1dx_2\nonumber\\
    =&\ y_1'\int_{\mathbb{R}}\int_{\mathbb{R}}\phi\circ \iota^{-1}\left(\frac{1}{(x_1^2+1)y_1'}\ U^t\left(\begin{smallmatrix}
        c^{-2} & (x_1y_2'-x_2)/c \\ (x_1y_2'-x_2)/c & (x_2-x_1y_2')^2+\det(Y)(x_1^2+1)
    \end{smallmatrix}\right) U\right) dx_1dx_2.
\end{align*}
Changing variable $x_2\mapsto -x_2/c+x_1y_2'$, we obtain 
\begin{align*}
    \mathcal{J}=\frac{y_1'}{c}\int_{\mathbb{R}}\int_{\mathbb{R}}\phi\circ \iota^{-1}\left(\frac{1}{c^2(x_1^2+1)y_1'}\ U^t\left(\begin{smallmatrix}
        1 & x_2 \\ x_2 & x_2^2+c^2\det(Y)(x_1^2+1)
    \end{smallmatrix}\right) U\right) dx_1dx_2.
\end{align*}
Note that with $X_2 := (\begin{smallmatrix} 1 & x_2 \\ 0 & 0 \end{smallmatrix})$, we have
\begin{equation*}
    \begin{pmatrix} 1 & x_2 \\ x_2 & x_2^2 + c^2 \det(Y) (x_1^2 + 1) \end{pmatrix}
    = X_2^t X_2 
    %\begin{pmatrix} 1 & 0 \\ x_2 & 0 \end{pmatrix} \begin{pmatrix} 1 & x_2 \\ 0 & 0 \end{pmatrix} 
    + \begin{pmatrix} 0 & 0 \\ 0 & c^2 \det(Y) (x_1^2 + 1) \end{pmatrix},
\end{equation*}
which is the sum of two positive semidefinite matrices (by the way, this calculation shows why $\mathcal{J}$ only depends on the bottom row of $U$, which is a nice sanity check).  

Recall that $\phi \circ \iota^{-1}(X)$ is supported on matrices $X$ with $\|X\|_{\infty} \ll N^{-1}$, and also $\det(X) \asymp N^{-2}$.  Computing the determinant, we obtain the restriction
$    \frac{  \det(Y)  }{c^2 (x_1^2 + 1) (y_1')^2} \asymp N^{-2}$,
and in particular, 
\begin{equation}
\label{eq:candx1bounds}
    c \ll \frac{N \sqrt{\det(Y)}}{y_1'},
    \qquad \text{and}
    \qquad |x_1| \ll \frac{N \sqrt{\det(Y)}}{c y_1'}.
\end{equation}
Since
\begin{equation*}
\frac{1}{c^2 (x_1^2 + 1) y_1'} U^t \begin{pmatrix} 0 & 0 \\ 0 & c^2 \det(Y) (x_1^2 + 1) \end{pmatrix} U
= \begin{pmatrix}
u_3^2 \frac{\det(Y)}{y_1'} & * \\ * & u_4^2 \frac{\det(Y)}{y_1'}
\end{pmatrix},
\end{equation*}
we obtain the restrictions
\begin{equation}
\label{eq:u3u4bounds}
    u_3^2 + u_4^2 \ll \frac{y_1'}{N \det(Y)}.
\end{equation}
Note that since $1 \leq u_3^2 + u_4^2$, we need $y_1' \gg N \det(Y)$ for the support to be nonempty.  We can then update the upper bound on $c$ in \eqref{eq:candx1bounds} to state $c \ll (\det Y)^{-1/2}$, consistent with \eqref{eq:Rank1SummationConditions}.  Also, since $c \geq 1$, we obtain the restriction $y_1' \ll N (\det Y)^{1/2}$.  Hence $y_1'$ may be localized by
\begin{equation}
\label{eq:y1'restrictions}
    N \det Y \ll y_1' \ll N (\det Y)^{1/2}.
\end{equation}
Applying this upper bound on $y_1'$ back into \eqref{eq:u3u4bounds} gives a truncation on $u_3$ and $u_4$ consistent with \eqref{eq:Rank1SummationConditions}.

By considering the upper-left and lower-right entries of $(U X_2)^t (UX_2)$, we obtain that
\begin{equation*}
    (u_1 + x_2 u_3)^2 +  
(u_2 + x_2 u_4)^2 
    \ll \frac{c^2 (x_1^2 + 1) y_1'}{N} \asymp \frac{N \det(Y)}{y_1'}.
\end{equation*}
If $u_3 \neq 0$, then this implies $|x_2 + \frac{u_1}{u_3}| \ll \frac{\sqrt{N \det(Y)}}{|u_3| \sqrt{y_1'}}$, and a similar bound holds if $u_4 \neq 0$.  Thus $x_2$ may be restricted to an interval of length
\begin{equation}
\label{eq:x2bound}
    \ll \frac{\sqrt{N \det(Y)}}{\sqrt{y_1'}} \frac{1}{|u_3| + |u_4|}.
\end{equation}
Combining the restrictions on $x_1$ and $x_2$ from \eqref{eq:candx1bounds} and \eqref{eq:x2bound}, 
and using \eqref{eq:y1'restrictions},
we deduce \eqref{eq:I1bound}.

Finally, we need to obtain the restriction on $v_1$ and $v_2$ from \eqref{eq:Rank1SummationConditions}.  Recall that $Y' = V^t Y V$ and $V = (\begin{smallmatrix} v_1 & * \\ v_3 & * \end{smallmatrix}) \in \GL_2(\mz)$, so by direct calculation,
\begin{align*}
    y_1'=&\ v_1^2y_1+2v_1v_3y_2+v_3^2y_3.
\end{align*}
Note
\begin{align*}
y_1' = (|v_1| \sqrt{y_1} - |v_3| \sqrt{y_3})^2 + 2 |v_1 v_3| (\sqrt{y_1 y_3} - |y_2|)
\geq 2|v_1 v_2| \frac{\det(Y)}{\sqrt{y_1 y_3} + |y_2|}.
\end{align*}
Using the upper bound on $y_1'$ from \eqref{eq:y1'restrictions}, we then obtain $|v_1 v_3| \ll  N \frac{\sqrt{y_1 y_3}}{\sqrt{\det Y}}$, which is the final claim in the lemma.
% Therefore, $|v_1 v_3| \leq \frac{y_1' \sqrt{y_1 y_3}}{\det(Y)} \ll N \frac{\sqrt{y_1 y_3}}{\sqrt{\det Y}}$.
% If $v_1v_3y_2\geq0$, then $y_1'\gg v_1^2+v_3^2\gg |v_1v_3|$ as $y_1,y_3>0$ and $y_1,y_3\gg 1$. On the other hand if $v_1v_3y_2<0$, write $\epsilon=y_2/|y_2|$. Then $\epsilon v_1v_3<0$ and
% \begin{align}
%     y_1'=&\ (v_1\sqrt{y_1}+\epsilon v_3\sqrt{y_3})^2-2\epsilon v_1v_3(\sqrt{y_1y_3}-\epsilon y_2)\nonumber\\
%     =&\ (v_1\sqrt{y_1}+\epsilon v_3\sqrt{y_3})^2+2|v_1v_3y_2|\left(\sqrt{1+\frac{\det(Y)}{|y_2|}}+\epsilon\right)\gg |v_1v_3|\det(Y).
% \end{align}
% With $\det(Y)\asymp 1$ and $y_1'\asymp N$, we conclude the bound \begin{align}\label{Rank1JBound}
%     \mathcal{J}\ll&\ N\delta(c, \det(Y)\asymp 1, u_3,u_4\ll 1, v_1v_3\ll N).
% \end{align}
\end{proof}

\begin{mytheo}\label{thm.Rank1Bound}
Let $\phi$ be as in \eqref{eq:phiNdef}. Then
\begin{equation}
\label{eq:aQ1boundFinal}
|a_Q^{(1)}(T,Y, \phi)| \ll \frac{1}{(\det Y)^{3/2}} \Big(1 + \frac{N \sqrt{y_1 y_3}}{\sqrt{\det(Y)}} \Big)^{1+\varepsilon}.
\end{equation}
\end{mytheo}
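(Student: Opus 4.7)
The plan is to combine Proposition \ref{prop.Rank1Poincare} with the integral bound in Lemma \ref{lem.Rank1IntegralBound} and a Weil-type estimate for the Kloosterman sum $K_1$, followed by careful combinatorial counting of $(U,V)$-pairs satisfying the delta constraint $\delta(f_3=g_3)$. Substituting $|\mathcal{I}_1|\ll \frac{1}{c^2(\det Y)^{1/2}(|u_3|+|u_4|)}$ and the support restrictions $c\ll(\det Y)^{-1/2}$, $u_3^2+u_4^2\ll(\det Y)^{-1/2}$, and $|v_1v_3|\ll X_V:=N\sqrt{y_1y_3}/\sqrt{\det Y}$ reduces the problem to a weighted sum over $c$, $U$, and $V$. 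Crucially, the $c$-support forces $\det Y\ll 1$ throughout (since $c\geq 1$), which later allows us to use $(\det Y)^{-3/4}\le(\det Y)^{-3/2}$.

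For $K_1$, I would view it as an iterated sum, first in $d_2$ and then in $d_1^*$. The $d_2$-sum is a quadratic Gauss sum with leading coefficient $\overline{d_1}f_3$. When $f_3\neq 0$, completing the square gives a Gauss sum of size $O((c\gcd(c,f_3))^{1/2})$, and the remaining $d_1^*$-sum becomes a (possibly twisted) Kloosterman sum of modulus $c$; Weil's bound then yields
\[ |K_1|\ll c^{3/2+\varepsilon}\gcd(c,f_3)^{1/2}\gcd(c,g_2)^{1/2}. \]
In the degenerate case $f_3=0$ (which by the delta forces $g_3=0$), the $d_2$-sum collapses to $c\cdot\delta(g_2\equiv\pm\overline{d_1}f_2\shortmod{c})$ and the $d_1$-sum is a Kloosterman sum of size $O(c^{1/2+\varepsilon})$. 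Either way, executing the $c$-sum produces
\[ \sum_{c\ll(\det Y)^{-1/2}}\frac{|K_1|}{c^2}\ll (\det Y)^{-1/4+\varepsilon}, \]
after absorbing gcd factors via standard divisor arguments and using $|f_3|\ll\|Q\|(|u_3|+|u_4|)^2$ to bound $|f_3|^\varepsilon$.

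Next I would sum over $U$ and $V$ subject to $f_3(U)=g_3(V)$. Hyperbolic counting yields $\ll X_V^{1+\varepsilon}$ coprime pairs $(v_1,v_3)$ with $|v_1v_3|\leq X_V$. For each such $V$ with $g_3(V)\neq 0$, Lemma \ref{lem.quadraticformDivisor} bounds the number of $(u_3,u_4)$ with $f_3(U)=g_3(V)$ by $O((\det Y)^{-\varepsilon})$, each contributing at most $1/(|u_3|+|u_4|)\leq 1$. The degenerate case $g_3(V)=0$ restricts $V$ to $O(1)$ primitive points (lying along rational lines); for each such $V$, the condition $f_3(U)=0$ similarly restricts $U$, and the weighted sum $\sum_U(|u_3|+|u_4|)^{-1}\ll(\det Y)^{-1/4+\varepsilon}$ is absorbed using $\det Y\ll 1$. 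Assembling the estimates yields
\[ |a_Q^{(1)}|\ll (\det Y)^{-3/4+\varepsilon}\bigl(X_V^{1+\varepsilon}+(\det Y)^{-1/4+\varepsilon}\bigr)\ll \frac{1}{(\det Y)^{3/2}}\Big(1+\frac{N\sqrt{y_1y_3}}{\sqrt{\det Y}}\Big)^{1+\varepsilon}. \]

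The main obstacle will be the careful bookkeeping of the Kloosterman sum $K_1$ in regimes where $\gcd(c,f_3)$ or $\gcd(c,g_2)$ is large, together with the delicate treatment of the degenerate cases where the quadratic forms $f_3$ or $g_3$ vanish or factor rationally, while still preserving the nearly-sharp bound $X_V^{1+\varepsilon}$ for the weighted count of $(U,V)$-pairs via Lemma \ref{lem.quadraticformDivisor}.
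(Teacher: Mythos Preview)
Your approach is considerably more elaborate than the paper's, and while it would likely succeed with extra care, it is not needed for the stated bound. The paper's proof is essentially two lines: it bounds $K_1$ \emph{trivially} by $|K_1|\le c^2$ (there are at most $c^2$ terms in the $d_1,d_2$-sum) and simply \emph{discards} the constraint $\delta(f_3=g_3)$, summing over $c$, $U$, and $V$ independently using the support conditions from Lemma~\ref{lem.Rank1IntegralBound}. The $c^2$ from the trivial bound cancels the $c^{-2}$ in $\mathcal{I}_1$, the $c$-sum has length $\ll(\det Y)^{-1/2}$, the $(u_3,u_4)$-sum weighted by $(|u_3|+|u_4|)^{-1}$ contributes $\ll(\det Y)^{-1/4}$, and hyperbolic counting of $(v_1,v_3)$ gives $(1+X_V)^{1+\varepsilon}$; this already yields $(\det Y)^{-5/4}(1+X_V)^{1+\varepsilon}$, which is absorbed into \eqref{eq:aQ1boundFinal} since $\det Y\ll 1$. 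No Weil bound, no Lemma~\ref{lem.quadraticformDivisor}, no coupling of $U$ and $V$.

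Your route via a Weil-type estimate for $K_1$ and the coupling $f_3(U)=g_3(V)$ through Lemma~\ref{lem.quadraticformDivisor} would in principle give a sharper $\det Y$ exponent, which is harmless but unnecessary here since the rank-$1$ contribution to \eqref{eq:FGPQinnerproductAbsoluteValue} ends up only of size $N$, far below the rank-$2$ term $N^{5/2}$. Be aware, however, that your degenerate-case analysis has genuine gaps as written: Lemma~\ref{lem.quadraticformDivisor} requires the underlying form to be nondegenerate, but Theorem~\ref{thm:mainthm} only assumes $Q\neq 0$, so $\det Q=0$ is allowed; and $T$ is an arbitrary Fourier mode, so $\det T=0$ (or even $T=0$) can occur, in which case the zero set of $g_3(V)$ is a full rational line of primitive points, not $O(1)$ points. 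These cases would require separate treatment. The paper sidesteps all of this by never invoking the $\delta$-constraint in the first place.
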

\begin{proof}
    By Proposition \ref{prop.Rank1Poincare} and Lemma \ref{lem.Rank1IntegralBound}, trivially bounding the character sum $K_1$ gives 
    \begin{align*}
        |a_Q^{(1)}(T,Y, \phi)|
        \ll \sum_{c, u_3^3, u_4^2 \ll (\det Y)^{-1/2}}
        \sum_{|v_3 v_4| \ll N \frac{\sqrt{y_1 y_3}}{\sqrt{\det Y}}} 
        c^2 \frac{1}{c^2 (\det Y)^{1/2}} \frac{1}{|u_3| + |u_4|}.
    \end{align*}
Note that if $v_3 = 0$ then $v_4 = \pm 1$ and vice-versa.
 This bound simplifies to give \eqref{eq:aQ1boundFinal}.   
\end{proof}

\subsection{Rank \texorpdfstring{$0$}{0}}
Finally, we have the contribution from $C=0$.
\begin{myprop}
\label{prop.Rank0Poincare}
    We have that 
    \begin{align}
    \label{eq:aQ0formula}
        a_Q^{(0)}(T,Y,\phi)=\sum_{\substack{U\in\mathrm{GL}_2(\mz)\\ U^t QU=T}}\phi(\iota^{-1} (UYU^t) ).
    \end{align}
    Suppose $Y$ is given by \eqref{eq:YvsAcoordinates} with $r_1, r_2 > 0$ and $u \ll 1$, and $\phi$ is as in \eqref{eq:phiNdef}.  Then 
    \begin{align}
    \label{eq:aQ0bound}
        a_Q^{(0)}(T,Y,\phi)\ll N^{-1}(r_1^{-1}+r_2^{-1})^{1+\varepsilon}.
    \end{align}
\end{myprop}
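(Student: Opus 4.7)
The plan is to first derive the formula \eqref{eq:aQ0formula} by specializing the rank decomposition in \eqref{eq:aQformulaWithS} to $C = 0$. Any such $\gamma \in \Gamma$ takes the form $(\begin{smallmatrix} A & B \\ 0 & A^{-t} \end{smallmatrix})$ with $A \in \GL_2(\mz)$ and $BA^t \in \Lambda'$; since $n(BA^t) \in \Gamma_\infty$, we have $\gamma = n(BA^t)\, m(A)$, so $\Gamma_\infty$-cosets of rank-$0$ elements are parameterized bijectively by $A \in \GL_2(\mz)$ via the representatives $m(A)$. Because $m(A)(Z) = A Z A^t$, the integrand becomes $e(\mathrm{Tr}(A^t Q A \, X))\, \phi(\iota^{-1}(AYA^t))\, e(-\mathrm{Tr}(TX))$, and the $X$-integral over $(\mr/\mz)^3$ collapses to the Kronecker delta $\delta(A^t Q A = T)$, yielding \eqref{eq:aQ0formula}.

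For the bound I will apply the triangle inequality together with $\|\phi\|_{\infty} \ll 1$ (from the Schwartz-class assumption on $\varphi$) to reduce to counting the number $\mathcal{N}_U$ of $U \in \GL_2(\mz)$ with $\iota^{-1}(UYU^t)$ in the support of $\phi$; the arithmetic constraint $U^t Q U = T$ is dropped for an upper bound. The support condition forces $\|UYU^t\|_\infty \ll N^{-1}$. Writing $U = (\begin{smallmatrix} a & b \\ c & d \end{smallmatrix})$ and setting $f(x,y) := x^2 r_1 + (xu + y)^2 r_2$, which is a positive definite quadratic form of discriminant $r_1 r_2$, the diagonal entries of $UYU^t$ are $f(a,b)$ and $f(c,d)$; thus one must count $(a, b, c, d) \in \mz^4$ satisfying $f(a,b),\ f(c,d) \ll N^{-1}$ together with $ad - bc = \pm 1$.

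The first successive minimum of $f$ satisfies $\lambda_1^2 \asymp \min(r_1, r_2) =: r_{\min}$, as seen by testing $(1,0)$ and $(0,1)$. The key step is to use the determinant-one relation efficiently: for each primitive $(c,d)$ with $f(c,d) \ll N^{-1}$, the admissible $(a, b)$ lie on a coset $(a_0, b_0) + k \cdot (c,d)$ with $k \in \mz$, and since $f(a_0 + kc,\ b_0 + kd)$ is a quadratic polynomial in $k$ with leading coefficient $f(c, d)$, the number of valid $k$ is $\ll 1 + N^{-1/2}/\sqrt{f(c,d)}$. Combining this with the Minkowski-type bound $|\{v \in \mz^2 : f(v) \leq T\}| \ll 1 + \sqrt{T}/\lambda_1 + T/\sqrt{r_1 r_2}$ and a dyadic decomposition of $f(c, d)$ over $[\lambda_1^2, N^{-1}]$ will yield $\mathcal{N}_U \ll N^{\varepsilon} \bigl(1 + (N r_{\min})^{-1/2} + (N \sqrt{r_1 r_2})^{-1}\bigr)$.

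Finally, since the support is empty unless $r_{\min} \ll N^{-1}$, I may assume this inequality; then $\sqrt{r_1 r_2} \geq r_{\min}$ and $r_{\min} \leq 1$ force each of the three terms to be $\ll N^{-1}/r_{\min}$, and the $N^{\varepsilon}$ factor is absorbed into $r_{\min}^{-\varepsilon}$ (valid since $r_{\min} \ll N^{-1}$ gives $r_{\min}^{-\varepsilon} \gg N^{\varepsilon}$). Noting $r_1^{-1} + r_2^{-1} \asymp r_{\min}^{-1}$ then delivers \eqref{eq:aQ0bound}. The main obstacle will be the lattice counting in the third step: a naive bound of the form $\mathcal{N}_U \leq M^2$ (treating the two rows of $U$ independently, where $M$ counts admissible rows) is insufficient when $r_1, r_2$ are only mildly small, so the relation $ad - bc = \pm 1$ must be used to link the two rows rather than bounding them separately.
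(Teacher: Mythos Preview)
Your derivation of \eqref{eq:aQ0formula} is essentially the paper's argument (the paper simply cites Kitaoka's Lemma~3 for the coset parameterization you spell out), and your bound is correct, but the route you take for \eqref{eq:aQ0bound} is considerably more elaborate than the paper's.

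The paper proceeds directly: from $\|UYU^t\|_\infty \ll N^{-1}$ and the explicit formula \eqref{eq:CRRCformula} for the diagonal entries, one reads off $u_1^2 r_1,\ u_3^2 r_1 \ll N^{-1}$ and $(u_1 u + u_2)^2 r_2,\ (u_3 u + u_4)^2 r_2 \ll N^{-1}$; since $u \ll 1$, this forces $\|U\|_\infty \ll N^{-1/2}(r_1^{-1/2}+r_2^{-1/2}) =: X$. One then invokes the standard fact $|\{U \in \GL_2(\mz) : \|U\|_\infty \leq X\}| \ll X^{2+\varepsilon}$ (already used in the proof of Corollary~\ref{coro:RankinSelbergVariant}), which gives the claimed bound immediately. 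The determinant relation $ad-bc=\pm 1$ is of course used here, but only through this packaged count, not via the coset-line argument you carry out.

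Your Minkowski/dyadic approach is sound, though one point deserves tightening: the assertion $\lambda_1^2 \asymp r_{\min}$ ``as seen by testing $(1,0)$ and $(0,1)$'' is not quite right, since $f(1,0)=r_1+u^2 r_2$ need not be $\ll r_1$ when $r_1 \ll r_2$ and $u\neq 0$. What you actually need (and what holds) is the \emph{lower} bound $\lambda_1^2 \geq r_{\min}$, which follows because $f(x,y)\geq r_1 x^2 \geq r_1$ whenever $x\neq 0$, and $f(0,y)=r_2 y^2 \geq r_2$ otherwise. With that correction your chain of inequalities goes through. Your remark that the naive row-independent bound $M^2$ fails is correct; the paper's point is just that there is a shorter way to exploit unimodularity than the coset-line decomposition.
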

\begin{proof}
    For $C=0$, Kitaoka's Lemma 3 (in \S 2) gives (switching $U$ with $U^t$ compared to Kitaoka)
    \begin{align*}
     a_Q^{(0)}(T,Y,\phi)
     %=&\ \sum_{U\in\mathrm{GL}_2(\mz)}\int_{0\leq x_1,x_2,x_3\leq1}e(\mathrm{Tr}(Q\ \mathrm{Re} (\begin{smallmatrix}
    %     U & \\ & U^{-t}
    % \end{smallmatrix})( Z)-TX))\phi(\iota^{-1} \mathrm{Im} (\begin{smallmatrix}
    %     U & \\ & U^{-t}
    % \end{smallmatrix})(Z) )dX\nonumber\\
    =&\ \sum_{U\in\mathrm{GL}_2(\mz)}\int_{0\leq x_1,x_2,x_3\leq1}e(\mathrm{Tr}(QUXU^t-TX))\phi(\iota^{-1} (UYU^t) )dX. \nonumber
    % \\
    % =&\ \sum_{U\in\mathrm{GL}_2(\mz)}\phi(\iota^{-1} (UYU^t) )\int_{0\leq x_1,x_2,x_3\leq1}e(\mathrm{Tr}(U^tQU-T)X)dX.
\end{align*}
Using $\mathrm{Tr}(QUXU^t) =\mathrm{Tr}(U^t Q U X)$
and integrating over $x_1,x_2,x_3$, we obtain \eqref{eq:aQ0formula}.
% \begin{align}
%     a_Q^{(0)}(T,Y,\phi)=\sum_{\substack{U\in\mathrm{GL}_2(\mz)\\ UQU^t=T}}\phi(\iota^{-1} U^tYU )=\sum_{\substack{U\in\mathrm{GL}_2(\mz)\\ UQU^t=T}}\phi(U^t\iota^{-1} (Y) ).
% \end{align}

It remains to show the upper bound \eqref{eq:aQ0bound}. 
Recall that $\phi \circ \iota^{-1}(M)$ is supported on matrices $M$ such that $\| M  \|_{\infty} \ll N^{-1}$.
Write $U=(\begin{smallmatrix}
    u_1 & u_2 \\ u_3 & u_4
\end{smallmatrix})$, 
and
$Y=RR^t$ as in \eqref{eq:YvsAcoordinates}.
By a simple modification of Lemma \ref{lemma:CsupportCondition}, the sum in \eqref{eq:aQ0formula} may be restricted to $U \in \GL_2(\mz)$ such that
\begin{gather*}
u_1^2 r_1 \ll \frac{1}{N}, 
\qquad (u_1 u + u_2)^2 r_2 \ll \frac{1}{N}, \\
u_3^2 r_1 \ll \frac{1}{N}, 
\qquad (u_3 u + u_4)^2 r_2 \ll \frac{1}{N}.
\end{gather*}
In particular, $|u_1|, |u_3| \ll N^{-1/2} r_1^{-1/2}$ and since $u \ll 1$ we then deduce that $|u_2|, |u_4| \ll N^{-1/2} (r_1^{-1/2} + r_2^{-1/2})$.  Hence the sum over $U$ may be restricted to matrices $U \in \GL_2(\mz)$ such that $\|U \|_{\infty} \ll N^{-1/2} (r_1^{-1/2} + r_2^{-1/2})$.  The total number of such $U$'s is $\ll N^{-1} (r_1^{-1} + r_2^{-1})^{1+\varepsilon}$.
% Then the support of $\phi$ implies that \begin{align}
%     \left(\begin{smallmatrix}
%         \sqrt{r_1}u_1 & \sqrt{r_2}(u_1u+u_3)\\ \sqrt{r_1}u_2 & \sqrt{r_2}(u_2u+u_4)
%     \end{smallmatrix}\right)=\left(\begin{smallmatrix}
%         O(1/N) & O(1/N) \\ O(1/N) & O(1/N)
%     \end{smallmatrix}\right).
% \end{align}
% Together with $u\ll1$, we have $|u_j|\ll N^{-1}(r_1^{-1}+r_2^{-1})$ for all $j=1,2,3,4$. Noticing that no such $U\in \mathrm{GL}_2(\mathbb{Z})$ exist unless $N^{-1}(r_1^{-1}+r_2^{-1})\gg1$, we obtain the desired bound.
\end{proof}

\section{Proof of Theorem \ref{thm:mainthm}}
We return to \eqref{eq:FGPQinnerproduct}, set $\sigma_1 = 350$ and $\sigma_2 = 200$ (which is in the region \eqref{eq:minimalEisensteinSeriesAbsoluteConvergence}), and apply the triangle inequality.  Integrating trivially over the $\nu_1$ and $\nu_2$ variables gives
\begin{multline}
\label{eq:FGPQinnerproductAbsoluteValue}
|\langle F P_Q(\cdot, \phi), G \rangle|
\ll
\sum_{T + M_1= M_2} |a_F(M_1) \overline{a_G}(M_2) |
\int_{\substack{
r_1, r_2 \in \mr_{>0} \\ u \ll 1}}
\\
|a_{Q}(T, Y, \phi)|
\exp(- 2 \pi \mathrm{Tr}((M_1+M_2)Y))
r_1^{174 + k}
r_2^{23.5 + k}
du \frac{dr_1 dr_2}{r_1 r_2}.
\end{multline}
Note that if $M_1 = (\begin{smallmatrix} m_1 & m_2/2 \\ m_2/2 & m_3 \end{smallmatrix})$ and $M_2 = (\begin{smallmatrix} n_1 & n_2/2 \\ n_2/2 & n_3 \end{smallmatrix})$,  then $\mathrm{Tr}(M_1 Y) = m_1 (r_1 + r_2 u^2) + m_2 r_2 u + m_3 r_2$, and similarly for $\mathrm{Tr}(M_2 Y)$.

{\bf Rank $2$.}
Consider the contribution to \eqref{eq:FGPQinnerproductAbsoluteValue} from $|a_Q^{(2)}(T,Y, \phi)|$, where the bound is given by Theorem \ref{thm.Rank2Bound}.  With this substitution, this contribution is bounded by
\begin{equation*}
N^{5/2+\varepsilon}
    \sum_{M_1, M_2} |a_F(M_1) \overline{a_G}(M_2) |
    (1+|\det(M_2-M_1)|)^{\varepsilon} 
    \cdot I(M_1, M_2),
\end{equation*}
where
\begin{multline*}
    I(M_1, M_2) = 
    \int_{\substack{
r_1, r_2 \in \mr_{>0} \\ u \ll 1}}
\exp(-r_2 (m_1 u^2 + m_2 u + m_3) + (n_1u^2 + n_2 u + n_3))
\\
\exp(- (m_1 + n_1) r_1)
 (\frac{1}{r_1} + \frac{1}{r_2})^{1+\varepsilon}
r_1^{174.75 + k}
r_2^{25.25 + k}
du \frac{dr_1 dr_2}{r_1 r_2}.
\end{multline*}
We claim that
\begin{equation}
\label{eq:IboundClaim}
    I(M_1, M_2) \ll \frac{1}{\|M_1 \|_{\infty}^4 \|M_2\|_{\infty}^4} (\det M_1 \cdot \det M_2)^{-\frac{k}{2}-5}.
\end{equation}
Using \eqref{eq:IboundClaim},  $|\det(M_2 - M_1)|^{\varepsilon} \ll \|M_1\|_{\infty}^{\varepsilon} \cdot \|M_2 \|_{\infty}^{\varepsilon}$, and applying Corollary \ref{coro:RankinSelbergVariant}, will then show that the contribution of $a_Q^{(2)}$ to \eqref{eq:FGPQinnerproductAbsoluteValue} is $\ll N^{5/2+\varepsilon}$, consistent with Theorem \ref{thm:mainthm}.

Now we show the claim \eqref{eq:IboundClaim}.  It is easy to see that
\begin{equation}
\label{eq:Somer1Integral}
    \int_{0}^{\infty} \exp(-(m_1 + n_1) r_1) 
    r_1^{174.75 + k} 
    (\frac{1}{r_1} + \frac{1}{r_2})^{1+\varepsilon}
    \frac{dr_1}{r_1}
    \ll (1 + r_2^{-1-\varepsilon}) (m_1 + n_1)^{-k-173}.
\end{equation}
Using $(x+y)^{-1} \leq x^{-1/2} y^{-1/2}$, valid for $x,y> 0$, then \eqref{eq:Somer1Integral} is $\ll (1+r_2^{-1-\varepsilon}) (m_1 n_1)^{-\frac{k}{2} - 80}$.  Similarly estimating the $r_2$-integral, we obtain the bound
\begin{equation}
\label{eq:IboundMiddleSteps}
    I \ll  
    \sum_{j \in \{0,1\}}
    \int_{u \ll 1} \frac{(m_1 n_1)^{-\frac{k}{2}-80} du}{(m_1 u^2 + m_2 u + m_3)^{\frac{k}{2} + 12+\frac{1+4j(1 + \varepsilon)}{8}} (n_1 u^2 + n_2 u + n_3)^{\frac{k}{2} + 12+\frac{1+4j(1+\varepsilon)}{8}}}.
\end{equation}
Next we claim that
\begin{equation}
\label{eq:quadraticformbound}
    (m_1 u^2 + m_2 u + m_3)^{-1} 
    \ll
    \begin{cases}
        \frac{m_1}{\det(M_1)}, \qquad &\text{always} \\
        m_3^{-1}, \qquad &\text{if } m_1 \leq \delta m_3, \text{ $\delta > 0$ small}.
    \end{cases}
\end{equation}
By completing the square, it is easy to see that
$m_1 u^2 + m_2 u + m_3 \geq \frac{\det(M_1)}{m_1}$, for all $u \in \mr$, giving the first bound.  Next suppose 
$m_1 \leq \delta m_3$ with $\delta > 0$ small compared to the implied constant in $u \ll 1$.  We have 
$$m_1 u^2 + m_2 u + m_3 = m_3\Big(1 + \frac{u m_2}{2m_3}\Big)^2 + \frac{u^2}{m_3} \det(M_1) \geq m_3 ( 1 + \frac{u m_2}{2m_3})^2.$$
Note that $m_2^2 \leq 4 m_1 m_3 \leq 4 \delta m_3^2$, so $\frac{|m_2|}{m_3} \leq 2 \sqrt{\delta}$, which implies that $1+\frac{u m_2}{2m_3} \gg 1$ for $\delta > 0$ sufficiently small.  Hence $m_1 u^2 + m_2 u + m_3 \gg m_3 \asymp \|M_1 \|_{\infty}$.

Now we apply \eqref{eq:quadraticformbound} to \eqref{eq:IboundMiddleSteps} as follows.  If $m_1 \geq \delta m_3$ for some fixed but arbitrarily small $\delta > 0$, then $\|M_1 \|_{\infty} \asymp m_1$, since $\det(M_1) \geq 0$ implies
$m_2^2 \leq 4m_1 m_3 \leq 4 \delta^{-1} m_1^2$.  We then use the first bound in \eqref{eq:quadraticformbound}, giving
\begin{equation*}
    \frac{m_1^{-\frac{k}{2}-80}}{ (m_1 u^2 + m_2 u + m_3)^{\frac{k}{2} + 12+\frac{1+4j(1+\varepsilon)}{8}}} 
    \ll (\det M_1)^{-\frac{k}{2}-12} m_1^{-50} \ll (\det M_1)^{-\frac{k}{2}-12} \| M_1 \|_{\infty}^{-50}.
\end{equation*}
If $m_1 \leq \delta m_3$ then $\|M_1 \|_{\infty} \asymp m_3$. We borrow $m_1 u^2 + m_2 u + m_3)^{-4}$ and use the second bound in \eqref{eq:quadraticformbound} for this part, and use the first bound in \eqref{eq:quadraticformbound} for the remaining factors, giving
\begin{equation*}
     \frac{m_1^{-\frac{k}{2}-80}}{(m_1 u^2 + m_2 u + m_3)^{\frac{k}{2} + 12+\frac{1+4j(1+\varepsilon)}{8}}}
    \ll \frac{1}{m_3^4} \frac{1}{m_{1}^{60}} (\det M_1)^{-\frac{k}{2}-8} 
    \ll (\det{M_1})^{-\frac{k}{2}-8} \|M_1 \|_{\infty}^{-4} .
\end{equation*}
Combining the two cases, and applying the same bounds for $M_2$ as just derived for $M_1$, we then obtain \eqref{eq:IboundClaim}.

{\bf Rank $1$.}  
Consider the contribution to \eqref{eq:FGPQinnerproductAbsoluteValue} from $|a_Q^{(1)}(T,Y, \phi)|$, where the bound is given by Theorem \ref{thm.Rank1Bound}.
Here the bound has a similar shape to the rank $2$ case, except the power of $N$ is only $N^{1+\varepsilon}$ instead of $N^{5/2+\varepsilon}$, and the exponents in terms of the $r_i$ variables are slightly adjusted.
It is not hard to check that the previous method shows that the contribution of the rank $1$ terms to \eqref{eq:FGPQinnerproductAbsoluteValue} is $\ll N$.

{\bf Rank $0$.}
This case is even easier than the rank $1$ terms, and using Proposition \ref{prop.Rank0Poincare}, the contribution to $\eqref{eq:FGPQinnerproductAbsoluteValue}$ is $\ll N^{-1}$.

\bibliographystyle{alpha}
\bibliography{refs}	

\end{document}